\title{Optimal eigenvalues on a metric graph with densities}
\newcommand{\arxiv}[1]{\href{http://arxiv.org/abs/#1}{arXiv:#1}}
\newcommand*{\mailto}[1]{\href{mailto:#1}{\nolinkurl{#1}}}
\newcommand{\msc}[1]{\href{http://www.ams.org/msc/msc2020.html?t=&s=#1}{#1}}
\theoremstyle{plain}
\newtheorem{thm}{Theorem}[section]
\newtheorem{lem}[thm]{Lemma}
\newtheorem{cor}[thm]{Corollary}
\newtheorem{prop}[thm]{Proposition}
 \newtheorem{conjecture}{Conjecture}
\theoremstyle{definition}
\newtheorem{defn}[thm]{Definition}
\theoremstyle{remark}
\newtheorem{rmk}[thm]{Remark}
\newtheorem{ex}[thm]{Example}
\newcommand\cI{{\mathcal{I}}}
\numberwithin{equation}{section}
\newcommand{\N}{\mathbb{N}}     
\newcommand{\R}{\mathbb{R}}     
\newcommand{\cM}{\mathcal{M}}  
\newcommand{\cN}{\mathcal{N}}
\newcommand{\cP}{\mathcal{P}}   
\newcommand{\one}{\mathbbm{1}}
\newcommand{\supp}{\operatorname{supp}}
\newcommand{\dom}{\operatorname{dom}}
\newcommand{\diam}{\operatorname{diam}}
\newcommand{\Span}{\operatorname{span}}
\newcommand{\AC}{\operatorname{AC}}
\newcommand{\ack}{\section*{Acknowledgments}}
\begin{document}

\author[K. Naderi]{Kiyan Naderi}
\address{Institute of Mathematics\\ University of Innsbruck\\
Technikerstraße  16\\ 6020 Innsbruck\\ Austria}
\email{\mailto{kiyan.naderi@univie.ac.at}}

\author[N. Nicolussi]{Noema Nicolussi}
\address{Institute of Mathematics\\ University of Innsbruck\\
Technikerstraße  16\\ 6020 Innsbruck\\ Austria}
\email{\mailto{noema.nicolussi@uibk.ac.at}}

\keywords{metric graph, quantum graph, graph Laplacian, extremal eigenvalues, spectral optimization, resistance metric, Weyl law}
\subjclass[2020]{Primary \msc{81Q35}; \msc{35R02}; Secondary \msc{35P15}; \msc{35P20} }

\thanks{{\it This research was funded in whole or in part by the Austrian Science Fund (FWF) [Grant DOI: 10.55776/PAT4632823]. For open access purposes, the authors have applied a CC BY public copyright license to any author-accepted manuscript version arising from this submission.}}
	
	\maketitle

\begin{abstract}
We introduce and study Laplacians on a finite metric graph endowed with generalized densities, that is, measures of finite mass. One important motivation is that this setting provides a common framework for several interesting classes of operators: discrete graph Laplacians, Kirchhoff Laplacians and Dirichlet-to-Neumann operators on graphs.

Our main interest lies in spectral optimization with respect to the underlying measure. In contrast to the setting of domains and manifolds, we prove that a minimal $k$-th eigenvalue exists, whereas the corresponding maximization problem has no meaning. We then establish connections between these optimal eigenvalues and the geometry of the metric graph, including a transparent geometric characterization of the first optimal eigenvalue via the resistance metric, and a Weyl law for the higher optimal eigenvalues.

\end{abstract}
	
	\section{Introduction}
	Let $X$ be a Euclidean domain or Riemannian manifold and consider the eigenvalue problem
	\begin{equation} \label{eq:EigenvalueEquation}
	\Delta f = \lambda f  \mu,
	\end{equation}
	where $\Delta$ is the Laplacian, $\lambda$ the spectral parameter and $\mu$ a suitable positive measure on $X$. A natural question is to optimize the eigenvalues of~\eqref{eq:EigenvalueEquation} when $\mu$ varies in a fixed class of measures. Such problems have a long tradition and we only point out some variants which have been studied.

	 M.\ G.\ Krein considered~\eqref{eq:EigenvalueEquation} for the Dirichlet Laplacian on a compact interval and measures of fixed mass with an $L^\infty$-density satisfying upper and lower bounds~\cite{krein}. He was able to explicitly compute the densities which minimize and maximize the $k$-th eigenvalue. The analogous problem for higher dimensional domains is sometimes referred to as inhomogeneous or composite media (or membranes in dimension $n=2$) and has received much attention, see the overview in~\cite[Chapter 9]{henrot} and~\cite{cgiko, cmI, cmII, friedland}.

	In the setting of Riemannian manifolds, the optimization problem appeared recently for measures with smooth densities in~\cite{ce19, ces15, ksII, vinokurov}. For Riemannian surfaces, the corresponding maximization problem coincides with the one for conformal eigenvalues due to conformal invariance of the energy form~\cite{ce03}. Concerning more general measures, eigenvalue optimization for Riemannian surfaces was studied by Kokarev in~\cite{kokarev} and appeared more recently in~\cite{ksI}. On higher dimensional manifolds, eigenvalues associated to general measures were also considered in~\cite{gkl}.

	\smallskip
	
	This paper concerns the above optimization problem on a compact metric graph~$G$ (i.e., a finite graph whose edges are considered as intervals of certain lengths). The first task consists in giving a meaning to the eigenvalue equation~\eqref{eq:EigenvalueEquation} on $G$. To this end, to each finite Borel measure $\mu$ on $G$, we associate a Laplacian
	\[
	H_\mu \colon \dom(H_\mu) \subseteq L^2(G, \mu) \to L^2(G, \mu)
	\]
	which is a self-adjoint  operator in the $L^2$-space $L^2(G, \mu)$. The construction is based on considering the energy form
	\[
	q(f) =\int_G |f'(x)|^2 \, dx
	\]
	in $L^2(G, \mu)$ (see Section~\ref{sec:LaplaciansMeasures} for details).
	
	It turns out that several well-studied classes of operators fall into this framework (see Section~\ref{ss:LaplacianBasics}). If the support of $\mu$ coincides with the vertex set $V$ of  $G$, then the operator $H_\mu$ is a weighted discrete Laplacian (Example~\ref{ex:DiscreteLaplacians}). On the other hand, if $\mu$ has a positive density with respect to the Lebesgue measure $dx$ on $G$, then $H_\mu$ turns out to be a weighted Kirchhoff Laplacian (a.k.a. quantum graph~\cite{BerkolaikoKuchment, kurasov2023spectral}, see Example~\ref{ex:WeightedKirchhoffLaplacian}). Discrete Laplacians and Kirchhoff Laplacians are known to enjoy non-trivial relations~\cite{bgp, ekmn, konic21, pankrashkin, VonBelow} and, thus, putting them into the above common framework is also conceptually interesting. As another special case, weighted Dirichlet-to-Neumann operators on $G$ correspond to measures $\mu$ supported on a proper subset of the vertex set $V$ (Example~\ref{ex:DtN}).

For a general measure $\mu$, the Laplacian $H_\mu$ acts on each interval edge as a measure-derivative $f \mapsto - \frac{d}{d\mu} \frac{d}{dx} f$ (see Appendix~\ref{sec:SturmOscillationTheorem}). In the special case of only one interval, such generalized differential operators are also known as Krein strings~\cite{kk}.

	\smallskip
	
	Each Laplacian $H_\mu$ has non-negative, purely discrete spectrum. Counting multiplicities, we enumerate its non-zero eigenvalues as an infinite, increasing sequence $(\lambda_k (H_\mu))_{k=1}^\infty$, with convention $\lambda_k(H_\mu) := + \infty$ for $k \ge \dim(L^2(G, \mu))$.
	
	\smallskip
		
	Our main interest is then to optimize the eigenvalue $\lambda_k(H_\mu)$, $k \ge1$, with respect to the measure $\mu$. For obvious scaling reasons, we impose the normalization condition $\mu(G)=1$, that is, we restrict to the class $\cN$ of probability measures on $G$. It turns out that the maximization problem is trivial in the sense that (Corollary~\ref{cor:MaximumMeaningless})
	\[
	\sup_{\mu \in \widetilde \cN} \lambda_k(H_\mu) = +\infty
	\]
	for all weak-$\ast$-dense sets of measures $\widetilde \cN \subset \cN$. However, the minimization problem is non-trivial with
	\[
	\inf_{\mu \in \widetilde \cN} \lambda_k(H_\mu)  = \min_{\mu \in \cN} \lambda_k(H_\mu) > 0
	\]
	for every dense set $\widetilde \cN \subset \cN$. The above claims follow from a continuity result (Theorem~\ref{thm: spec conv }), which asserts that the $k$-th eigenvalue $\lambda_k(H_\mu)$ depends continuously on $\mu$ in the weak-$\ast$ topology.

Let us stress that this behavior is actually opposite to the situation for higher dimensional manifolds. Indeed, in dimension $n \ge 2$, the $k$-th eigenvalue on the weak-$\ast$-dense class of probability measures with smooth, positive densities remains uniformly bounded above, but can be made arbitrarily small~\cite{ce19, ces15, gny}. On the other hand, it was already observed in \cite{cp18} that the behavior in dimension $n=1$ is opposite. One explanation is that, in higher dimensions, the dependence of the eigenvalues $\lambda_k(\mu)$ on $\mu$ is only upper semi-continuous, see e.g.~\cite[Proposition 1.1]{kokarev} and~\cite[Proposition 4.1]{gkl}.

	\smallskip

Motivated by the above, we define the {\em $k$-th optimal eigenvalue} of a compact metric graph $G$ as
\[
\lambda_k^{\min}(G) := \min_{\mu \in \cN} \lambda_k(H_\mu).
\]
Let us stress the following physical interpretation of $\lambda_k^{\min}(G)$. The measure $\mu$ can be interpreted as a mass distribution in a weighted wave equation
\[
	\mu \cdot \partial_{tt}u(x,t) = \Delta u(x,t).
\]
After separating the variables, we obtain precisely the eigenvalue equation~\eqref{eq:EigenvalueEquation}. Thus, the above minimization problem is equivalent to distributing the mass on an oscillating graph in order to obtain the smallest possible $k$-th natural frequency.

Spectral optimization on metric graphs has so far been investigated mainly in terms of the shape of the graph. Related optimization in terms of metrics, i.e., edge lengths, can be found, e.g.,~in \cite{Band}. To the best of our knowledge, our approach to optimization with regard to measures has not yet been considered. In this regard, we would also like to draw attention to the recent preprint~\cite{Cao}, in which optimization simultaneously with regard to metrics and smooth measures is studied.

The remainder of the paper is devoted to the study of the optimal eigenvalues.

\subsection*{The first optimal eigenvalue $\lambda_1^{\min}(G)$} The optimal eigenvalue $\lambda_1^{\min}(G)$ displays several interesting properties. For instance, in Section~\ref{sec:FirstEigenvalue} we derive three different characterizations of $\lambda_1^{\min}(G)$. In the following, we focus on the first characterization (Theorem~\ref{Lem: Resitance metric and optimal eigenvalue}). Note that, for both discrete and metric graph Laplacians, path metrics on graphs and their diameters play an important role in spectral estimates, see e.g.~\cite{chung, grigoryanbook, kkmm}. As we show,  $\lambda_1^{\min}(G)$ is closely related with a different metric: the resistance metric $r$ (see Section~\ref{ss:CharacterizationResistance}), where the distance $r(x,y)$ between two points $x,y \in G$ is the resistance between them in the sense of electrical networks. Our first main result, Theorem~\ref{Lem: Resitance metric and optimal eigenvalue}, provides an explicit geometric characterization of $\lambda_1^{\min}(G)$:
\[
\lambda_1^{\min}(G) = \frac{4}{\diam_r(G)}, 
\]
where $\diam_r(G)$ is the diameter of $G$ with respect to the resistance metric $r$. Moreover, the minimizing measures are those of the form $\mu = (\delta_x + \delta_y)/2$ for points $x,y\in G$ realizing the $r$-diameter.
In particular, the minimizing Laplacians are Dirichlet-to-Neumann operators (see Remark~\ref{rem:DtNGeneral}).

The proof of Theorem~\ref{Lem: Resitance metric and optimal eigenvalue} is based on a second characterization of $\lambda_1^{\min}(G)$ from Theorem~\ref{thm: characerisation of lambda1} in terms of certain discrete measures. Theorem~\ref{thm:SpectralPartitioning} provides a third characterization related to graph partitioning. The idea of incorporating graph partitions into the analysis originates mainly from \cite{SpecPart}. For further work on spectral partitioning with respect to the Kirchhoff Laplacian, see also~\cite{Nodalcountandpartition,SpecPart2}.

\smallskip

In Section~\ref{sec:PropertiesFirstEigenvalue}, we study further properties of $\lambda_1^{\min}(G)$. We derive estimates for $\lambda_1^{\min}(G)$ in terms of the natural path metric on $G$ and a Cheeger-type constant, and investigate their optimality. Moreover, Theorem~\ref{thm: points avoid vertices} asserts that the points $x$ and $y$ supporting a minimizing measure cannot be vertices of degree $\ge 3$. Finally, we compute $\lambda_1^{\min}(G)$ for several examples.

\subsection*{The higher optimal eigenvalues and Weyl's law}
It turns out that the higher optimal eigenvalues $\lambda_k^{\min}(G)$, $ k \ge 2$, are difficult to find even in simple cases. Whereas we compute them for the path graph $P = [0,1]$ (Theorem~\ref{thm:ConfEVPath}), even this most basic example requires non-trivial considerations and relies on nodal properties of eigenfunctions, namely a Sturm oscillation theorem on $P$ (Theorem~\ref{Thm: Sturm oscillation}). Note that for the smaller class of probability measures on $P$ having a density satisfying upper and lower bounds, the maximization and minimization problems for the $k$-th eigenvalue were both solved by Krein~\cite{krein}.

On the other hand, for a general graph $G$, the asymptotic behavior of the optimal eigenvalues for $k \to  \infty$ becomes transparent. As our second main result, we prove a Weyl law (Theorem~\ref{thm:Weyl'sLaw}), showing that
\begin{equation} \label{WeylIntro}
		\frac{\lambda_k^{\min}(G)}{k^2} \to \frac{4 }{L(G)}, \qquad k \to \infty,
\end{equation}
	where $L(G)$ is the total length of $G$.

It should be noted that for every fixed measure $\mu \in\cN$ on $G$, it follows from classical results that the Laplacian $H_\mu$ obeys the Weyl law
\begin{equation} \label{WeylIntro2}
			\frac{\lambda_k  (H_\mu)}{k^2} \to \frac{\pi^2 }{\big(\int_G \sqrt{ f } \, dx\big)^2  }, \qquad k \to \infty,
\end{equation}
where $f =\frac{d\mu_{\mathrm{ac}}}{dx}$ is the density of the absolutely continuous part $\mu_{\mathrm{ac}}$ of $\mu$. Indeed, this is stated for a single interval in~\cite[Chapter 6.8]{gk}, and one can pass to a general graph $G$ by an argument similar to Lemma~\ref{lem:EigenvalueInterlacingConsequence}, see also~\cite{BerkolaikoKuchment, kurasov2023spectral} in case that $\mu = dx$.

However, we stress that our Weyl law~\eqref{WeylIntro} is conceptually rather different to~\eqref{WeylIntro2}. First of all, the factors $\pi^2$ and $4$ differ. Moreover, as the example of the path graph shows, the optimal eigenvalues in general stem from infinitely many pairwise different measures, which are moreover all finitely supported. In particular, each corresponding operator has only finitely many eigenvalues and its Weyl law~\eqref{WeylIntro2} thus has no proper meaning. On the other hand,~\eqref{WeylIntro} provides a non-trivial asymptotical statement about these infinitely many different operators.

\subsection*{Structure of the paper}
Section~\ref{sec:Preliminaries} contains preliminaries on metric graphs and their function spaces. In Section~\ref{sec:LaplaciansMeasures}, we introduce the Laplacian $H_\mu$ for a measure $\mu$ on $G$ and establish its basic properties. The optimal eigenvalues are introduced in Section~\ref{sec:DefinitionEigenvalues}. Section~\ref{sec:FirstEigenvalue} contains different characterizations of $\lambda_1^{\min}(G)$. Further properties of $\lambda_1^{\min}(G)$ are investigated in Section~\ref{sec:PropertiesFirstEigenvalue}. Section~\ref{sec:Weyl'sLaw} contains our results on the higher optimal eigenvalues. Finally, Appendix~\ref{sec:SturmOscillationTheorem} provides a Sturm oscillation theorem for Laplacians on path graphs.

	
\section{Preliminaries} \label{sec:Preliminaries}
This section is of preliminary character, where we introduce metric graphs, their Sobolev spaces and their harmonic functions.
	
\subsection{Metric graphs and their Sobolev spaces} \label{ss:MetricGraphsBasics}

In what follows, $G_d = (V, E)$ will be an unoriented, finite and connected graph with vertex set $V$ and edge set $E$. For two vertices $u$, $v\in V$ we shall write $u\sim v$ if there is an edge $e \in E$ connecting $u$ and $v$. Conversely, if $e$ is an edge connecting $u$ and $v$, then, with a slight abuse of notation, we write $e=uv$, even if there may be several such edges. If an edge $e \in E$ is incident to a vertex $v \in V$, then we also write $e \sim v$. The \emph{degree} $\deg(v)$ of a vertex $v \in V$ is defined as the number of edges incident to $v$ (with loop edges counted twice).

Assigning each edge $e\in E$ a finite length $\ell(e) \in (0,\infty)$ and considering the corresponding {\em edge length functio}n $\ell \colon E\to(0, \infty)$, we can then naturally associate with $(G_d,\ell) = (V, E, \ell)$ a metric space $G$: we first define an orientation on the edge set $E$ and then identify each edge $e$ with a copy of the interval $\cI_e = [0, \ell(e)]$, with the initial and terminal vertex corresponding to the left, respectively, right endpoint of $\cI_e$. The topological space $G$ is then obtained by ``glueing together" the endpoints of edges corresponding to the same vertex (in the sense of a topological quotient). $G$ is a compact Hausdorff space and metrizable by the {\em path metric}\label{not:natpathmet} $\varrho$: the distance $\varrho(x,y)$ between two points $x,y \in G$ is defined as the arc length of the shortest path connecting them.

A \emph{metric graph} is a metric space $G$ arising from the above construction for some collection $(G_d, \ell) =(V, E,\ell)$. More specifically, $G$ is then called the \emph{metric realization} of $(G_d, \ell)$. Conversely, a pair $(G_d, \ell)$ whose metric realization coincides with $G$ is called a \emph{model} of $G$.

Clearly, different models may give rise to the same metric graph. Moreover, any metric graph $G$ has infinitely many models. E.g., different models can be constructed by {\em subdivision}, i.e., subdividing an edge $e$ of $G$ into two edges $e_1$ and $e_2$ and equipping them with lengths such that $\ell(e_1)  + \ell(e_2)= \ell(e)$. 

A graph $G_d = (V,E)$ is called \emph{essential} if either it has no vertices of degree two, or it is the graph consisting of one vertex and one loop edge. Any metric graph $G$ has a model $(G_d, \ell)$ in which $G_d$ is essential. Such a model is called an \emph{essential model} for $G$. All essential models of $G$ have the same underlying essential graph $G_d$. Moreover, the essential model $(G_d, \ell)$ for a metric graph $G$ is unique unless $G$ has non-trivial automorphisms, in which case, $G$ has a finite number of essential models $(G_d, \ell)$. (These are obtained by applying the automorphisms $\varphi$ to one fixed model $(G_d, \ell)$, that is, considering edge length functions given by compositions $\ell \circ \varphi$.)

In our paper, we will usually consider a metric graph together with a fixed, not necessarily essential model. Abusing slightly the notation, we will usually not distinguish between the two objects and also write $G = (V, E, \ell)$ or $G= (G_d, \ell)$.

To extend the notion of degree to arbitrary points $x \in G$, we set $\deg(x) := 2$, whenever $x \notin V$ for at least one model $(G_d, \ell) = (V,E,\ell) $.  

For a metric graph $G = (V,E, \ell)$, we denote by $L= L(G)=\sum_{e\in E} \ell(e)$ its {\em total length} and by
\[
\diam(G) := \diam_\varrho(G) := \max_{x,y \in G} \varrho(x,y)
\]
its {\em diameter} with respect to the path metric $\varrho$. Note also that every connected component $G'$ of a closed subset of $G$ is again a connected metric graph.

We denote by $dx$ the natural Lebesgue measure on $G$. The Lebesgue volume of a measurable subset $A\subset G$ is denoted by $L(A)$. The standard $L^2$-function space is then given by
		\[
		L^2(G):= \left\{ f\colon G\to \R \text{ measurable } \; | \; \|f\|^2_{L^2(G)} := \int_{G}|f|^2 \, dx < \infty \right\}.
		\]
		Furthermore, we define the Sobolev space $H^1(G)$ on $G$ as
		\[
		H^1(G) := \left\{ f\colon G\to \R \; | \; f \text{ is continuous, } \left.f\right|_{e} \in H^1((0,\ell(e)))  \text{ for every }e \in E \right\},
		\]
		where $H^1((0,\ell(e)))$ is the standard first-order Sobolev space on the interval $(0, \ell(e))$. The \emph{energy form} $q$ is defined as
		\begin{align*}	
			q\colon& H^1(G) \times H^1(G) \to \R, \qquad q(f,g) := \int_{G} f'g' d x.
		\end{align*}
		We abbreviate $q(f) := q(f,f)$, $f \in H^1(G)$, and call $q(f)$ the {\em energy of $f$}.
		
		\begin{rmk}
			The derivative of a function $f \in L^2(G)$ on an edge $e$ is always meant in the distributional sense, i.e., with regard to test functions in $C^\infty_0(e) = C^\infty_0((0, \ell(e)))$. 
		\end{rmk}
		\begin{rmk}
			Note that the definition of the derivative needs a priori a choice of orientation on the edges. However, for the quantities which are of interest to us, a change of orientation results at most in a change of sign. For instance, the energy form $q$ and scalar product $(\cdot , \cdot)_{H^1}$ are independent of the choice of orientation. 
		\end{rmk}
		The Sobolev space $H^1(G)$ endowed with the scalar product 
		\[
		 (f,g)_{H^1} := (f,g)_{H^1(G)} :=  \int_G fg \, d x + \int_{G} f'g' d x
		\]
		is well-known to be a Hilbert space. The corresponding norm is denoted by $\|f \|_{H^1} := (f,f)_{H^1}^\frac{1}{2}$, $f \in H^1(G)$.

For open subsets $O \subset G$, the spaces $L^2(O)$ and $H^1(O)$ are defined via restrictions,
\[
				H^1(O) := \{f \colon O \to \R  \; | \; \exists \widetilde{f} \in H^1(G)\text{ with } \widetilde{f} \big|_{O} \equiv f\}, \quad
\]
and
\[
\|f\|_{H^1(O)} := \inf_{\substack{\widetilde{f} \in H^1(G), \; \widetilde{f} \big|_{O} \equiv f}} \|\widetilde{f} \|_{H^1(G)}.
\]
For closed subsets $A \subset G$ with connected components $A_i \subset A$, the spaces $L^2(A)$ and $H^1(A)$ are defined as the direct sum of the respective spaces on every connected component $A_i$ considered as its own metric graph.

We often split a function $f \in H^1(G)$ into the family of functions $f_e := f\big|_{\mathring{e}}$, $e \in E$, where $\mathring{e}$ means the interior of the interval $e\simeq [0, \ell(e)]$.

\subsection{Harmonic functions on metric graphs}
Let $G= (V,E, \ell)$ be a metric graph.
\begin{defn}
	Let $O\subset G$ be open. A function $f \in H^1(G)$ is called harmonic on $O$, if
	\[
	q(f,g) = 0 \qquad \text{ for every } g \in H^1(G) \text{ with } g|_{G \setminus O} \equiv 0.  
	\]
\end{defn}

Harmonic functions admit the following pointwise description.

\begin{lem} \label{Prop: characterization harmonicity} Let $O \subset G$ be open and $f \in H^1(G)$. The following statements are equivalent: 
				\begin{itemize}
					\item [(i)]  $f$ is harmonic on $O$. 
					\item [(ii)] In every point $x \in O$, the following holds:
				\begin{itemize}
				\item If $x \in O \setminus V$ is not a vertex, then there exists an open neighborhood $U(x) \subset O$ of $x$ which is isomorphic to an interval $(a,b)$ and such that $f$ is linear on $U(x)$.
				\item If $x \in O \cap V$ is a vertex, then $0 = \sum_{e \in E, e \sim x } f_e'(x)$, where all edges $e$ are oriented towards $x$ in the computation of $f_e'$. 
						\end{itemize}
					\end{itemize}
			\end{lem}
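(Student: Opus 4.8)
The plan is to establish the two implications separately, the bridge in each direction being a careful integration by parts on each edge combined with a judicious choice of test functions $g \in H^1(G)$ vanishing on $G \setminus O$.

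For (i) $\Rightarrow$ (ii), I would first treat non-vertex points. Fix an edge $e$ and test against $g \in C^\infty_0(\mathring{e})$ with $\supp(g) \subset O$; extending by zero produces an admissible test function, so harmonicity yields $\int_e f_e' g' \, dx = 0$ for all such $g$. This says precisely that the distributional derivative of $f_e'$ vanishes on $\mathring{e} \cap O$, hence $f_e'$ is constant — i.e.\ $f$ is affine — on every connected component of $\mathring{e} \cap O$. This proves the first bullet of (ii) and, as a byproduct, shows that the one-sided derivatives $f_e'(x)$ at a vertex $x \in O$ exist. To obtain the Kirchhoff condition at a vertex $x \in O \cap V$, I would choose a ``tent'' function $g$ supported in a small neighborhood $U \subset O$ of $x$, equal to $1$ at $x$ and decaying (say, linearly) to $0$ along each incident edge before leaving $O$. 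Integrating by parts on each edge $e \sim x$ and using that $f_e'' = 0$ on $U \cap \mathring{e}$, the interior terms drop out and only the boundary contribution at $x$ survives; with all edges oriented towards $x$ this gives $q(f,g) = g(x) \sum_{e \sim x} f_e'(x)$. Since $g(x) = 1$ and $q(f,g) = 0$, the Kirchhoff sum vanishes.

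For the converse (ii) $\Rightarrow$ (i), I would argue by direct computation. Let $g \in H^1(G)$ with $g|_{G \setminus O} \equiv 0$; since $G \setminus O$ is closed, $g$ (and hence $g'$) vanishes a.e.\ there, so $q(f,g) = \sum_{e \in E} \int_{\mathring{e} \cap O} f_e' g' \, dx$. On each connected component $I = (a_I, b_I)$ of $\mathring{e} \cap O$ the function $f_e$ is affine by the first bullet of (ii), so $f_e'$ equals a constant $c_I$ and $\int_I f_e' g' \, dx = c_I\big(g(b_I) - g(a_I)\big)$. An endpoint lying in $\partial O \cap \mathring{e}$ contributes nothing, because $g$ vanishes on $G \setminus O$; the only surviving terms come from endpoints that are vertices $v \in O$. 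Collecting these and checking that the orientation bookkeeping turns each surviving contribution into $f_e'(v)$ computed with $e$ oriented towards $v$ (the sign from the left endpoint reversing the parametrization exactly matches the sign from $g(b_I) - g(a_I)$), one obtains $q(f,g) = \sum_{v \in V \cap O} g(v) \sum_{e \sim v} f_e'(v)$, which vanishes by the Kirchhoff condition. Hence $f$ is harmonic on $O$.

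The routine parts are the integrations by parts themselves; the point requiring genuine care is the \emph{bookkeeping at the endpoints of the intervals $I$}, namely correctly distinguishing endpoints that are genuine vertices of $G$ lying in $O$ (which produce the Kirchhoff terms) from endpoints on $\partial O$ (where $g = 0$), and tracking orientations so that the signs assemble into the symmetric sum $\sum_{e \sim v} f_e'(v)$. Constructing the admissible tent test functions and verifying that they lie in $H^1(G)$ and vanish outside $O$ is elementary, and is exactly where the openness of $O$ enters.
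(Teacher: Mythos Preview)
Your proposal is correct and follows essentially the same route as the paper's proof: test functions in $C^\infty_0(\mathring{e})$ to get local linearity, a tent function at each vertex for the Kirchhoff condition, and integration by parts for the converse. The paper in fact leaves the direction (ii) $\Rightarrow$ (i) as ``a similar argument using integration by parts'', so your explicit endpoint bookkeeping is more detailed than what the paper writes out.
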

	\begin{proof}
Assume that $f  \in H^1(G)$ is harmonic. Let $x \in  O$ be a point in the interior of an edge $e \in E$. Consider a test function $g$ supported in a small open neighborhood $U(x) \subset O \cap e$. Then
\[
					0= q(f,g)= \int_{e} f' g' d x = - \int_{e} f'' g d x,
\]
					so $f \big|_{U(x)}'' \equiv 0 \in L^2(U(x))$ and $f$ is linear on $U(x)$.

On the other hand, if $x \in O \cap V$ is a vertex, then we can choose a test function $g \in H^1(G)$ supported in a small open neighborhood $U(x) \subset O$ of $x$ with $g(x) \neq 0$. Using the first observation we obtain
					\begin{align*}
							0 = q(f,g)  = \sum_{e: e\sim x} \int_{e} f_e' g_e' d x &= \sum_{e: e \sim x } g(x)f_{e}'(x) - \int_{e} g f_{e}'' d x \\&= g(x)\sum_{e: e \sim x} f_{e}'(x) , 
						\end{align*}
					where all edges $e$ are  oriented towards $x$. Altogether, every harmonic function $f \in H^1(G)$ satisfies the above pointwise conditions. Conversely, if the pointwise conditions hold for $f \in H^1(G)$, then the harmonicity of $f$ follows from a similar argument using integration by parts.
		\end{proof}

In the following, we establish basic facts about harmonic functions on metric graphs. Note that analogous results for weighted discrete graphs are well-known, see e.g.~\cite{grigoryanbook}. Whereas essentially the proofs carry over to metric graphs, we include them for the sake of completeness. We begin with the maximum principle.

\begin{prop}\label{Prop: Max and Min in harmonic functions}
	Let $O \subset G$ be open and $f \in H^1(G)$ harmonic on $O$. Then 
	\[
	\max_{x \in G} f(x) = \max_{x \in G\setminus O} f(x), \qquad \min_{x \in G} f(x) = \min_{x \in G\setminus O} f(x). 
	\] 
\end{prop}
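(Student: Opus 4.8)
The plan is to reduce the global statement to a local maximum principle, derived from the pointwise characterization of harmonicity in Lemma~\ref{Prop: characterization harmonicity}, and then to globalize using the connectedness of $G$. Since $f \in H^1(G)$ is continuous and $G$ is compact, both extrema are attained, so the statement is meaningful. It suffices to treat the maximum, as the minimum follows by applying the result to $-f$, which is again harmonic because the defining condition $q(f,g)=0$ is linear in $f$. Moreover, since $G \setminus O \subseteq G$ one always has $\max_{G\setminus O} f \le \max_G f$, so only the reverse inequality requires proof.

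The core is the following local claim: if $f$ attains its global maximum $M := \max_G f$ at a point $x_0 \in O$, then $f \equiv M$ on a neighborhood of $x_0$. First I would handle $x_0 \in O \setminus V$: by Lemma~\ref{Prop: characterization harmonicity}, $f$ is affine on a neighborhood $U(x_0) \cong (a,b)$, and an affine function attaining an interior maximum must be constant. For $x_0 \in O \cap V$, I would choose a small neighborhood $W \subset O$ of $x_0$ whose intersection with each incident edge $e$ is a half-open segment of interior edge points; on each such segment $f$ is affine (being locally affine on a connected interval), so writing $g_e(s) = f(x_0) + c_e s$ for the arc-length parameter $s \ge 0$ pointing away from $x_0$, the outgoing slopes satisfy $c_e \le 0$ because $x_0$ is a maximum. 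The Kirchhoff condition $\sum_{e \sim x_0} f_e'(x_0) = 0$ reads $\sum_e c_e = 0$ (up to the sign induced by orienting edges towards $x_0$), and together with $c_e \le 0$ this forces every $c_e = 0$. Hence $f \equiv M$ near $x_0$ in this case too.

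With the local claim in hand, I would globalize. Consider $A := \{x \in G : f(x) = M\}$, which is nonempty and closed. Assuming towards a contradiction that $\max_{G\setminus O} f < M$, every maximizer lies in $O$, so $A \subset O$; the local claim then shows $A$ is open. Since $G$ is connected and $A$ is a nonempty clopen set, $A = G$, forcing $f \equiv M$ and contradicting $f < M$ on the nonempty set $G\setminus O$. Therefore $\max_G f = \max_{G\setminus O} f$, and applying this to $-f$ yields the corresponding identity for the minimum.

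I expect the main obstacle to be the vertex case of the local claim: one must verify that $f$ is genuinely affine up to the vertex along each incident edge (upgrading the local affineness of Lemma~\ref{Prop: characterization harmonicity} to the whole short segment and using continuity at $x_0$), and then track the orientation convention carefully so that the balance $\sum_e f_e'(x_0) = 0$ correctly translates into $\sum_e c_e = 0$ with nonpositive outgoing slopes. This sign bookkeeping is the delicate point that makes the sum-to-zero argument pin every slope down to zero.
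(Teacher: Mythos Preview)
Your proof is correct and follows essentially the same approach as the paper: both argue by contradiction, use the pointwise characterization in Lemma~\ref{Prop: characterization harmonicity} to show that a harmonic function attaining an interior maximum must be locally constant, and then globalize by connectedness. The paper is terser---it asserts in one line that Lemma~\ref{Prop: characterization harmonicity} forces $f$ to be constant on the connected component $O'\subset O$ containing $x_0$ and then reaches a contradiction at a boundary point of $O'$---whereas you spell out the vertex case (Kirchhoff balance plus nonpositive outgoing slopes) and globalize via the clopen set $\{f=M\}$ in $G$; these are minor stylistic differences rather than a genuinely different route.
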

\begin{proof}
	Since $f$ is continuous and $G$ is compact, $f$ attains its maximum. Assume that $\max_{x \in G\setminus O} f(x) <  \max_{x \in G} f(x)$ and fix a point $x_0 \in O$ with $f(x_0) = \max_{x \in G} f(x)$. Lemma~\ref{Prop: characterization harmonicity} implies that $f$ is constant on the whole connected component $O'\subset O$ containing $x_0$. Thus, there exists $y \in \partial O' \subset G \setminus O$ with $f(y)=\max_{x\in G} f(x)$, which is a contradiction. The same applies to the minimum of $f$. 
\end{proof}

We have the following results on existence and uniqueness of solutions to Dirichlet problems on metric graphs.

\begin{prop} \label{Prop: Harmonic extension}
	Let $\varnothing \neq A \subset V$ be finite and let $(f_x)_{x \in A} \subset \R$ be given function values.
	\begin{enumerate}
		\item [(i)] \label{1:HarmonicExtension} There exists a unique function $f \in H^1(G)$ such that 
		\begin{equation} \label{harmonic extension}
			\begin{cases}
			f(x) = f_x \text{ for all } x \in A, \\
			f \text{ is harmonic on } G\setminus A. 
		\end{cases}
		\end{equation}
	
	\item [(ii)] \label{2:HarmonicExtension} The solution of~\eqref{harmonic extension} is the unique function $f \in H^1(G)$ such that
		\begin{equation} \label{harmonic extension minimum}
			\begin{cases}
			f(x) = f_x, & \text{ for all } x \in A, \\
			Q(f)\le Q(g), & \text{ for all }  g \in \mathcal{F},
			\end{cases}
		\end{equation}
	where $\mathcal{F} = \{g \in H^1(G)| \, g(x) = f_x \text{ for all } x \in A\}$. 
	\item [(iii)] \label{3:HarmonicExtension} Let $f\in H^1(G)$ be harmonic on $G \setminus A$ and $g \in H^1(G)$. Then 
	\begin{equation} \label{harmonic extension IBP}
	q(g,f) =  \sum_{x \in A}g(x) \sum_{e: e \sim x } \frac{f(x) - f(y_e)}{\ell(e)},
	\end{equation}
	where $y_e \in V$ is the unique vertex which is connected to $x$ via $e$.  
	\end{enumerate}
\end{prop}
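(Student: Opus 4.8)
The plan is to treat (i) and (ii) together by the direct method of the calculus of variations, deduce (i) from the resulting minimizer, and then obtain (iii) by an edge-wise integration by parts. Write $Q(f) = q(f,f)$ for the energy. The admissible set $\mathcal{F}$ is nonempty (it contains, e.g., any continuous interpolation of the values $(f_x)_{x\in A}$ that is linear on each edge), affine, and $H^1$-closed. The decisive point for existence is coercivity of $Q$ on $\mathcal{F}$: fixing some $x_0 \in A$, for every $g \in \mathcal{F}$ and every $x \in G$ I would estimate $|g(x) - f_{x_0}| = |g(x) - g(x_0)| \le \int_\gamma |g'| \le L(G)^{1/2} Q(g)^{1/2}$ along a simple path $\gamma$ from $x_0$ to $x$ (using Cauchy--Schwarz and $\int_\gamma |g'|^2 \le Q(g)$). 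This bounds $\|g\|_\infty$, hence $\|g\|_{L^2}$, in terms of $Q(g)$, so a minimizing sequence is bounded in $H^1(G)$. Extracting a weakly convergent subsequence $g_n \rightharpoonup f$, the weak lower semicontinuity of $g \mapsto \|g'\|_{L^2}^2$ gives $Q(f) \le \liminf_n Q(g_n)$, while the weak continuity of the bounded point evaluations $g \mapsto g(x)$, $x \in A$ (bounded because $H^1(G) \hookrightarrow C(G)$), ensures $f \in \mathcal{F}$. Thus an energy minimizer $f$ exists.

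Next I would identify this minimizer with the harmonic extension. Testing with $f + t\varphi$ for arbitrary $\varphi \in H^1(G)$ with $\varphi|_A \equiv 0$ and differentiating at $t = 0$ yields the Euler--Lagrange identity $q(f,\varphi) = 0$ for all such $\varphi$; since $G \setminus (G\setminus A) = A$, this is precisely the statement that $f$ is harmonic on $G \setminus A$, proving the existence part of (i). For uniqueness in (i), if $f_1, f_2$ both solve~\eqref{harmonic extension}, then $h = f_1 - f_2$ is harmonic on $G \setminus A$ with $h|_A \equiv 0$, so Proposition~\ref{Prop: Max and Min in harmonic functions} forces $\max_G h = \min_G h = 0$, i.e.\ $f_1 = f_2$. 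To finish (ii), for the harmonic $f$ and any $g \in \mathcal{F}$ the function $\varphi = g - f$ vanishes on $A$, so $q(f,\varphi) = 0$ and $Q(g) = Q(f) + 2q(f,\varphi) + Q(\varphi) = Q(f) + Q(\varphi) \ge Q(f)$, with equality iff $Q(\varphi) = 0$, i.e.\ $\varphi$ is constant, i.e.\ (since $A \neq \varnothing$) $g = f$. Hence $f$ is the unique energy minimizer.

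For (iii) the key reduction is that, because $A \subset V$, every edge interior lies in $G \setminus A$, so by Lemma~\ref{Prop: characterization harmonicity} the harmonic function $f$ is linear on each edge with $f_e'(x) = \tfrac{f(x) - f(y_e)}{\ell(e)}$ and $f'' = 0$ there. Integrating $q(g,f) = \sum_{e \in E} \int_e g' f'\, dx$ by parts edge by edge therefore leaves only boundary terms, and regrouping the two endpoint contributions of each edge at each vertex $v$ gives $q(g,f) = \sum_{v \in V} g(v) \sum_{e \sim v} f_e'(v)$, all edges oriented towards $v$. At every $v \in V \setminus A$ the Kirchhoff condition $\sum_{e \sim v} f_e'(v) = 0$ from Lemma~\ref{Prop: characterization harmonicity} annihilates the corresponding term, leaving exactly~\eqref{harmonic extension IBP}. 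The only delicate bookkeeping here is the sign when collecting an edge's two endpoint contributions into inward-oriented derivatives; apart from that, I expect the genuine work to lie in the coercivity and weak-compactness step of the first paragraph, the remaining arguments being an application of the maximum principle and routine integration by parts.
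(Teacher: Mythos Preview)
Your argument is correct and follows the same overall architecture as the paper: minimize the energy on $\mathcal F$, read off harmonicity as the Euler--Lagrange equation, deduce uniqueness from the maximum principle, and obtain (iii) from the edgewise linearity of $f$.

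The one place where you genuinely diverge is the existence step. You invoke weak compactness in $H^1(G)$ together with weak lower semicontinuity of $Q$; the paper instead avoids weak topologies by first restricting to the finite-dimensional affine space of edgewise linear functions with the prescribed values on $A$ (where coercivity plus continuity immediately give a minimizer), and then checks that this edgewise-linear minimizer also minimizes over all of $\mathcal F$ by splitting any competitor into its edgewise-linear part and a part vanishing at all vertices. Your route is the standard direct-method argument and transfers verbatim to infinite-dimensional situations; the paper's route is more elementary (no weak limits needed) and exploits the special fact that harmonic functions on a metric graph are edgewise linear. For (iii) there is a similar minor contrast: you integrate by parts edge by edge and kill the $V\setminus A$ contributions via Kirchhoff, whereas the paper first splits $g$ into an edgewise-linear part supported on $A$ and a remainder vanishing on $A$, using harmonicity to discard the latter before computing. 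Both are equally short once the edgewise linearity of $f$ is in hand.
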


\begin{proof} 
We first prove that solutions to~\eqref{harmonic extension} are unique. Let $f_1,f_2 \in H^1(G)$ be two solutions of \eqref{harmonic extension} and consider $f:=f_1-f_2$. Then $f$ is harmonic on $G \setminus A$ and $f(x)=0$ for all $x \in A$, so $f \equiv 0$ by Proposition~\ref{Prop: Max and Min in harmonic functions}.

As  the next step, we prove that every solution $f$ to~\eqref{harmonic extension minimum} solves~\eqref{harmonic extension}. Let $g \in H^1(G)$ with $g\big|_{A} \equiv 0$. For every $t \in \R$, the function $f_t := f+tg$ satisfies $f_t(x) = f_x$ for all $x \in A$. We infer that 
		\[
		0 = \frac{d}{dt}\Big|_{t=0} q(f+tg) = 2q(f,g).
		\] 
This proves that $f$ is harmonic on $G\setminus A$ and solves~\eqref{harmonic extension}.

The claims in~(i) and~(ii) then follow, if we prove that~\eqref{harmonic extension minimum} has a solution $f \in H^1(G)$. Define $F \in H^1(G)$ as the edgewise linear function with values in vertices
		\[ 
		F(x) := \begin{cases}
			f_x,  & x \in A, \\
			0, & x \in V \setminus A.
		\end{cases}
		\]
		Consider the following finite dimensional function space
		\[\mathcal{F}_1 := \{f \in H^1(G) \; | \; f \text{ linear on every edge}, f(x) = 0 \text{ for all } x \in A \}\]
		and also 
		\[\mathcal{F}_2 := \{f \in H^1(G) \; | \; f(x) = 0 \text{ for all } x \in A\}.\]
		By construction, $\mathcal{F} = F + \mathcal{F}_2$.

		 Fix $x_0 \in A$. Every $x \in G$ can be connected to $x_0$ by a path $\gamma$ of length $\ell(\gamma)\le \diam G$. Then for every $f \in F + \mathcal{F}_2$ holds 
		\begin{align*}
			|f(x)| &\leq \int_{\gamma} |f'(s)| d s + |f_{x_0}|\\ &\leq \sqrt{\int_{\gamma} |f'(s)|^2 d s} \sqrt{\ell(\gamma)} + |f_{x_0}|  \leq  \sqrt{\diam G} \sqrt{q(f)}  + |f_{x_0}| . 
		\end{align*}
		
		Integrating over $G$ leads to
		\[
		\|f\|_{L^2(G)} \leq  \sqrt{L(G)} \|f\|_{\infty}  \leq  \sqrt{L(G)}( \sqrt{\diam G} \sqrt{q(f)} + |f_{x_0}|). 
		\]
		Thus, if a sequence $(f_n)_{n \in \N} \subset F +\mathcal{F}_1$ tends to infinity in $\|\cdot\|_{H^1}$-norm, then $q(f_n) \to \infty $. In addition, $q\colon H^1(G) \to \R_{\geq 0}$ is continuous and $F + \mathcal{F}_1$ is closed, so $q$ must attain its minimum in $F+\mathcal{F}_1$.
		
Let $f$ be a minimizer of $q$ in $F+\mathcal{F}_1$. Then $f$ minimizes $q$ also in $\mathcal{F} = F+\mathcal{F}_2$. Indeed, assume there exists $g \in \mathcal{F}_2$ with $q(F+g) < q(f)$. Write $g=g_1 +g_2$ with $g_1 \in \mathcal{F}_1$ and $g_2 \in H^1(G)$ with $g_2(x) =0$ for every vertex $x \in V$. Using Lemma~\ref{Prop: characterization harmonicity} we infer that $F+g_1$ is harmonic on $G \setminus V$ and thus $q(F+g_1, g_2) = 0$.  
		But then		
	\begin{align*}
			q(f) > q(F+g) = q(F+g_1) + q(g_2) \geq q(F+g_1),
	\end{align*}
		contradicting that $f$ minimizes $q$ in $F+\mathcal{F}_1$. Thus, $f$ satisfies~\eqref{harmonic extension minimum}.

It remains to prove the equality in~\eqref{harmonic extension IBP}. Let $f, g \in H^1(G)$ be as above. Lemma~\ref{Prop: characterization harmonicity} shows that $f$ is linear (or harmonic) on every edge. We decompose $g=g_1+g_2$ with $g_1$ linear on every edge, $g_1(x)=g(x)$ for every $x \in A$, and $g_1(x)=0$ for every $x \in V \setminus A$. Then $g_2\big|_{A} \equiv 0$ and we calculate 
	\begin{align*}
		q(g,f) = q(g_1, f) = \sum_{e \in E} \int_{e} g_1' f' d x = \sum_{x \in A}g(x) \sum_{e: e \sim x } \frac{f(x) - f(y_e)}{\ell(e)},
	\end{align*}
completing the proof.
\end{proof}

\begin{prop}\label{Prop: Unique continuation}
			Let $\varnothing \neq A \subset G$ be a closed subset and $f \in H^1(G)$. Then there exists a unique function $\widetilde{f} \in H^1(G)$ such that
			\begin{equation} \label{eq: harm extension clsd set}
				\begin{cases}
				\widetilde{f}|_{A} = f|_{A}, \\
				\widetilde{f} \text{ is harmonic on } G \setminus A. 
			\end{cases}
			\end{equation}
\end{prop}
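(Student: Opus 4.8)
The plan is to treat uniqueness and existence separately, the only substantial new feature compared with Proposition~\ref{Prop: Harmonic extension} (where $A$ was a finite subset of $V$) being that $A$ is now an arbitrary closed set.

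For uniqueness I would argue exactly as before. If $\widetilde{f}_1$ and $\widetilde{f}_2$ both solve~\eqref{eq: harm extension clsd set}, their difference $h := \widetilde{f}_1 - \widetilde{f}_2 \in H^1(G)$ is harmonic on $O := G \setminus A$ and vanishes on $A = G \setminus O$. Since $A \neq \varnothing$, the maximum principle (Proposition~\ref{Prop: Max and Min in harmonic functions}) forces $\max_{x \in G} h = \max_{x \in A} h = 0$ and $\min_{x \in G} h = \min_{x \in A} h = 0$, hence $h \equiv 0$.

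For existence I would use the Dirichlet principle, that is, minimize the energy $q$ over the affine set $\mathcal{F} := \{g \in H^1(G) \mid g|_A = f|_A\} = f + \mathcal{F}_0$, where $\mathcal{F}_0 := \{h \in H^1(G) \mid h|_A \equiv 0\}$. Because $H^1(G)$ embeds continuously into $C(G)$, every evaluation $g \mapsto g(x)$ is continuous, so $\mathcal{F}_0$ is a closed subspace and $\mathcal{F}$ a nonempty (it contains $f$), closed, convex subset. Fixing $x_0 \in A$ and using that $g(x_0) = f(x_0)$ is constant along $\mathcal{F}$, the coercivity estimate from the proof of Proposition~\ref{Prop: Harmonic extension} yields a bound of the form $\|g\|_{H^1}^2 \le C\,(q(g) + 1)$ on $\mathcal{F}$, so $q$ is coercive there. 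Once a minimizer $\widetilde{f}$ is produced, the first-variation identity $0 = \frac{d}{dt}\big|_{t=0} q(\widetilde{f} + t\varphi) = 2\,q(\widetilde{f}, \varphi)$ for all $\varphi \in \mathcal{F}_0$ is literally the definition of $\widetilde{f}$ being harmonic on $G \setminus A$; together with $\widetilde{f}|_A = f|_A$ this shows $\widetilde{f}$ solves~\eqref{eq: harm extension clsd set}.

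The main obstacle is producing the minimizer itself: unlike the finite case, one cannot reduce the problem to the finite-dimensional space of edgewise linear functions, so the elementary ``a continuous function attains its minimum on a closed coercive set'' argument used before no longer applies directly. I would overcome this by the direct method in the Hilbert space $H^1(G)$: a minimizing sequence is bounded by coercivity, a weakly convergent subsequence $g_n \rightharpoonup \widetilde{f}$ has its limit in the weakly closed (convex and closed) set $\mathcal{F}$, and $q(g) = \|g'\|_{L^2(G)}^2$ is weakly lower semicontinuous, whence $q(\widetilde{f}) \le \liminf_n q(g_n) = \inf_{\mathcal{F}} q$. Alternatively, and perhaps more cleanly, I would renorm $H^1(G)$ by the equivalent inner product $\langle g, h\rangle_\ast := q(g,h) + g(x_0)h(x_0)$ (equivalence to $(\cdot,\cdot)_{H^1}$ being exactly the coercivity estimate) and observe that minimizing $q$ over $\mathcal{F}$ amounts to the nearest-point projection onto a closed affine subspace, whose existence and uniqueness follow at once from the Hilbert space projection theorem; this route would simultaneously reprove uniqueness.
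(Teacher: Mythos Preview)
Your argument is correct. Both the maximum-principle uniqueness and the Dirichlet-principle existence (whether via the direct method or, more elegantly, via the Hilbert projection with the equivalent inner product $\langle g,h\rangle_\ast = q(g,h) + g(x_0)h(x_0)$) go through as written.

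The paper takes a different, more hands-on route: it decomposes $G\setminus A$ into its connected components $O_i$, observes that each $\partial O_i$ is a \emph{finite} set, applies Proposition~\ref{Prop: Harmonic extension} on the closed subgraph $\overline{O_i}$ with Dirichlet data $f|_{\partial O_i}$, and then glues the resulting edgewise-linear pieces back together with $f|_A$. This has the advantage of staying entirely within the finite-dimensional minimization already established, and it makes the explicit edgewise-linear structure of $\widetilde f$ on $G\setminus A$ visible. On the other hand, it tacitly uses that a connected open subset of a finite metric graph has finite boundary, and when $A$ is complicated (say a Cantor set on an edge) there are infinitely many $O_i$, so one must still check that the glued object lies in $H^1(G)$. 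Your global variational argument sidesteps both of these points: closedness of $\mathcal F_0$ follows immediately from the embedding $H^1(G)\hookrightarrow C(G)$, and the minimizer is produced in $H^1(G)$ in one stroke, with uniqueness coming for free from the projection theorem.
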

		
\begin{proof}
Consider the connected components $O_i$, $i \in I$, of $G \setminus A$. Since the topological boundary $\partial O_i$ is finite, applying Proposition~\ref{Prop: Harmonic extension} to $\overline O_i = O \cup \partial O_i$, we find a unique function $f_i \in H^1(\overline O_i)$ with $f_i = f$ on $\partial O_i$. Glueing together the functions $f_i$ on $O_i$, $i \in  I$, with $f$ on $A$, we arrive at a unique function $\widetilde{f}$ with the above properties.
\end{proof}

When considering harmonic functions, we will often simplify complicated graphs using the following proposition, which corresponds to parallel series in physics.

\begin{defn}[Pumpkins] \label{def: pumpkins}
		Let $n\in \N$. An \emph{$n$-pumpkin} is a metric graph consisting of two vertices joined by $n$ distinct edges.
		
		Let $m\in \N$ and $n_1, \dots, n_m \in \N$. A \emph{$[n_1,\dots, n_m]$-pumpkin chain} is a metric graph with $m + 1$ vertices $x_0, x_1, \dots, x_m$ and $n_1 + n_2 + \dots + n_m$ edges, out of which $n_i$ join $x_{i-1}$ and $x_{i}$ for all $i=1, \dots, m$. 
\end{defn}

\begin{prop}[Parallel series for pumpkin subgraphs] \label{prop: surgery}
	Let $G=(V,E, \ell)$ be a metric graph and $f \in H^1(G)$ harmonic on an open subset $O\subset G$.
	Suppose $ \overline{O}$ contains two vertices $x_1, x_2$ and $n$ edges $e_1, \dots e_n \subset O$ between $x_1 $and $x_2$.

	Let $G'$ be the graph obtained by replacing the $n$ edges by a new edge $e$ connecting $x_1$ and $x_2$ with length $\ell'(e)=\left(\sum_{i=1}^{n} \ell(e_i)^{-1}\right)^{-1}$. Denote by $\Phi\colon G \to G'$ the natural surjective map, obtained by linearly dilating every edge $e_i$, $i = 1, \dots, n$, onto $e$.
	
	Then there exists a unique function $g$ on $G'$ with $g \circ \Phi = f$. Moreover, $g$ is harmonic on $\Phi(O)$ and $q(g) = q(f)$. 
\end{prop}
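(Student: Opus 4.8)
The plan is to define $g$ on each edge of $G'$ by transporting $f$ along the dilation $\Phi$, check that this is well-defined (i.e.\ that the $n$ parallel edges all produce the same function on the single new edge $e$), verify $g \in H^1(G')$, and finally compute the energy.

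First I would observe that on the complement of the pumpkin, $\Phi$ is a local isometry, so $g := f \circ \Phi^{-1}$ is unambiguously defined there and inherits $H^1$-regularity and harmonicity from $f$. The only issue is the new edge $e$. On each $e_i$, since $f$ is harmonic on $O \supset e_i$ and $e_i$ is an interval, Lemma~\ref{Prop: characterization harmonicity} shows $f$ is linear on $e_i$; hence $f|_{e_i}$ is the affine interpolation between the boundary values $f(x_1)$ and $f(x_2)$. Under the linear dilation $\Phi$ from $e_i$ (of length $\ell(e_i)$) onto $e$ (of length $\ell'(e)$), this affine function pushes forward to the \emph{same} affine function on $e$ interpolating between $f(x_1)$ and $f(x_2)$, independently of $i$. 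This is exactly why $g$ is well-defined: each parallel edge carries the same endpoint data and the same (linear) profile up to reparametrization. Continuity of $g$ at $x_1,x_2$ follows since all pieces agree on the shared endpoint values, so $g \in H^1(G')$.

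For the energy identity, I would compute the contribution of the pumpkin directly. On $e_i$ the function $f$ is linear with slope $(f(x_2)-f(x_1))/\ell(e_i)$, so
\[
q(f)\big|_{e_i} = \int_{e_i} |f'|^2\,dx = \frac{(f(x_2)-f(x_1))^2}{\ell(e_i)}.
\]
Summing over $i=1,\dots,n$ gives a total pumpkin energy of $(f(x_2)-f(x_1))^2 \sum_{i=1}^n \ell(e_i)^{-1}$. On the single replacement edge $e$, $g$ is linear between the same endpoint values over length $\ell'(e) = \big(\sum_{i=1}^n \ell(e_i)^{-1}\big)^{-1}$, so its energy is
\[
q(g)\big|_{e} = \frac{(f(x_2)-f(x_1))^2}{\ell'(e)} = (f(x_2)-f(x_1))^2 \sum_{i=1}^{n} \ell(e_i)^{-1},
\]
matching exactly. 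Since $\Phi$ is an isometry elsewhere, the energies on $G \setminus (e_1 \cup \dots \cup e_n)$ and $G' \setminus e$ coincide trivially, yielding $q(g)=q(f)$. Harmonicity of $g$ on $\Phi(O)$ then follows from Lemma~\ref{Prop: characterization harmonicity}: at interior points $g$ is linear, and at the vertices $x_1,x_2$ the Kirchhoff (zero-flux) condition for $g$ reduces to the single term $(g(x_1)-g(x_2))/\ell'(e)$ plus the fluxes along the unchanged edges, which equals the corresponding sum for $f$ by the computation above; at every other vertex of $\Phi(O)$ the condition is unchanged.

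I expect the only genuinely delicate point to be the well-definedness of $g$ on $e$, i.e.\ confirming that the harmonicity of $f$ forces all $n$ parallel edges to carry identical (affine) data so the push-forward does not depend on the choice of $e_i$; the continuity across $x_1,x_2$ and the $H^1$-membership are then routine, and the energy matching is the elementary resistances-in-parallel computation above.
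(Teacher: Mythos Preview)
Your proof is correct and follows essentially the same route as the paper: both define $g$ to agree with $f$ off the pumpkin and to be the linear interpolant on the new edge $e$, use Lemma~\ref{Prop: characterization harmonicity} to see that $f$ is linear on each $e_i$ (so that $g\circ\Phi=f$ is well-defined), verify the Kirchhoff condition at $x_1,x_2$ via the identity $g_e'(x_1)=(f(x_2)-f(x_1))/\ell'(e)=\sum_i (f(x_2)-f(x_1))/\ell(e_i)=\sum_i f_{e_i}'(x_1)$, and compute the energy on the pumpkin as the parallel-resistance formula $(f(x_2)-f(x_1))^2\sum_i \ell(e_i)^{-1}$. The only cosmetic difference is that the paper defines $g$ first and then checks $g\circ\Phi=f$, whereas you push $f$ forward along $\Phi$ and check consistency across the $n$ edges; these are the same argument.
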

\begin{proof}
By construction, we can identify $G' \setminus e$ with $G \setminus (e_1 \cup \dots \cup e_n)$. Let $g \in H^1(G')$ be the unique function on $G'$ such that $g = f$ on $G' \setminus e$ and $g$ is linear on $e$. Since $f$ is linear on each edge $e_i$ by Lemma~\ref{Prop: characterization harmonicity}, we obtain that $g \circ \Phi = f$, which proves existence of $g$. Uniqueness follows by a similar argument.

The harmonicity of $g$ on $\Phi(O)$ follows from Lemma~\ref{Prop: characterization harmonicity} and the fact that
\[
g_e'(x_1)= \frac{f(x_2) - f(x_1)}{\ell'(e)} = \sum_{i=1}^n \frac{f(x_2) - f(x_1)}{\ell(e_i)} = \sum_{i=1}^n f_{e_i}'(x_1). 
\]
To compare the energies, note first that
	\begin{align*}
		q(g|_e) = \frac{|f(x_1) - f(x_2)|^2}{\ell'(e)} &= |f(x_1) - f(x_2)|^2 \sum_{i=1}^{n} \frac{1}{\ell(e_i)} = q(f|_{e_1\cup \dots \cup e_n}).
	\end{align*}\\
	Since $g = f$ on $  G' \setminus e   \cong G \setminus (e_1 \cup \dots \cup e_n) $, we conclude that $q(g) = q(f)$.
\end{proof}
		
\section{Laplacians on metric graphs with measures}	 \label{sec:LaplaciansMeasures}
In the following, we discuss how to associate a Laplacian to a graph equipped with a measure. Throughout this section, we let $G = (V,E, \ell)$ be a metric graph.
\subsection{Measures and Sobolev spaces on $G$}
Let 
		\[
		\cM:= \left\{\mu \text{ Borel measure on } G \; | \; \mu(G)=\int_{G}d\mu<\infty \right\}
		\] 
		be the set of all Borel measures on $G$ having finite mass and
		\[
		\cN := \left\{\mu\in \cM \; | \; \mu(G)=1 \right\} \subset \cM
		\]
		the subset consisting of all probability measures. The support of a measure $\mu \in  \cM$ is denoted by $\supp \mu$.
		
		Note that we can identify $\mu  \in \cM$ with the positive linear functional $\varphi_\mu\colon C(G) \to \R$, $f \mapsto \int_G f d\mu$. This allows to view $\cM$ as a subset of the dual $(C(G)^\ast, \|\cdot\|_{C(G)^\ast})$ of $(C(G), \|\cdot\|_{\infty})$. In the following, we endow $\cM$ with the weak-$\ast$ topology. By the Riesz–Markov–Kakutani representation theorem, we have $\|\mu\|_{C(G)^\ast} = \mu(G)$ for all $\mu \in \cM$. In particular, by the Banach–Alaoglu theorem, $\cN$ with the above topology is compact.

		A first observation is the following inequality.
		\begin{lem}\label{Prop: Nicaise}
			Let $G$ be a metric graph and $\mu \in \cM$. Suppose $f\in H^1(G)$ has at least one zero. Then 
			\begin{equation}\label{eq: Dirichlet Nicaise}
				q(f) \diam(G) \mu(G)\geq \|f\|_\infty^2 \mu(G)\geq \int_G |f|^2 d\mu,
			\end{equation}
			where $\diam(G)= \diam_\varrho(G)$ is the diameter of $G$ with respect to the path metric $\varrho$. 
		\end{lem}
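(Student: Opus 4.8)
The plan is to treat the two inequalities separately, the second being immediate and the first resting on a standard Sobolev-type estimate along a shortest path.

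For the right-hand inequality $\|f\|_\infty^2\, \mu(G) \ge \int_G |f|^2 \, d\mu$, I would simply use the pointwise bound $|f(x)|^2 \le \|f\|_\infty^2$, valid for every $x \in G$ since $f$ is continuous on the compact space $G$ and hence bounded with its supremum attained. Integrating this bound against $\mu$ yields
\[
\int_G |f|^2 \, d\mu \le \|f\|_\infty^2 \int_G d\mu = \|f\|_\infty^2\, \mu(G).
\]

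For the left-hand inequality it suffices to prove $q(f)\,\diam(G) \ge \|f\|_\infty^2$; the nonnegative factor $\mu(G)$ may then be multiplied through, which also covers the trivial case $\mu(G)=0$. By continuity and compactness, fix a point $x_1 \in G$ with $|f(x_1)| = \|f\|_\infty$, and let $x_0 \in G$ be a zero of $f$, which exists by hypothesis. Choose a shortest path $\gamma$ from $x_0$ to $x_1$; its arc length equals $\varrho(x_0,x_1) \le \diam(G)$. Since $f$ restricts to an $H^1$, hence absolutely continuous, function on each edge and is globally continuous across vertices, it is absolutely continuous along $\gamma$, and the fundamental theorem of calculus gives
\[
\|f\|_\infty = |f(x_1) - f(x_0)| = \Big| \int_\gamma f'(s) \, ds \Big| \le \int_\gamma |f'(s)| \, ds.
\]
Applying Cauchy--Schwarz and then bounding the energy along $\gamma$ by the total energy, I obtain
\[
\int_\gamma |f'(s)| \, ds \le \sqrt{\ell(\gamma)}\left( \int_\gamma |f'(s)|^2 \, ds \right)^{1/2} \le \sqrt{\diam(G)}\,\sqrt{q(f)}.
\]
Squaring and multiplying by $\mu(G)$ completes the argument. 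This is exactly the estimate already carried out inside the proof of Proposition~\ref{Prop: Harmonic extension}.

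The only point requiring a little care --- and the closest thing to an obstacle in an otherwise routine argument --- is the step $\int_\gamma |f'(s)|^2 \, ds \le q(f) = \int_G |f'|^2 \, dx$. This uses that a shortest path $\gamma$ is simple, so that it traverses each edge of $G$ at most once and without overlap; hence integrating $|f'|^2$ over $\gamma$ is dominated by integrating over all of $G$. I would spell out this covering observation explicitly, as it is what legitimizes passing from the line integral along $\gamma$ to the full energy $q(f)$.
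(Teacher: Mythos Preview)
Your proof is correct and follows essentially the same approach as the paper: the paper also dismisses the second inequality as obvious and proves the first by connecting a point where $|f|$ attains its maximum to a zero via a shortest path, then applying the fundamental theorem of calculus and Cauchy--Schwarz. Your added justification that a shortest path is simple (so that $\int_\gamma |f'|^2\,ds \le q(f)$) is a nice clarification that the paper leaves implicit.
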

		\begin{proof}
			The second inequality is obvious. To prove the first inequality, fix $x_0\in G$ with $|f(x_0)| = \|f\|_\infty$. Let $\gamma \subset G$ be a shortest path connecting $x_0$ with an arbitrary zero of $f$. Since $\gamma$ has length smaller than $\diam(G)$, we obtain
			\[
			\|f\|_\infty^2= |f(x_0)|^2 = \left|\int_\gamma f'(s)ds\right|^2 \leq \diam(G) \int_{G} |f'|^2 dx  =  \diam(G) q(f). \qedhere
			\]
		\end{proof}

		For $\mu \in \cM$, we let $L^2(G, \mu)$ be the corresponding $L^2$-space consisting of (equivalence classes of) measurable functions $f \colon G \to \R$ such that
		\[
		\|f\|^2_{L^2(G,\mu)} := \int_{G}|f|^2d\mu < \infty.
		\]
		Furthermore, on the Sobolev space $H^1(G)$ we define the scalar product
		\[
		(f,g)_{H^1, \mu} := \int_G fg \, d\mu + \int_{G} f'g' d x =(f,g)_{L^2(G, \mu)} + q(f,g).
		\]
		The corresponding norm is denoted by $\|\cdot \|_{H^1, \mu}$.

		The next lemma ensures that these different norms on $H^1(G)$ are all equivalent:

		\begin{lem} \label{lem:EquivalenceH^1Norms} Consider a metric graph $G$.
		\begin{itemize}
		\item [(i)] Let $\mu_1, \mu_2 \in \cM$. Then
			\begin{equation} \label{eq:EquivalenceH^1NormsAuxiliary}
			\frac{1}{\sqrt{\mu_1(G)}} \|f\|_{L^2(G, \mu_1)} \le \frac{1}{\sqrt{\mu_2(G)}}  \|f\|_{L^2(G,\mu_2)} + \sqrt{\diam (G) q(f)}
			\end{equation}
		for all $f \in H^1(G)$.
			
			\item [(ii)] There exists a uniform constant $C >0$ such that for all $\mu \in \cN$ the estimate
			\[
			\frac{1}{C} \|f \|_{H^1, \mu} \le \|f\|_{H^1} \le C \|f\|_{H^1, \mu}, \qquad f \in H^1(G),
			\]
			holds.
			\item [(iii)] $(H^1(G), (\cdot, \cdot)_{H^1, \mu})$ is a Hilbert space for all $\mu \in \cM$.
			\end{itemize}
			\end{lem}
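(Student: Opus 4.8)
The plan is to prove the three parts in order, bootstrapping (ii) from (i) and (iii) from (ii); the only genuinely new estimate is (i), and everything rests on Lemma~\ref{Prop: Nicaise}.

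For (i), the obstacle is that Lemma~\ref{Prop: Nicaise} applies only to functions having a zero, whereas $f \in H^1(G)$ is arbitrary. I would remove this by re-centering $f$. Set $c := \frac{1}{\mu_2(G)}\int_G f\, d\mu_2$, the $\mu_2$-average of $f$. Since $G$ is compact and connected and $f$ is continuous, the range $f(G)$ is the interval $[\min_G f, \max_G f]$, and $c$ lies in this interval; hence $g := f - c$ vanishes at some point. Because $g' = f'$, we have $q(g) = q(f)$, so Lemma~\ref{Prop: Nicaise} applied to $g$ with the measure $\mu_1$ gives $\|g\|_{L^2(G,\mu_1)} \le \sqrt{\diam(G)\, q(f)\,\mu_1(G)}$. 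Writing $f = g + c$, using the triangle inequality together with $\|1\|_{L^2(G,\mu_1)} = \sqrt{\mu_1(G)}$, and bounding $|c| \le \frac{1}{\sqrt{\mu_2(G)}}\|f\|_{L^2(G,\mu_2)}$ by Cauchy--Schwarz, I obtain
\[
\|f\|_{L^2(G,\mu_1)} \le \sqrt{\diam(G)\, q(f)\, \mu_1(G)} + \frac{\sqrt{\mu_1(G)}}{\sqrt{\mu_2(G)}}\,\|f\|_{L^2(G,\mu_2)};
\]
dividing by $\sqrt{\mu_1(G)}$ yields (i). The main insight is precisely the choice of $c$ as the $\mu_2$-mean, which simultaneously forces a zero (via the intermediate value theorem on the connected graph) and produces the term $\frac{1}{\sqrt{\mu_2(G)}}\|f\|_{L^2(G,\mu_2)}$.

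For (ii), I would apply (i) twice: once with $(\mu_1,\mu_2) = (\mu, dx)$ and once with the roles reversed, where $dx$ has total mass $\mu_2(G) = L(G)$ and $\mu \in \cN$ has mass $1$. This bounds $\|f\|_{L^2(G,\mu)}$ in terms of $\|f\|_{L^2(G,dx)}$ and $\sqrt{q(f)}$, and vice versa. Squaring these (using $(a+b)^2 \le 2a^2 + 2b^2$) and adding $q(f)$ to both sides converts them into the two inequalities $\|f\|_{H^1}^2 \le C^2 \|f\|_{H^1,\mu}^2$ and $\|f\|_{H^1,\mu}^2 \le C^2\|f\|_{H^1}^2$. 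The crucial point is that the constants produced this way depend only on $L(G)$ and $\diam(G)$ --- both fixed features of $G$ --- and not on $\mu$; this is exactly where the normalization $\mu(G)=1$ enters and delivers a constant uniform over $\cN$.

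For (iii), I would invoke the already-recorded fact that $(H^1(G),(\cdot,\cdot)_{H^1})$ is a Hilbert space. For $\mu \in \cN$, part (ii) shows $\|\cdot\|_{H^1,\mu}$ and $\|\cdot\|_{H^1}$ are equivalent, and completeness transfers across equivalent norms, so $(H^1(G),(\cdot,\cdot)_{H^1,\mu})$ is complete; being also an inner-product space, it is a Hilbert space. For a general $\mu \in \cM$ of positive mass $m = \mu(G)$, I would normalize to $\tilde\mu = \mu/m \in \cN$ and observe that $\|f\|_{H^1,\mu}^2 = m\,\|f\|_{L^2(G,\tilde\mu)}^2 + q(f)$ differs from $\|f\|_{H^1,\tilde\mu}^2$ only by the factor $m$ in the $L^2$-term, so the two norms are equivalent; chaining with the previous step again yields completeness. (The degenerate case $\mu = 0$, where the form collapses to the seminorm $q$, is understood to be excluded.) No step here is expected to be difficult once (i) and (ii) are in hand.
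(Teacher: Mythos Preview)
Your proposal is correct and follows essentially the same approach as the paper: both subtract the $\mu_2$-mean so that the centered function has a zero, invoke Lemma~\ref{Prop: Nicaise}, and then derive (ii) and (iii) from (i) exactly as you describe. The only cosmetic difference is that the paper passes through $\|f\|_\infty$ (after first normalizing to $\mu_1,\mu_2\in\cN$) rather than applying the triangle inequality directly in $L^2(G,\mu_1)$; your route is arguably a bit cleaner.
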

		\begin{proof} Upon replacing measures $\mu\in\cM$ by their normalizations $\widetilde \mu = \mu/\mu(G)$, it suffices to prove the estimate~\eqref{eq:EquivalenceH^1NormsAuxiliary} for $\mu_1, \mu_2 \in \cN$. Then every function $f\in H^1(G)$ can be decomposed as $f=c\one_G + f_0$ with 
			\[
			c= \int_G f \, d \mu_2 \text{ and } f_0 \in H^1(G) \text{ such that } \int_{G} f_0 \, d \mu_2 = 0. 
			\]
		Since $f_0$ must have at least one zero, $\|f_0\|_\infty \leq \sqrt{q(f) \diam(G)}$ by Lemma \ref{Prop: Nicaise}. Then 
			\begin{align*}\label{L2muH1ineq}
					\|f\|_{L^2(G,\mu_1)} &\leq \|f\|_{\infty}\leq |c|+ \|f_0\|_{\infty} \leq \int_{G} |f| \, d\mu_2 + \sqrt{q(f)\diam(G)} \\
					&\leq \|f\|_{L^2(G, \mu_2)} + \sqrt{\diam(G)q(f)}.
				\end{align*}
			Choosing $\mu_2$ as the Lebesgue measure $dx$, one readily obtains the two-sided estimate in (ii). Finally, since $(H^1(G), (\cdot, \cdot)_{H^1})$ is a Hilbert space and  $\|\cdot\|_{H^1}$ and $\|\cdot\|_{H^1, \mu}$ are equivalent, the same holds for $(H^1(G), (\cdot, \cdot)_{H^1, \mu})$.
			\end{proof}

In the following, we want to consider $H^1(G)$ as a subspace of $L^2(G, \mu)$. If $\supp\mu \neq G$, two functions $f,g \in H^1(G)$ can differ on $G\setminus \supp \mu$ while the equality $f =g$ still holds in $L^2(G, \mu)$. To remedy this, we define the closed subspace
\[
H^1(G, \mu) := \{f \in H^1(G) \; | \;  f \text{ is harmonic on } G\setminus \supp \mu  \} \subset H^1(G).
\]
Equipped with $(\cdot, \cdot)_{H^1, \mu}$, this space becomes a Hilbert space. If $\supp \mu = G$, then $H^1(G, \mu ) = H^1(G)$ as vector spaces.

\begin{defn}
	Let $\mu \in \cM$. The orthogonal projection from $(H^1(G), (\cdot, \cdot)_{H^1, \mu})$ onto the closed subspace $H^1(G, \mu) \subset H^1(G)$ is denoted by 
	\[
	\cP_\mu : H^1(G) \to H^1(G, \mu). 
	\] 
\end{defn}
Note that the image $\cP_\mu(f)$ of a function $f\in H^1(G)$ is precisely the unique solution of the problem \eqref{eq: harm extension clsd set} for $A=\supp \mu$.

\begin{lem}\label{lem:OrthoDecompositionH^1}
	Let $H^1_0(G \setminus \supp \mu) := \{f \in H^1(G) \; | \; f\big|_{ \supp \mu} \equiv 0  \}$. Then 
	\[
	H^1(G) = H^1(G, \mu) \oplus H^1_0(G \setminus \supp \mu)
	\]
	is an orthogonal decomposition regarding the scalar product $(\cdot, \cdot)_{H^1, \mu}$. Moreover, the decomposition is orthogonal with respect to $q(\cdot)$ and $(\cdot, \cdot)_{L^2(G,\mu)}$, meaning that
	\[
	q(f,g) = \int_G f g \, d\mu = 0
	\]
	for all $f \in H^1(G, \mu)$ and $g \in H^1_0(G \setminus \supp \mu)$.
\end{lem}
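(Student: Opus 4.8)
The plan is to establish the strong (energy and $L^2$) orthogonality first, since it follows essentially verbatim from the definitions, and then to deduce both the spanning property and the $(\cdot,\cdot)_{H^1,\mu}$-orthogonality of the decomposition from it, using the characterization of the projection $\cP_\mu$ as the solver of a Dirichlet problem.

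First I would check the two individual orthogonality relations for arbitrary $f \in H^1(G,\mu)$ and $g \in H^1_0(G \setminus \supp\mu)$. The $L^2$-relation $\int_G fg\, d\mu = 0$ is immediate: the measure $\mu$ is concentrated on $\supp\mu$, i.e.\ $\mu(G\setminus\supp\mu)=0$, while $g$ vanishes identically on $\supp\mu$, so the integrand $fg$ is zero $\mu$-almost everywhere. The energy relation $q(f,g)=0$ is nothing but the defining property of harmonicity: since $f$ is by definition harmonic on the open set $O = G\setminus\supp\mu$, and $g$ vanishes on $G\setminus O = \supp\mu$, the test function $g$ is exactly of the form admitted in the definition of harmonicity, whence $q(f,g)=0$. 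Adding the two gives $(f,g)_{H^1,\mu} = \int_G fg\, d\mu + q(f,g) = 0$, so the two subspaces are orthogonal with respect to $(\cdot,\cdot)_{H^1,\mu}$, and the displayed identities of the lemma are already proved at this point.

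Next I would verify that the two subspaces span $H^1(G)$. Given $f \in H^1(G)$, set $h := \cP_\mu f \in H^1(G,\mu)$. By the remark preceding the lemma, $h$ is the unique solution of the Dirichlet problem~\eqref{eq: harm extension clsd set} with $A=\supp\mu$; in particular $h|_{\supp\mu} = f|_{\supp\mu}$. Consequently $f-h$ vanishes on $\supp\mu$, that is, $f-h \in H^1_0(G\setminus\supp\mu)$, and $f = h + (f-h)$ is the desired decomposition, giving $H^1(G) = H^1(G,\mu) + H^1_0(G\setminus\supp\mu)$. Directness then follows from the orthogonality of the first step: if $f$ lies in both subspaces, taking $g=f$ there yields $\|f\|_{H^1,\mu}^2 = (f,f)_{H^1,\mu}=0$, hence $f=0$.

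I do not expect a genuine obstacle here, as the statement is close to a direct unwinding of definitions; the only points that require care are that one should assume $\supp\mu\neq\varnothing$ so that the Dirichlet problem and its uniqueness (Proposition~\ref{Prop: Unique continuation}) apply, and that one recognizes $q(f,g)=0$ as precisely the harmonicity condition built into $H^1(G,\mu)$ rather than something to be recovered by an integration by parts.
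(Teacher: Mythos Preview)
Your proposal is correct and follows essentially the same approach as the paper. The paper's own proof is extremely terse: it only records the two orthogonality relations (exactly as you argue them) and leaves the spanning and directness implicit, relying on the preceding definition of $\cP_\mu$ as the orthogonal projection together with the remark identifying $\cP_\mu$ with the harmonic-extension solver of~\eqref{eq: harm extension clsd set}; your write-up simply makes those implicit steps explicit.
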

\begin{proof}
	The orthogonality in the $L^2(G, \mu)$--scalar product is a direct consequence of the definition of $H^1_0(G \setminus \supp \mu)$. The orthogonality regarding the energy form follows from the definition of harmonicity on $G \setminus \supp \mu$. 
\end{proof}

Note that the natural map $f \mapsto [f]$, sending every continuous function $f$ to the corresponding equivalence class in $L^2(G, \mu)$, is injective from $H^1(G, \mu)$ to $L^2(G, \mu)$. In particular, we have a natural inclusion $H^1(G, \mu)  \subset L^2(G, \mu)$.

\begin{lem} \label{compembed}
	Let $\mu \in \cM$. Then the embeddings
	\[
	(H^1(G, \mu), \|\cdot\|_{H^1, \mu}) \hookrightarrow (C(G), \|\cdot\|_\infty)
	\]
	and
	\[
	(H^1(G, \mu), \|\cdot\|_{H^1, \mu}) \hookrightarrow (L^2(G,  \mu), \|\cdot\|_{L^2(G, \mu)})
	\]
	are compact.
\end{lem}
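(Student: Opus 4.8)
The plan is to reduce both assertions to a single compact embedding, namely $(H^1(G), \|\cdot\|_{H^1}) \hookrightarrow (C(G), \|\cdot\|_\infty)$. First I would invoke Lemma~\ref{lem:EquivalenceH^1Norms} to note that the norms $\|\cdot\|_{H^1}$ and $\|\cdot\|_{H^1, \mu}$ are equivalent on $H^1(G)$ (for a general $\mu \in \cM$ this follows after normalizing $\mu$ to a probability measure, as in the proof of that lemma). Consequently a sequence is bounded in $\|\cdot\|_{H^1, \mu}$ if and only if it is bounded in $\|\cdot\|_{H^1}$, so it suffices to establish relative compactness in $C(G)$ of $\|\cdot\|_{H^1}$-bounded sequences.

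The key step is to show that $(H^1(G), \|\cdot\|_{H^1}) \hookrightarrow (C(G), \|\cdot\|_\infty)$ is compact, which I would carry out with the Arzel\`a--Ascoli theorem. Let $(f_n) \subset H^1(G)$ satisfy $\|f_n\|_{H^1} \le M$. Since $G$ consists of finitely many edges, each isometric to a compact interval, the classical one-dimensional Sobolev embedding yields a uniform bound $\|f_n\|_\infty \le CM$. For equicontinuity, given $x, y \in G$ I would take a shortest path $\gamma$ realizing $\varrho(x,y)$ and estimate
\[
|f_n(x) - f_n(y)| = \left| \int_\gamma f_n'(s) \, ds \right| \le \sqrt{\varrho(x,y)} \, \sqrt{q(f_n)} \le \sqrt{\varrho(x,y)} \, M,
\]
which gives a uniform modulus of continuity (in fact uniform $\tfrac12$-H\"older continuity) for the whole sequence. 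Arzel\`a--Ascoli then produces a subsequence converging in $(C(G), \|\cdot\|_\infty)$.

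Since $H^1(G, \mu)$ is a closed subspace of $H^1(G)$, this immediately gives compactness of the first embedding $(H^1(G, \mu), \|\cdot\|_{H^1, \mu}) \hookrightarrow (C(G), \|\cdot\|_\infty)$. For the second embedding I would factor it as the composition
\[
(H^1(G, \mu), \|\cdot\|_{H^1, \mu}) \hookrightarrow (C(G), \|\cdot\|_\infty) \hookrightarrow (L^2(G, \mu), \|\cdot\|_{L^2(G, \mu)}),
\]
where the last map is bounded because $\|f\|_{L^2(G, \mu)} \le \sqrt{\mu(G)} \, \|f\|_\infty$ and $\mu(G) < \infty$. A compact operator composed with a bounded operator is compact, so the second embedding is compact as well. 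The only genuinely technical point is the Arzel\`a--Ascoli argument; here the continuity of functions across vertices, which is built into the definition of $H^1(G)$, is precisely what makes the H\"older estimate along shortest paths valid globally on $G$ rather than merely edge by edge.
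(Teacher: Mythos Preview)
Your proof is correct and follows essentially the same route as the paper: equivalence of the $\|\cdot\|_{H^1}$ and $\|\cdot\|_{H^1,\mu}$ norms, compactness of $H^1(G)\hookrightarrow C(G)$, and then factoring the $L^2(G,\mu)$ embedding through the bounded map $C(G)\to L^2(G,\mu)$. The only difference is that the paper simply cites the compactness of $H^1(G)\hookrightarrow C(G)$ as well-known (reducing to the interval case), whereas you supply the Arzel\`a--Ascoli argument explicitly.
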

\begin{proof}
The compactness of the embedding $(H^1(G), \|\cdot\|_{H^1}) \hookrightarrow (C(G), \|\cdot\|_\infty)$ is well-known (see, e.g., \cite[Theorem 8.8]{brezis} for the statement for intervals). Since the norms $\|\cdot\|_{H^1, \mu}$ and $\|\cdot\|_{H^1}$ are equivalent by Lemma~\ref{lem:EquivalenceH^1Norms}, the first claim follows.\\
Now consider the map $\iota$ sending each function $f \in (C(G), \|\cdot\|_{\infty})$ to its corresponding equivalence class $[f] \in (L^2(G, \mu), \|\cdot\|_{L^2(G, \mu)})$. Clearly $\iota$ is bounded and thus composing this map with the compact embedding $(H^1(G, \mu), \|\cdot\|_{H^1, \mu}) \hookrightarrow (C(G), \|\cdot\|_\infty)$, we obtain a compact operator. Since this operator is also injective (due to the definition of $H^1(G, \mu)$), it is in fact a compact embedding. 
\end{proof}

\subsection{Laplacians on metric graphs with measures} \label{ss:LaplacianBasics}

Viewing $H^1(G, \mu)$ as a subspace of $L^2(G,\mu)$, we consider $q\big|_{H^1(G, \mu)}$ as a quadratic form in $L^2(G,\mu)$.
	
	\begin{lem} The quadratic form $q\big|_{H^1(G, \mu)}$ is a densely defined, closed and non-negative form in $L^2(G,\mu)$.
	\end{lem}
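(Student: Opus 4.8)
The plan is to verify the three defining properties of a Dirichlet form for $q\big|_{H^1(G,\mu)}$ in $L^2(G,\mu)$: dense definition, closedness, and non-negativity. Non-negativity is immediate, since $q(f) = \int_G |f'|^2\,dx \ge 0$ for every $f \in H^1(G,\mu)$. The remaining two properties will both be deduced from the compact embeddings established in Lemma~\ref{compembed} together with the norm equivalence of Lemma~\ref{lem:EquivalenceH^1Norms}.

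For dense definition, I must show that $H^1(G,\mu)$, viewed through the inclusion $H^1(G,\mu) \hookrightarrow L^2(G,\mu)$, has dense image. First I would note that constant functions and, more generally, continuous functions lie densely in $L^2(G,\mu)$ by standard measure-theoretic density (continuous functions are dense in $L^2$ of a finite Borel measure on a compact metric space). It then suffices to approximate continuous functions in $L^2(G,\mu)$-norm by elements of $H^1(G,\mu)$. Since $H^1(G)$ is dense in $C(G)$ with respect to $\|\cdot\|_\infty$ (piecewise linear functions already suffice), and $\|\cdot\|_\infty$ dominates $\|\cdot\|_{L^2(G,\mu)}$ up to the factor $\sqrt{\mu(G)}$, one gets density of $H^1(G)$ in $L^2(G,\mu)$. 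Finally, the projection $\cP_\mu \colon H^1(G) \to H^1(G,\mu)$ leaves the equivalence class in $L^2(G,\mu)$ unchanged, because by Lemma~\ref{lem:OrthoDecompositionH^1} the complement $H^1_0(G\setminus\supp\mu)$ consists exactly of functions vanishing $\mu$-a.e.\ on $\supp\mu$ and hence representing $0$ in $L^2(G,\mu)$; so the $L^2(G,\mu)$-image of $H^1(G,\mu)$ equals that of $H^1(G)$, which is dense.

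For closedness, I would show that the form norm $\big(\|f\|_{L^2(G,\mu)}^2 + q(f)\big)^{1/2}$ coincides with $\|f\|_{H^1,\mu}$, and that $(H^1(G,\mu),(\cdot,\cdot)_{H^1,\mu})$ is complete, which was already recorded after the definition of $H^1(G,\mu)$ (it is a closed subspace of the Hilbert space $H^1(G)$). A form is closed precisely when its domain is complete in the form norm, so completeness of $H^1(G,\mu)$ in $\|\cdot\|_{H^1,\mu}$ gives closedness directly; the only subtlety is that the form lives in $L^2(G,\mu)$, so I must check that $\|\cdot\|_{H^1,\mu}$-Cauchy sequences converging in $L^2(G,\mu)$ actually converge to an element of the domain with matching form value. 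This follows because $\|\cdot\|_{H^1,\mu}$-convergence in $H^1(G,\mu)$ implies both $L^2(G,\mu)$-convergence and convergence of $q$, so the candidate limit lies in $H^1(G,\mu)$ and the form is lower semicontinuous.

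The main obstacle is the bookkeeping around the identification of $H^1(G,\mu)$ as a subspace of $L^2(G,\mu)$: one must be careful that the map $f \mapsto [f]$ is injective on $H^1(G,\mu)$ (noted just before Lemma~\ref{compembed}) so that the form norm and the Hilbert norm $\|\cdot\|_{H^1,\mu}$ genuinely agree, and that no information is lost on $G\setminus\supp\mu$. Once this identification is pinned down, closedness is essentially a restatement of the completeness of $H^1(G,\mu)$ and density is a short approximation argument, so I expect the proof to be brief.
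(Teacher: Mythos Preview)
Your proposal is correct and follows essentially the same approach as the paper: non-negativity is trivial, density goes via $C(G)$ (the paper invokes Lusin and Stone--Weierstrass, you invoke standard density and piecewise linear approximation), and closedness reduces to completeness of $H^1(G,\mu)$ in the $\|\cdot\|_{H^1,\mu}$-norm via Lemma~\ref{lem:EquivalenceH^1Norms}. The only cosmetic difference is that, for density, the paper reduces to the case $\supp\mu=G$ and then argues with $H^1(G)$ directly, whereas you work on all of $G$ and then apply $\cP_\mu$ together with Lemma~\ref{lem:OrthoDecompositionH^1} to pass from $H^1(G)$ to $H^1(G,\mu)$ without changing the $L^2(G,\mu)$-class; these are two ways of saying the same thing. (Your opening reference to Lemma~\ref{compembed} is not actually used in the argument and can be dropped.)
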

	\begin{proof}
	It is clear that $q\geq 0$. To show that $H^1(G, \mu)$ is dense in $L^2(G, \mu)$ it suffices to show the statement only on $\supp \mu$, i.e., we may assume that $\supp \mu =G$. But $C(G) \subset L^2(G)$ is dense in $\|\cdot\|_{L^2(G,\mu)}$-norm by Lusin's theorem, and $H^1(G) \subset C(G)$ is dense in $\|\cdot\|_{\infty}$-norm by the Stone-Weierstrass theorem, so the claim holds. Moreover, $q\big|_{H^1(G, \mu)}$ is closed by Lemma~\ref{lem:EquivalenceH^1Norms} and the fact that $H^1(G, \mu) \subset H^1(G)$ is a closed subspace. 
	\end{proof}
Consequently, $q$ generates a non-negative, self-adjoint operator
\begin{align*}
		H_\mu\colon \dom(H_\mu) \subset L^2(G, \mu) \longrightarrow L^2(G, \mu)
\end{align*}
defined by
\begin{gather*}
	f \in \dom(H_\mu) \text{ and } H_\mu f = g \, \Leftrightarrow \\ f \in H^1(G, \mu) \text{ and } q(f,\varphi) = \int_G g \varphi \, d\mu \text{ for all } \varphi \in H^1(G, \mu).
\end{gather*}
Using the orthogonal decomposition in Lemma~\ref{lem:OrthoDecompositionH^1}, one can prove that the equality
\begin{equation} \label{eq:RepresentationTheorem}
q(f,\varphi) = \int_G H_\mu f \varphi \, d\mu
\end{equation}
holds more strongly for all $f \in\dom(H_\mu)$ and $\varphi \in H^1(G)$. 

	\begin{lem}\label{lem:BasicsSpecHmu}
		Let  $\mu\in \cM$. Then $H_\mu$ has discrete spectrum and $\ker(H_\mu) =\Span(\mathbbm{1}_G)$.
	\end{lem}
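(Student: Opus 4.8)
The claim has two parts: $H_\mu$ has discrete spectrum, and its kernel is spanned by the constant function $\mathbbm{1}_G$. I will treat each part in turn.

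\medskip

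\textbf{Discreteness of the spectrum.} The plan is to leverage the compactness result already established in Lemma~\ref{compembed}. Since $H_\mu$ is a non-negative self-adjoint operator generated by the closed form $q\big|_{H^1(G,\mu)}$ in $L^2(G,\mu)$, its resolvent can be analyzed through the form domain $H^1(G,\mu)$. The standard criterion says that $H_\mu$ has purely discrete spectrum precisely when the embedding of the form domain (equipped with the graph/form norm, here $\|\cdot\|_{H^1,\mu}$) into the ambient Hilbert space $(L^2(G,\mu), \|\cdot\|_{L^2(G,\mu)})$ is compact. This is exactly the second compact embedding asserted in Lemma~\ref{compembed}. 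First I would invoke this embedding to conclude that the resolvent $(H_\mu + 1)^{-1}$ is a compact operator on $L^2(G,\mu)$, whence the spectrum of $H_\mu$ consists of isolated eigenvalues of finite multiplicity accumulating only at $+\infty$; that is, the spectrum is discrete.

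\medskip

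\textbf{Identification of the kernel.} For this I would argue that $f \in \ker(H_\mu)$ is equivalent to $q(f) = 0$ with $f \in H^1(G,\mu)$. Indeed, if $H_\mu f = 0$, then by the defining relation $q(f,\varphi) = \int_G (H_\mu f)\varphi\, d\mu = 0$ for all $\varphi \in H^1(G,\mu)$, and taking $\varphi = f$ gives $q(f) = 0$. Conversely, $q(f) = 0$ with $f$ in the form domain forces $H_\mu f = 0$ by non-negativity. Now $q(f) = \int_G |f'|^2\, dx = 0$ means that $f' \equiv 0$ on every edge, so $f$ is constant on each edge; since $f \in H^1(G)$ is continuous across vertices and $G$ is connected, $f$ must be globally constant, i.e.\ $f = c\,\mathbbm{1}_G$. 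It remains only to check that $\mathbbm{1}_G$ itself lies in the form domain $H^1(G,\mu)$: constants are continuous, lie in $H^1$ on each edge, and are trivially harmonic (the pointwise conditions of Lemma~\ref{Prop: characterization harmonicity} hold since $f'_e \equiv 0$), so $\mathbbm{1}_G \in H^1(G,\mu)$. This yields $\ker(H_\mu) = \Span(\mathbbm{1}_G)$.

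\medskip

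I expect neither part to present a serious obstacle, as both reduce to results already in place. The only point requiring mild care is the kernel computation: one must ensure that vanishing energy genuinely forces global constancy, which uses both the edgewise constancy from $f' \equiv 0$ and the \emph{continuity at vertices} built into the definition of $H^1(G)$ together with connectedness of $G$. A subtlety worth flagging is that $\ker(H_\mu)$ is one-dimensional in $L^2(G,\mu)$ only because distinct constants remain distinct in $L^2(G,\mu)$ (as $\mu(G) = 1 \neq 0$, or more generally $\mu \neq 0$), so the constant function does not collapse to the zero class. Thus the potential pitfall is not in the analysis but in confirming that the constant function survives as a nonzero element of $L^2(G,\mu)$, which is immediate from $\mu \in \cM$ being a nonzero measure.
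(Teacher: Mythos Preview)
Your proof is correct and follows essentially the same approach as the paper: discreteness via the compact embedding of Lemma~\ref{compembed}, and the kernel identification via $q(f)=0 \Rightarrow f'\equiv 0 \Rightarrow f$ constant by continuity and connectedness. Your write-up is in fact more detailed than the paper's, which dispatches both points in two sentences.
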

	\begin{proof}
		Discreteness of the spectrum is equivalent to the compactness of the embedding $H^1(G, \mu) \to L^2(G, \mu)$, which was proved in Lemma~\ref{compembed}. We have $f \in \ker H_\mu$ exactly when $f'\equiv 0$. Since $G$ is connected, this holds exactly for constant $f$. 
	\end{proof}

The class of Laplacians $H_\mu$, $\mu \in \cM$, contains the following examples.

\begin{ex}[Weighted discrete Laplacians]  \label{ex:DiscreteLaplacians} 
Let $\mu \in\cM$ be a measure of the form
\[
\mu = \sum_{v\in V} m(v) \delta_v
\]
where $\delta_v$ denotes the Dirac measure at $v \in V$ and $m\colon V\to (0, \infty)$ is a weight function. Since $\supp \mu  = V$, the corresponding $L^2$-space is finite dimensional and coincides with the space $\ell^2(V,m) =\{f \colon V\to \R \}$  equipped with the norm
\[
\|f\|_m = \Big ( \sum_{v \in V} |f(v)|^2 m(v)\Big )^{1/2}, \qquad \text{for } f \colon V\to \R.
\]
Under this identification, the operator $H_\mu$ has domain $\dom(H_\mu) =\ell^2(V, m)$ and acts on $f \in \ell^2(V,m)$ as a {\em weighted discrete Laplacian} given by

\begin{equation} \label{eq:weighted discrete Lap}
	(H_\mu f)(v)  =  \frac{1}{m(v)} \sum_{w \sim v}\sum_{e = vw} \frac{1}{\ell(e)} (f(v) - f(w)),  \qquad v \in V.
\end{equation}
\end{ex}

\begin{ex}[Weighted Dirichlet-to-Neumann operators on graphs] \label{ex:DtN} For a subset $B  \subset  V$ and a function $m\colon  B\to (0, \infty)$, consider the measure
	\[
	\mu = \sum_{b \in B} m(b) \delta_b.
	\]
	The $L^2$-space $L^2(G,\mu)$ corresponds to $\ell^2(B,m) =\{f \colon B\to \R \}$ with the norm
	\[
	\|f\|_m = \Big ( \sum_{b \in B} |f(b)|^2 m(b)\Big )^{1/2}, \qquad \text{for } f \colon B\to \R.
	\]
	The operator $H_\mu$ has domain $\dom(H_\mu) =\ell^2(B, m)$. By Proposition~\ref{Prop: Harmonic extension}, every function $f \in \ell^2(B,m)$ has a unique extension $\tilde f \in H^1(G)$ such that $f=\tilde f$ on $B$ and $\tilde f$ is harmonic on $G \setminus B$. In fact, $\tilde f$ is linear on every edge $e$ and satisfies $ \sum_{w \sim v}\sum_{e  = vw } \frac{1}{\ell(e)} (\tilde f (v) - \tilde f (w)) =0$ at every vertex $v \notin B$.  As follows from Proposition~\ref{Prop: Harmonic extension}, the image $H_\mu f$ is then given by
	\[
	(H_\mu f)(v) =   \frac{1}{m(v)} \sum_{w \sim v}\sum_{e \in vw} \frac{1}{\ell(e)} (\tilde f (v) - \tilde f (w)),  \qquad v \in B.
	\]
	Operators of this type are called {\em Dirichlet-to-Neumann operators on graphs} in the literature. Their eigenvalues are often referred to as {\em Steklov eigenvalues}. If $B = V$, then $H_\mu$ is a weighted discrete Laplacian from Example~\ref{ex:DiscreteLaplacians}. 
\end{ex}

\begin{rmk} \label{rem:DtNGeneral}
In Example~\ref{ex:DtN}, we can actually consider a general finite subset $B \subset G$. Namely, the condition $B \subset V$ always holds after choosing a model which contains $B$ in its vertex set $V$ (which can be done by suitably subdividing edges).
\end{rmk}

\begin{ex}[Weighted Kirchhoff Laplacians]\label{ex:WeightedKirchhoffLaplacian}
Fix a strictly positive function $m  \in L^1(G)$. Consider the measure $\mu = m(x) \,dx$ and the corresponding Laplacian $H_\mu \colon \dom(H_\mu) \subset L^2(G, \mu) \to  L^2(G, \mu)$. Introduce also the space
\[
H^2(G\setminus V, \mu) =  \bigoplus_{e \in E} \Big \{f_e \in  \mathrm{AC}([0, \ell(e)]) \colon   f_e' \in AC[0, \ell(e)], \, \frac{f_e''}{m} \in L^2((0, \ell(e)), m \,dx_e) \Big \}
\] 
where $\AC[0, \ell(e)]$ denotes the space of absolutely continuous functions and $dx_e$ the Lebesgue measure on an interval edge $[0, \ell(e)]$, $e \in E$. Then the operator $H_\mu$ acts edgewise as the weighted second derivative
\[
H_\mu f (x) =  -   \frac{1}{m(x)} f''(x)
\]
on the domain
\[
\dom(H_\mu) = \{f =(f_e)_{e \in E} \in H^2(G \setminus V, \mu)| \, \text{ $f$ satisfies \eqref{eq:kirchhoff} at every vertex $v \in V$} \},
\]
where the so-called {\em Kirchhoff conditions} at a vertex $v  \in V$ are given by
\begin{align}\label{eq:kirchhoff}
\begin{cases} f\ \text{is continuous at}\ v,\\[1mm] 
\sum\limits_{e \sim v} f_e'(v) = 0, \end{cases}
\end{align}
and we sum over all edges $e$ incident to $v$, which are all oriented towards $v$.

For $\mu = d x$, the operator $H_\mu =H_{dx}$ is the standard Kirchhoff Laplacian on $G$.
\end{ex}

\begin{ex}[Limits of shrinking manifolds] Consider a measure on $G$ of the form
\[
\mu = \sum_{e \in E} \mu_e dx_e + \sum_{v \in V} \mu_v \delta_v
\]
for coefficients $(\mu_e)_{e \in E}, (\mu_v)_{v \in V}\subset (0, \infty)$.

The corresponding Laplacian $H_\mu \colon \dom(H_\mu) \subset L^2(G, \mu)  \to L^2(G,\mu)$ has domain
\[
\dom(H_\mu) =  \{f\in L^2(G, \mu)| \, \text{$f$ is continuous and } f_e \in H^2((0, \ell(e)) \text{ for all edges $e$} \},
\]
where $H^2((a,b))$ is the second order Sobolev space on an interval $(a,b)$.

Moreover, $H_\mu$ maps $f \in \dom (H_\mu)$ to the function $H_\mu f \in L^2(G, \mu)$ given by
\[
(H_\mu f)(x) = \begin{cases}
- \frac{1}{\mu_e} f''(x), &\text{if $x$ lies in the interior of $e \in E$} \\
  \frac{1}{\mu_x} \sum_{e \sim x}  f_e'(x), &\text{if $x$ is a vertex}
\end{cases} 
\]
where we sum over the edges $e$ incident to $x$, which are all oriented towards $x$.

Such Laplacians appear as limits of Laplacians on shrinking manifolds, if different parts of the manifolds shrink at a specific relative speed, see~\cite[Section 7]{ep05} and \cite[Section 7.2.3]{post}.

\end{ex}


\section{Optimal eigenvalues on a metric graph} \label{sec:DefinitionEigenvalues}

Throughout this section, let $G = (V,E, \ell)$ be a metric graph. Consider a measure $\mu \in \cM$ and let $H_\mu$ be the associated Laplacian acting in $L^2(G, \mu)$. By Lemma~\ref{lem:BasicsSpecHmu}, the spectrum of $H_\mu$ consists of an increasing sequence of eigenvalues of finite multiplicity, which we enumerate, counting multiplicities, as
\[
\lambda_0(H_\mu) < \lambda_1(H_\mu) \le \lambda_2(H_\mu) \le \dots.
\]
If $\dim(L^2(G, \mu)) < \infty$ or, equivalently, the support of $\mu$ is finite, then this sequence is finite; otherwise, the sequence is infinite. For convenience, we set $\lambda_k(H_\mu) := \infty$ if $k \geq \dim(L^2(G, \mu))$ and index the sequence over $k$ in $\N_0 := \N\cup \{0\}$ in both cases.

We have already seen that $\lambda_0(H_\mu) = 0$ for all $\mu  \in \cM$ (Lemma \ref{lem:BasicsSpecHmu}). For $k \ge 1$, we have the variational characterizations
\[
\lambda_k(H_\mu) = \min\limits_{\substack{F \subset H^1(G,\mu) \\ \dim F = k+1}} \max\limits_{f \in F\setminus\{0\} } \;R_\mu(f),
\]
where $R_\mu(f) := {q(f)}/{\|f\|^2_{L^2(G;\mu)}}$ is the Rayleigh quotient of $f \in H^1(G, \mu)$, and
\[
\lambda_k(H_\mu) =\max\limits_{\substack{f_0, \dots, f_{k-1} \in H^1(G,\mu)}} \min\limits_{\substack{f \in H^1(G, \mu)\setminus\{0\} \\ f \perp_\mu f_0, \dots, f_{k-1}}} R_\mu (f).
\]
In particular, the first eigenvalue $\lambda_1(H_\mu)$ equals
\begin{equation} \label{eq:MinLambda1}
	\lambda_1(H_\mu) =\min_{\substack{f \in H^1(G,\mu) \\ f \neq  0,\; f \perp_{\mu} \one}} R_\mu (f) =  \min_{\substack{f \in H^1(G)\\ f \neq  0,\; f \perp_\mu \one}}R_\mu (f),
\end{equation}
where the second equality follows from Lemma~\ref{lem:OrthoDecompositionH^1}.

In the following, we are interested in maximizing and minimizing the $k$-th eigenvalue $\lambda_k(H_\mu)$ with respect to the measure $\mu$. The following continuity result provides the basis of our study.

\begin{thm}[Spectral continuity]\label{thm: spec conv }
	Let $(\mu_m)_{m \in\N} \subset \cM$ be a sequence of measures converging to a measure $\mu \in \cM$ in the weak-$\ast$ topology. Fix $k \in \N_{0}$. Then:
	\begin{enumerate}[label={(\roman*)},ref={(\roman*)}]
		\item \label{thm:SpectralConvergence1} $\lim_{m \to  \infty} \lambda_k(H_{\mu_m}) =  \lambda_k(H_\mu)$ in $[0, \infty]$.
		\item Assume that $\lambda_k(H_\mu) < \infty$, so that $\lambda_k(H_{\mu_m}) < \infty$ for all large $m$ by (i). Let $v^0_m, \dots, v^k_m$ be orthonormal eigenfunctions of $H_{\mu_m}$ for the eigenvalues $\lambda_0(H_{\mu_m}), \dots, \lambda_k(H_{\mu_m})$. Then, upon passing to a subsequence, $v^0_m, \dots, v^k_m$ converge uniformly on $G$ to orthonormal eigenfunctions $v^0, \dots, v^k$ of $H_\mu$ for the eigenvalues $\lambda_0(H_{\mu}), \dots, \lambda_k(H_{\mu})$.
	\end{enumerate}
	
\end{thm}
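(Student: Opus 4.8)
The plan is to transport the whole problem onto the fixed Hilbert space $H^1(G)$, where the energy form $q$ does not depend on $\mu$ and only the $L^2$-normalization varies. The starting point is that weak-$\ast$ convergence controls the denominators of the Rayleigh quotients on $H^1$-functions: since $H^1(G)\hookrightarrow C(G)$, every $f\in H^1(G)$ has $|f|^2\in C(G)$, so $\|f\|_{L^2(G,\mu_m)}^2=\int_G|f|^2\,d\mu_m\to\int_G|f|^2\,d\mu=\|f\|_{L^2(G,\mu)}^2$, and taking $f\equiv 1$ gives $\mu_m(G)\to\mu(G)$. First I would record the elementary upgrade that this convergence is uniform over compact subsets of $C(G)$ (finite $\epsilon$-net plus the uniform bound $\sup_m\mu_m(G)<\infty$). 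I would also reformulate the min--max characterization of $\lambda_k(H_\mu)$ over $(k+1)$-dimensional subspaces of the fixed space $H^1(G)$, with the convention $R_\mu(f)=+\infty$ when $\|f\|_{L^2(G,\mu)}=0$: the inequality ``$\le$'' is immediate since $H^1(G,\mu)\subset H^1(G)$, and ``$\ge$'' follows by projecting a test subspace with $\cP_\mu$, which preserves the $L^2(G,\mu)$-norm and does not increase the energy (Proposition~\ref{Prop: Harmonic extension}), the degenerate case $\dim\cP_\mu(F)<k+1$ being harmless because it forces the maximum of $R_\mu$ over $F$ to be $+\infty$.

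For the upper bound $\limsup_m\lambda_k(H_{\mu_m})\le\lambda_k(H_\mu)$ (assuming first $\lambda_k(H_\mu)<\infty$), I would fix a $(k+1)$-dimensional optimal subspace $F^\star\subset H^1(G,\mu)$ realizing $\lambda_k(H_\mu)$. On the $H^1$-unit sphere $S\subset F^\star$ the denominators $\|f\|_{L^2(G,\mu)}^2$ are bounded below (being a norm equivalent to $\|\cdot\|_{H^1}$ on the finite-dimensional $F^\star\subset H^1(G,\mu)$), while $\{|f|^2:f\in S\}$ is compact in $C(G)$; hence $R_{\mu_m}\to R_\mu$ uniformly on $S$, and using $F^\star$ as a competitor in the reformulated min--max for $H_{\mu_m}$ gives $\lambda_k(H_{\mu_m})\le\max_S R_{\mu_m}\to\lambda_k(H_\mu)$.

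The lower bound is the heart of the argument and simultaneously yields statement (ii). Along any subsequence the eigenvalues $\lambda_j(H_{\mu_m})$, $j\le k$, are bounded by the upper bound, so I pass to a further subsequence with $\lambda_j(H_{\mu_m})\to\ell_j$. Taking $L^2(\mu_m)$-orthonormal eigenfunctions $v^j_m$, the identity $q(v^j_m)=\lambda_j(H_{\mu_m})$ together with $\|v^j_m\|_{L^2(G,\mu_m)}=1$, $\mu_m(G)\to\mu(G)>0$ and Lemma~\ref{lem:EquivalenceH^1Norms}(i) bounds $(v^j_m)_m$ in $H^1(G)$. The compact embedding $H^1(G)\hookrightarrow C(G)$ (Lemma~\ref{compembed}) then yields, after a further subsequence, uniform limits $v^j_m\to v^j$ with $v^j_m\rightharpoonup v^j$ weakly in $H^1(G)$. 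Uniform convergence plus weak-$\ast$ convergence give $(v^i,v^j)_{L^2(G,\mu)}=\lim(v^i_m,v^j_m)_{L^2(G,\mu_m)}=\delta_{ij}$, so the $v^j$ are $L^2(\mu)$-orthonormal. Passing to the limit in the weak eigenvalue equation $q(v^j_m,\varphi)=\lambda_j(H_{\mu_m})\int_G v^j_m\varphi\,d\mu_m$, valid for all $\varphi\in H^1(G)$ by~\eqref{eq:RepresentationTheorem}, gives $q(v^j,\varphi)=\ell_j\int_G v^j\varphi\,d\mu$ for all $\varphi\in H^1(G)$; testing against $\varphi\in H^1_0(G\setminus\supp\mu)$ shows $v^j$ is harmonic off $\supp\mu$, i.e.\ $v^j\in H^1(G,\mu)$, whence $v^j$ is an eigenfunction of $H_\mu$ with eigenvalue $\ell_j$. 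Feeding the span of $v^0,\dots,v^k$ into the min--max for $H_\mu$ (and using $q(v^i,v^j)=\ell_j\delta_{ij}$) yields $\lambda_k(H_\mu)\le\ell_k$; combined with the upper bound $\ell_k\le\lambda_k(H_\mu)$, this forces $\ell_j=\lambda_j(H_\mu)$ along the subsequence. The subsequence principle then gives (i), and the extracted uniform limits are exactly the eigenfunctions required in (ii).

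Finally I would treat the excluded cases. If $\lambda_k(H_\mu)=\infty$ (i.e.\ $\dim L^2(G,\mu)\le k$), the same compactness extraction applied under the assumption $\limsup_m\lambda_k(H_{\mu_m})<\infty$ would produce $k+1$ orthonormal vectors in $L^2(G,\mu)$, a contradiction, so $\lambda_k(H_{\mu_m})\to\infty$. The degenerate limit $\mu(G)=0$ is handled directly: an eigenfunction for $\lambda_1$ has a zero, so Lemma~\ref{Prop: Nicaise} gives $\lambda_k(H_{\mu_m})\ge\lambda_1(H_{\mu_m})\ge(\diam(G)\,\mu_m(G))^{-1}\to\infty$. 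The main obstacle throughout is that the spaces $L^2(G,\mu_m)$ vary with $m$ and admit no common ambient Hilbert space; the resolution is precisely the transfer to the fixed space $H^1(G)$, where weak-$\ast$ convergence controls the $L^2(\mu_m)$-norms of $H^1$-functions and the compact embedding into $C(G)$ supplies the needed strong compactness of eigenfunctions. The delicate point is the correct index matching in the lower bound, which requires verifying that the limit functions are genuinely $L^2(\mu)$-orthonormal and actually lie in the form domain $H^1(G,\mu)$.
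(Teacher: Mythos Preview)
Your proof is correct and, in several respects, more streamlined than the paper's. The overall architecture is the same---an upper bound via test subspaces, a lower bound via compactness of eigenfunctions, and a separate treatment of the case $\lambda_k(H_\mu)=\infty$---but the implementations differ in two notable ways.

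First, the paper proceeds by induction on $k$: assuming uniform convergence of $v^0_m,\dots,v^{k-1}_m$, it builds an explicit test function $v_m=\cP_{\mu_m}(v)-\sum_{j<k}(\int v\,v^j_m\,d\mu_m)v^j_m$ for the upper bound and then extracts $v^k$ separately. You instead treat all indices $0,\dots,k$ at once by working with the full $(k+1)$-dimensional optimal subspace $F^\star$ and invoking uniform convergence of Rayleigh quotients on its (compact) unit sphere. This avoids the inductive bookkeeping and the projection $\cP_{\mu_m}$ entirely.

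Second, and more substantively, the mechanisms for identifying the limit $v^j$ as an eigenfunction differ. The paper uses lower semicontinuity of $q$ in $L^2(G,dx)$ to obtain $q(v^k)\le\liminf q(v^k_m)$, and then a separate chain of inequalities involving $\cP_\mu$ to force $q(v^k)=q(\cP_\mu v^k)$ and hence $v^k\in H^1(G,\mu)$. You instead extract weak $H^1$-convergence alongside the uniform convergence, which lets you pass to the limit directly in the weak eigenvalue equation $q(v^j_m,\varphi)=\lambda_j(H_{\mu_m})\int v^j_m\varphi\,d\mu_m$ for every fixed $\varphi\in H^1(G)$; harmonicity off $\supp\mu$ and the eigenfunction property then fall out in one stroke. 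This is arguably the cleaner route, as it bypasses both the semicontinuity argument and the comparison with $\cP_\mu v^k$. The price is that you must observe weak $H^1$-convergence (not just uniform convergence), but this is free from the $H^1$-boundedness you already have.
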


We stress that the convergence in Theorem~\ref{thm: spec conv } (i) holds in $[0, \infty]$, that is, we allow that $\lambda_k(H_\mu) =+ \infty$ or $\lambda_k(H_{\mu_m}) = + \infty$.

\begin{proof}
	We prove the statement by induction on $k$. The base case $k=0$ is trivial by Lemma~\ref{lem:BasicsSpecHmu}. Suppose the statement is true for $k-1$. We first assume that $\lambda_k(H_\mu) < \infty$. Then the support of $\mu$ must contain at least $(k+1)$ points. By the weak-$\ast$ convergence, the same must hold for the measure $\mu_m$ if $m$ is large. In particular, we must have $\lambda_k(H_{\mu_m})  < \infty$ for all large $m$.
	
	Fix $\mu_m$-orthonormal eigenfunctions $v^{0}_m,\dots,v^{k-1}_m$ of $H_{\mu_m}$ for the eigenvalues $\lambda_0(H_{\mu_m}), \dots, \lambda_{k-1}(H_{\mu_m})$. By the induction hypothesis, passing to a subsequence, we may assume that the functions $v^j_m$ converge uniformly on $G$ to $\mu$-orthonormal eigenfunctions $v^j$ of $H_\mu$ for the eigenvalues $\lambda_0(H_\mu), \dots \lambda_{k-1}(H_\mu)$.

	We first show that
	\begin{equation} \label{eq:ContinuityLimsup k}
		\limsup_{m \to \infty} \lambda_k(H_{\mu_m}) \le \lambda_k(H_\mu).
	\end{equation}
	Choose a function $v \in H^1(G, \mu)$ with $\lambda_k(H_\mu) = R_\mu(v)$ and $v\perp_{\mu} v^j$ for all $j<k$. For $m\in \N$, the function
	\[
	v_m := \cP_{\mu_m} (v) - \sum_{j=0}^{k-1} \left( \int_{G} v\cdot v_{m}^j d\mu_m\right) v_{m}^j
	\]
	belongs to $H^1(G, \mu_m)$ and satisfies $v_m \perp_{\mu_m} v_{m}^j $ for all $j< k$. In addition,
	\[
	q(v_{m}^j, v_{m}^i)  = \lambda_i(H_{\mu_m}) \delta_{i,j}
	\]
	and thus, using~\eqref{eq:RepresentationTheorem},
	\[
	q(v_m) = q(\cP_{\mu_{m}}(v)) - \sum_{j=0}^{k-1} \left(\int_{G}v \cdot v_{m}^j d\mu_{m}\right)^2\lambda_j(H_{\mu_{m}}) \le q(\cP_{\mu_{m}}(v)).
	\]
	Since $\mu_m$ converges to $\mu$ in the weak-$\ast$ topology and $v_{m}^j$ converges to $v^j$ uniformly, we have, as $m \to \infty$,
	\[
	\int_{G}v \cdot v_{m}^j d \mu_m \to \int_{G} v\cdot v^j d\mu = 0 \qquad \text{ for all } j < k.
	\]
	Moreover, with $\cP_{\mu_m}=\operatorname{id}$ in $L^2(G, \mu_m)$ and $q(\cP_{\mu_m} (\cdot)) \leq q(\cdot)$, we get
	\[
	\int_G v_m^2 \, d \mu_m \to  \int_G v^2 \, d\mu, \qquad   R_{\mu_m}(v_m) \leq \frac{q(v)}{\int_G v_m^2 \, d \mu_m} \to R_\mu(v)
	\]
	as $m \to \infty$. In particular,
	\[
	\limsup_{m \to \infty} \lambda_k(H_{\mu_m}) \le \limsup_{m \to \infty} R_{\mu_m}(v_m) \leq R_\mu(v) = \lambda_k(H_\mu)
	\]
	and \eqref{eq:ContinuityLimsup k} is proved.
	
	Next we prove that, up to restricting to a suitable subsequence, we also have
	\begin{equation} \label{eq:ContinuityLimin k}
		\lambda_k(H_\mu) \le \liminf_{m \to \infty} \lambda_k(H_{\mu_m}).
	\end{equation}
	Since a sequence $(a_n)_n \subset \R$ converges to $a \in \R$ if and only if every subsequence of $(a_n)_n$ has a subsequence on which the convergence holds, applying the above to arbitrary subsequences of $\mu_m$, this then implies that $\lim_{m \to \infty} \lambda_k(H_{\mu_m}) = \lambda_k(H_{\mu})$.
	
	Fix $\varepsilon>0$. Then \eqref{eq:ContinuityLimsup k} implies that
	\[
	q(v_{m}^k)=R_{\mu_m}(v_{m}^k) = \lambda_k(H_{\mu_{m}}) \le \lambda_k(H_\mu) + \varepsilon
	\]
	for all large $m$. In particular, the sequence $(v_{m}^k)_m$ is bounded in $(H^1(G), \|\cdot\|_{H^1})$ by Lemma~\ref{lem:EquivalenceH^1Norms}. Applying Lemma~\ref{compembed}, up to restricting to a subsequence, we can assume that $(v_m^k)_m$ converges uniformly to a function $v^k \in C(G)$.
	Using the definition of weak-$\ast$ topology, one readily obtains that
	\[
	1 = \lim_{m \to \infty} \int_G \left(v_{m}^k\right)^2 \, d\mu_m = \int_G \left(v^k\right)^2 \, d\mu
	\]
	and that, since $v_{m}^j$ converges uniformly to $v^j$ for all $j<k$,
	\[
	\qquad 0 = \lim_{m \to \infty} \int_G v_{m}^kv_{m}^j \, d\mu_m = \int_Gv^k v^j \, d\mu.
	\]
	We also have that $(v_m^k)_m$ converges to $v^k$ in $L^2(G) =L^2(G,dx)$. Moreover, since the quadratic form $q$ is closed in $L^2(G)$, it is lower semi-continuous in $L^2(G)$ (see, e.g., \cite[Theorem 1.2.1]{dav89}). Thus, we conclude that $v^k$ belongs to the form domain $\dom(q) = H^1(G)$ and
	\[
	q(v^k) \le \liminf_{m \to \infty} q(v_{m}^k).
	\]
	Combining the above statement, we obtain that
	\begin{align} \begin{split}
			\label{eq: proof continuity of eigenvalues}
			\liminf_{m \to \infty} \lambda_k(H_{\mu_m}) =  \liminf_{m \to \infty} \frac{q(v_{m}^k)}{\int_G \left(v_{m}^k\right)^2 \, d\mu_m}
			\ge \frac{q(v^k)}{\int_G \left(v^k\right)^2 \, d\mu} = R_\mu(v^k).
		\end{split}
	\end{align}
	Since $(v^0,\dots, v^k)$ are mutually orthogonal, by the min-max-principle it follows that $R_\mu(v^k) \geq R_\mu( \cP_\mu(v^k))  \geq \lambda_k(H_\mu)$ and hence \eqref{eq:ContinuityLimin k} holds. This finishes the proof of the claim in (i).
	
	The mutual orthogonality of $(v^0, \dots, v^k)$ as well as $\|v^k\|_{L^2(G,\mu)} = 1$ have already been proved. It remains to show that $v^k \in H^1(G,\mu)$ is an eigenfunction for $\lambda_k(H_\mu)$. Using the above and taking into account that $\cP_\mu v^k$ is orthogonal to $v^0, \dots, v^{k-1}$ in $L^2(G, \mu)$, we already have
	\[
	R_\mu(v^k) \leq \liminf_{m \to \infty} \lambda_k(H_{\mu_m}) = \lambda_k(H_\mu) \leq \frac{q(\cP_\mu v^k)}{\int_{G} (\cP_\mu v^k)^2 d\mu} \leq \frac{q(v^k)}{\int_{G} \left(v^k\right)^2 d\mu} = R_\mu (v^k).
	\]
	Thus $R_\mu(v^k) = \lambda_k(H_\mu)$. Further, the last inequality shows that $q(\cP_\mu v^k) = q(v^k)$. Since $\cP_\mu v^k = v^k$ on $\supp \mu$, we get $\cP_\mu v^k \equiv v^k$ and $v^k \in H^1(G, \mu)$ by Proposition~\ref{Prop: Harmonic extension}. Thus $H_\mu v^k = \lambda_k(H_\mu) v^k$, and $v^k$ is in fact a eigenfunction for $\lambda_k(H_\mu)$.

	To complete the proof, we treat the case that $\lambda_k(H_\mu) = + \infty$ or, equivalently, $\dim L^2(G,\mu) \leq k$. Assume there exists a sequence $(\mu_m)_m \subset \cM$ converging to $\mu$ with $\liminf_{m \to \infty} \lambda_k(H_{\mu_m}) < + \infty$. By the above argument, for a subsequence, the eigenfunctions $v_m^0, \dots, v_{m}^k$ converge uniformly to mutually orthogonal functions $v^0, \dots, v^k \in  L^2(G,\mu) $. But this contradicts the fact that $ \dim L^2(G,\mu)  \leq k$.
\end{proof}

In order to prevent scaling problems, we restrict to our optimization problems to measures in $\cN \subset \cM$. It turns out that the question of maximizing the $k$-th eigenvalue has a trivial answer:
\begin{cor} \label{cor:MaximumMeaningless}
	Let $\widetilde \cN$ be a dense subset of $\cN$. Then
	\[
	\sup_{\mu \in \widetilde \cN} \lambda_k(H_\mu) = + \infty \qquad \text{for all $k \ge 1$}.
	\]
\end{cor}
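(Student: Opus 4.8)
The plan is to produce a single measure $\mu_\ast \in \cN$ for which $\lambda_k(H_{\mu_\ast}) = +\infty$ for all $k \ge 1$, and then transfer this behavior to the dense set $\widetilde\cN$ using the spectral continuity of Theorem~\ref{thm: spec conv }. The obvious candidate is a Dirac mass: fix any point $x \in G$ and set $\mu_\ast := \delta_x \in \cN$. Since $\supp \mu_\ast = \{x\}$ consists of a single point, the space $L^2(G, \mu_\ast)$ is one-dimensional, and the indexing convention $\lambda_k(H_\mu) := +\infty$ for $k \ge \dim L^2(G, \mu)$ immediately gives $\lambda_k(H_{\mu_\ast}) = +\infty$ for every $k \ge 1$.

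It remains to approximate $\mu_\ast$ from within $\widetilde\cN$. Because $G$ is a compact metric space, $C(G)$ is separable, so bounded subsets of $C(G)^\ast$, and in particular $\cN$, are metrizable in the weak-$\ast$ topology. Hence density of $\widetilde\cN$ in $\cN$ furnishes a sequence $(\mu_m)_{m \in \N} \subset \widetilde\cN$ with $\mu_m \to \mu_\ast$ in the weak-$\ast$ topology. Applying Theorem~\ref{thm: spec conv }\ref{thm:SpectralConvergence1}, whose conclusion holds in $[0,\infty]$, we obtain $\lambda_k(H_{\mu_m}) \to \lambda_k(H_{\mu_\ast}) = +\infty$ for each fixed $k \ge 1$. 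Consequently $\sup_{\mu \in \widetilde\cN} \lambda_k(H_\mu) \ge \sup_{m} \lambda_k(H_{\mu_m}) = +\infty$, which is the claim.

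There is essentially no substantive obstacle here: the corollary is a direct consequence of the continuity statement combined with the existence of measures whose higher eigenvalues are infinite. The only points requiring care are that Theorem~\ref{thm: spec conv } is formulated so that the limit may legitimately equal $+\infty$ (which it is), and that weak-$\ast$ density yields an actual approximating sequence, guaranteed by the metrizability noted above. I remark that one could equally well use any measure supported on at most $k$ points to handle a fixed $k$, but a single Dirac mass disposes of all $k \ge 1$ at once and is the cleanest choice.
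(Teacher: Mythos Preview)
Your proof is correct and follows exactly the paper's approach: approximate a Dirac measure $\delta_x$ by a sequence from $\widetilde\cN$ and invoke Theorem~\ref{thm: spec conv } to conclude that the eigenvalues diverge. The only addition is your explicit justification, via metrizability of $\cN$, that density yields an approximating \emph{sequence}, which the paper leaves implicit.
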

\begin{proof}
	By density, $\widetilde \cN$ contains a sequence $(\mu_m)_m$ converging to a Dirac measure $\mu = \delta_x$. But then $\lambda_k(H_{\mu_m}) \to \lambda_k(H_\mu) = + \infty$ for $m \to \infty$ by Theorem~\ref{thm: spec conv }.
\end{proof}
Note that in Corollary~\ref{cor:MaximumMeaningless}, we can choose $\widetilde \cN$ as the set of probability measures with finite support, or having the form $\mu = f dx$ for a smooth function $f >0$.

On the other hand, the minimum of $\lambda_k(H_\mu)$ over $\cN$ exists by Theorem~\ref{thm: spec conv }.
\begin{defn}
	Let $G$ be a metric graph and $k \in \N_{0}$. The \emph{$k$-the optimal eigenvalue} of $G$ is defined by
	\[
	\lambda_k^{\min}(G) := \min_{\mu \in \cN } \lambda_k(H_\mu).
	\]
\end{defn}
As is clear from Lemma~\ref{lem:BasicsSpecHmu}, we have $\lambda_0^{\min}(G) = 0$ and $\lambda_k^{\min}(G) >0$ for all $k \ge 1$.

\begin{rmk} \label{rmk : Scaling property}
	Multiplying all edge lengths in $G$ by $c >0$, we obtain another metric graph $G_c$ of total length $L(G_c) = cL(G)$. It is not hard to see that
	\begin{equation}
		\lambda_k^{\min} (G) = c\lambda_k^{\min}(G_c).
	\end{equation}
	Thus, one can assume that $G$ has total length $L(G)=1$ when studying $ \lambda_k^{\min} (G)$.
\end{rmk}


\section{The first optimal eigenvalue} \label{sec:FirstEigenvalue}

In this section, we derive three different characterizations of the first optimal eigenvalue $\lambda_1^{\min}(G)$: in terms of two-point-measures, resistance metrics and graph partitions, respectively.

\subsection{Characterization via two-point-measures}
As it turns out, the first eigenvalue is minimized by a measure supported on two points. This reduces the analysis of the optimal eigenvalue to rather special measures.

\begin{thm} \label{thm: characerisation of lambda1}
	Let $G$ be a metric graph. Then there exists a measure $\mu$ of the form
	\begin{equation} \label{eq:FormMinimizingMeasure}
		\mu = \frac{1}{2}\delta_x + \frac{1}{2} \delta_y
	\end{equation}
	for two points $x\neq y \in G$ such that
	\[
	\lambda_1^{\min}(G) = \lambda_1(H_\mu).
	\]
	Moreover, every measure $\mu\in \cN$ with $\lambda_1^{\min}(G) = \lambda_1(H_\mu)$ is of the form~\eqref{eq:FormMinimizingMeasure} for some points $x\neq  y \in G$.
\end{thm}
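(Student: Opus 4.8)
The plan is to interchange the two minimisations in
\[
\lambda_1^{\min}(G)=\min_{\mu\in\cN}\ \min_{\substack{f\in H^1(G),\ f\neq0\\ \int_G f\,d\mu=0}}\frac{q(f)}{\int_G f^2\,d\mu}.
\]
For a fixed sign-changing $f$, the inner minimisation over $\mu$ is linear: since $q(f)$ is fixed, it amounts to maximising $\nu\mapsto\int_G f^2\,d\nu$ over the weak-$\ast$ compact convex set $K_f:=\{\nu\in\cN:\int_G f\,d\nu=0\}$. By the Bauer maximum principle this maximum is attained at an extreme point of $K_f$, and any extreme point of $\cN$ subject to the single affine constraint $\int_G f\,d\nu=0$ is supported on at most two points (a three-point support would admit a nonzero signed combination of the three Diracs preserving both total mass and the constraint, contradicting extremality). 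This is the mechanism that produces two-point measures.

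To obtain existence, I would first note that a minimiser $\mu_0$ exists by weak-$\ast$ compactness of $\cN$ and the continuity in Theorem~\ref{thm: spec conv }. Let $f_0$ be a first eigenfunction with $\int_G f_0^2\,d\mu_0=1$; since $\int_G f_0\,d\mu_0=0$, the function $f_0$ changes sign. Applying the first paragraph to $f_0$ yields a two-point $\mu_1=\alpha\delta_x+(1-\alpha)\delta_y$ with $\int_G f_0^2\,d\mu_1\geq1$ and $x\neq y$, so $\lambda_1(H_{\mu_1})\leq R_{\mu_1}(f_0)\leq q(f_0)=\lambda_1^{\min}(G)$ by~\eqref{eq:MinLambda1}, whence $\mu_1$ is itself a minimiser. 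Writing $r(x,y)^{-1}:=\min\{q(g):g(x)=1,\ g(y)=0\}$ for the effective resistance (Proposition~\ref{Prop: Harmonic extension}), a direct computation gives $\lambda_1(H_{\alpha\delta_x+(1-\alpha)\delta_y})=(\alpha(1-\alpha)\,r(x,y))^{-1}$; since $\alpha(1-\alpha)\leq1/4$ with equality only at $\alpha=\tfrac12$, minimality of $\mu_1$ forces $\alpha=\tfrac12$, proving the first claim.

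For the ``moreover'' part I would prove the rigidity by a sharp form of the same estimate. Let $\mu$ be any minimiser with normalised eigenfunction $f$ and set $M=\max_G f>0>m=\min_G f$. Applying the first paragraph to $\mu$ itself shows $\mu$ is optimal for its own $f$, so $\int_G f^2\,d\mu$ equals the value $-mM$ of the linear problem and $\lambda_1^{\min}(G)=q(f)/(-mM)$; moreover complementary slackness (the secant line $u\mapsto(M+m)u-mM$ touches $u^2$ only at $u\in\{m,M\}$) confines $\supp\mu$ to $f^{-1}(\{m,M\})$. Taking $x_M,x_m$ with $f(x_M)=M$, $f(x_m)=m$ and combining $q(f)\geq(M-m)^2/r(x_M,x_m)$ with $(M-m)^2\geq-4mM=(M+m)^2-(M-m)^2+\dots$, i.e. $(M+m)^2\geq0$, I get
\[
\lambda_1^{\min}(G)=\frac{q(f)}{-mM}\geq\frac{(M-m)^2}{-mM\,r(x_M,x_m)}\geq\frac{4}{r(x_M,x_m)}\geq\lambda_1^{\min}(G),
\]
where the last step holds because $\tfrac12(\delta_{x_M}+\delta_{x_m})$ is a competitor. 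Hence every inequality is an equality: $M=-m$, and $q(f)=(M-m)^2/r(x_M,x_m)$, the latter meaning by Proposition~\ref{Prop: Harmonic extension}(ii) that $f$ is harmonic on $G\setminus\{x_M,x_m\}$.

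It then remains to deduce $\supp\mu=\{x_M,x_m\}$. Taking $x_M,x_m$ as vertices after subdividing, Proposition~\ref{Prop: Harmonic extension}(iii) gives $q(f,\varphi)=c_M\varphi(x_M)+c_m\varphi(x_m)$ for all $\varphi\in H^1(G)$, with $c_M>0>c_m$; equating this with $\lambda_1^{\min}(G)\int_G f\varphi\,d\mu$ identifies the signed measure $f\,d\mu$ with $\lambda_1^{\min}(G)^{-1}(c_M\delta_{x_M}+c_m\delta_{x_m})$. Since $\supp\mu\subseteq f^{-1}(\{m,M\})$, the positive part of $f\,d\mu$ is $M\,\mu|_{\{f=M\}}$ and its negative part is $|m|\,\mu|_{\{f=m\}}$; comparing Jordan decompositions forces $\mu|_{\{f=M\}}$ and $\mu|_{\{f=m\}}$ to be single atoms at $x_M$ and $x_m$. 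Finally $M=-m$ together with $\int_G f\,d\mu=0$ gives equal weights, so $\mu=\tfrac12(\delta_{x_M}+\delta_{x_m})$. I expect the main obstacle to be precisely this rigidity step: a priori the level sets $\{f=M\}$ and $\{f=m\}$ carrying the mass may be large, and it is the sharp equality analysis feeding into the Jordan-decomposition comparison that collapses the support down to two points.
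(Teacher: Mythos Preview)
Your argument is correct and shares its skeleton with the paper's proof: starting from a minimiser $\mu$ with eigenfunction $f$, pass to a two-point measure in $K_f=\{\nu\in\cN:\int f\,d\nu=0\}$ that maximises $\int f^2\,d\nu$, and then optimise the weight using $\lambda_1(H_{\alpha\delta_x+(1-\alpha)\delta_y})=(\alpha(1-\alpha)\,r(x,y))^{-1}$. The genuine difference is in the rigidity step. The paper's Lemma~\ref{lem:AuxiliaryFormOfMinimizingMeasure} argues by contradiction via a local perturbation: if $s\in\supp\mu\setminus\{x,y\}$, then the eigenvalue equation forces $q(f,\varphi)\neq0$ for a bump $\varphi$ near $s$, while separately one shows $f$ is harmonic off $\{x,y\}$, giving $q(f,\varphi)=0$. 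Your route is more structural: complementary slackness in the linear programme $\max_{\nu\in K_f}\int f^2\,d\nu$ confines $\supp\mu$ to $f^{-1}(\{m,M\})$, the equality chain then forces $M=-m$ and harmonicity of $f$ off $\{x_M,x_m\}$, and the eigenvalue identity $f\,d\mu=\lambda^{-1}(c_M\delta_{x_M}+c_m\delta_{x_m})$ combined with uniqueness of the Jordan decomposition collapses each level set to a single atom. This is a clean alternative that makes the two-point conclusion fall out of measure-theoretic uniqueness rather than a perturbation.

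One expository point deserves tightening. In your third paragraph, ``applying the first paragraph to $\mu$ itself shows $\mu$ is optimal for its own $f$'' is not what Bauer gives you; Bauer only locates \emph{some} extreme maximiser. The correct justification is the direct one you are implicitly using: for any $\nu\in K_f$ one has $\lambda_1^{\min}(G)\le\lambda_1(H_\nu)\le q(f)/\int f^2\,d\nu$, whence $\int f^2\,d\nu\le\int f^2\,d\mu$, so $\mu$ itself attains the maximum (which the secant-line bound then identifies as $-mM$). Similarly, ``minimality of $\mu_1$ forces $\alpha=\tfrac12$'' in the second paragraph overstates what is available at that stage; what you actually have is that replacing $\alpha$ by $\tfrac12$ does not increase $\lambda_1$, which already yields existence.
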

Before proving Theorem~\ref{thm: characerisation of lambda1}, we introduce the following notation. For two points  $x \neq y \in G$, define the measure $\mu_{x,y} \in \cN$ as $\mu_{x,y} := \frac{1}{2}\delta_x + \frac{1}{2}\delta_y$ and the function $f_{x,y} \in H^1(G,\mu)$ as the unique solution of the equations
\begin{equation} \label{eq:Definitionfxy}
	\begin{cases}
		f(x)=1,    \\
		f(y) = -1, \\
		f \text{ harmonic}  \text{ on }G\setminus\{x,y\}.
	\end{cases}
\end{equation}
Then the function $f_{x,y}$ is the unique $\mu_{x,y}$-normalized eigenfunction (up to the sign) of the Laplacian $H_{\mu_{x,y}}$ for the eigenvalue $\lambda_1(H_{\mu_{x,y}})$. In terms of these notions, Theorem~\ref{thm: characerisation of lambda1} asserts that
\begin{equation} \label{eq:ExpressLamda1Fxy}
	\lambda_1^{\min} (G) = \min_{x \neq y \in G}  \lambda_1 (H_{\mu_{x,y}}) =\min_{x \neq y \in G} q(f_{x,y}).
\end{equation}

In order to prove Theorem~\ref{thm: characerisation of lambda1}, we first provide two lemmas.

\begin{lem} \label{lem:KeyLemma}
	Let $\mu \in \cN$ and let $f\colon G \to \R$ be a continuous function such that $\int_Gf \, d\mu =0$. Then:
	\begin{itemize}
		\item [(i)] $\|f\|_{L^2(G, \mu)}^2 \le |\max f| |\min f| $, where $\max f$ and $\min f$ denote the maximum and minimum of $f$, respectively.
		\item [(ii)]  There exist $x,y \in G$ and $\alpha \in (0,1)$ such that the measure $\nu = \alpha \delta_{x} + (1-\alpha) \delta_{y}$ satisfies
		\[
		\int_{G}|f|^2 \, d \nu = \int_G|f|^2 \, d\mu \qquad \text{and} \qquad \int_{G} f \, d\nu  = 0.
		\]
	\end{itemize}
\end{lem}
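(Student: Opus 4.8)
The plan is to treat the two parts separately, using only elementary pointwise inequalities together with the intermediate value theorem on the connected space $G$. Throughout, I write $M := \max f$ and $m := \min f$. Since $\mu$ is a probability measure with $\int_G f \, d\mu = 0$, the function $f$ must take a nonpositive and a nonnegative value on $\supp \mu$, which forces $M \ge 0 \ge m$; in particular $-Mm = |\max f|\,|\min f| \ge 0$.

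For part (i), I would start from the pointwise inequality $(M - f(z))(f(z) - m) \ge 0$, valid for every $z \in G$ because $m \le f \le M$. Integrating against $\mu$ and expanding the product gives
\[
0 \le \int_G \big(-f^2 + (M+m) f - Mm\big) \, d\mu = -\int_G f^2 \, d\mu + (M+m)\underbrace{\int_G f \, d\mu}_{=0} - Mm\underbrace{\mu(G)}_{=1},
\]
so that $\|f\|_{L^2(G,\mu)}^2 = \int_G f^2 \, d\mu \le -Mm = |\max f|\,|\min f|$, which is the claim.

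For part (ii), the key observation is that any mean-zero measure supported on two values carries a forced second moment. Concretely, for values $a \ge 0 \ge b$ with $a > b$, the weight $\alpha := \tfrac{-b}{a - b} \in (0,1)$ is the unique one making $\alpha a + (1-\alpha) b = 0$, and a short computation then yields $\alpha a^2 + (1-\alpha) b^2 = -ab$. I would therefore fix a point $x \in G$ with $f(x) = M$ (assuming the nontrivial case $M > 0$; otherwise $f \equiv 0$ $\mu$-a.e.\ and the statement is immediate) and seek a second point $y$ with $f(y) = c$, where $A := \|f\|_{L^2(G,\mu)}^2$ and $c := -A/M$. By part~(i) we have $0 \le A \le -Mm$, hence $c \in [m, 0] \subset [m, M]$; since $f$ is continuous and $G$ is connected, the intermediate value theorem provides such a $y$. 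Taking $a = M$, $b = c$ in the identity above, the measure $\nu = \alpha \delta_x + (1-\alpha)\delta_y$ then satisfies $\int_G f \, d\nu = 0$ by construction and $\int_G f^2 \, d\nu = -Mc = A = \int_G f^2 \, d\mu$, as required; when $A > 0$ one has $c < 0$, so $\alpha \in (0,1)$ and $x \neq y$.

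I expect part~(ii) to carry the real content: the non-obvious step is recognizing that matching both moments simultaneously collapses to the single scalar equation $-ab = A$ once the mean-zero constraint pins down the weight $\alpha$, and that the admissible range of this product is exactly $[0, -Mm]$. Part~(i) then functions as a lemma in service of (ii), guaranteeing that the target value $A$ lies in this range so that the intermediate value step never runs out of bounds.
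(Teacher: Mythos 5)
Your proof is correct and follows essentially the same route as the paper's: part (i) is the pointwise inequality $(\max f - f)(f - \min f)\ge 0$ integrated against $\mu$ (the paper phrases it via $\int(f-\min f)^2\,d\mu$, which is the same estimate), and in part (ii) your choice $f(y)=-\|f\|_{L^2(G,\mu)}^2/\max f$ together with $\alpha=-f(y)/(\max f - f(y))$ reproduces exactly the paper's points with $f(x)f(y)=-\|f\|^2_{L^2(G,\mu)}$ and weight $\alpha=1/(f(x)^2+1)$ after normalization. The only (harmless) loose end is the degenerate case $\|f\|_{L^2(G,\mu)}=0$ with $\max f>0$, where your formula gives $\alpha=0$; there one simply takes both points at a zero of $f$, a case the paper's normalization also glosses over.
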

\begin{proof}
	It suffices to prove the claims in case that $\|f\|_{L^2(G,\mu)} = 1$. Then
	\[
	\int_{G} (f- \min f)^2 d\mu = \int_{G} f^2 d \mu - 2 \int_{G} f \min f d \mu + \int_{G} (\min f)^2 d \mu = 1 + (\min f)^2.
	\]
	Since $f(x)-\min f \leq \max f - \min f$ for $ x \in G$, we further obtain the estimate
	\begin{align*}
		1 + (\min f)^2 & = \int_{G} (f- \min f )^2 d\mu \leq ( \max f  - \min f) \int_{G} f-\min f d\mu \\
		& = ( \max f  - \min f ) (- \min f) = - \min f \max f + (\min f)^2,
	\end{align*}
	which proves the inequality in (i).
	
	In order to prove the claim in (ii), note first that $\max f  > 0 > \min f$ since $\int_G f \, d\mu = 0$. Combining the inequality in (i) with the continuity of $f$, we infer that there exist points $x,y  \in G$ with $f(x)f(y)=-1$. Setting $\alpha := \frac{1}{f(x)^2+1} < 1$, we get
	\[
	\int_{G} f d \nu = \alpha f(x) + (1-\alpha) f(y) = \frac{f(x)}{f(x)^2+1} + \frac{f(x)^2 f(y)}{f(x)^2+1} = 0
	\]
	and
	\[
	\int_{G} |f|^2 d \nu = \alpha f(x)^2 + (1- \alpha) f(y)^2 = \frac{f(x)^2}{f(x)^2+1} + \frac{f(y)^2f(x)^2}{f(x)^2+1} = 1,
	\]
	which concludes the proof.

\end{proof}

\begin{rmk}
	Further analysis of the above proof shows that Lemma~\ref{lem:KeyLemma} generalizes from metric graphs to connected compact Hausdorff spaces.
\end{rmk}

\begin{lem} \label{lem:AuxiliaryFormOfMinimizingMeasure}
	Consider a measure $\mu \in \cN$ with $\lambda_1(H_\mu) = \lambda_1^{\min}(G)$. Let $f$ be an eigenfunction for the first eigenvalue $\lambda_1(H_\mu)$ and let $\nu$ be the measure from Lemma~\ref{lem:KeyLemma}. Then $\supp\mu = \supp\nu =\{x,y\}$.
	
\end{lem}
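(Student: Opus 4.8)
The plan is to treat the two-point measure $\nu$ supplied by Lemma~\ref{lem:KeyLemma} as a competitor, show that it is itself a minimizing measure sharing the eigenfunction $f$, and then compare the two eigenvalue equations to force $\mu$ onto the two atoms of $\nu$.

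First I would observe that $f$, being an eigenfunction for $\lambda_1(H_\mu) > \lambda_0 = 0$, is $\mu$-orthogonal to the constants, so $\int_G f\,d\mu = 0$ and Lemma~\ref{lem:KeyLemma} applies. It produces $\alpha \in (0,1)$ and points $x,y$ with $f(x)f(y) = -1$, so that $\nu = \alpha\delta_x + (1-\alpha)\delta_y$ satisfies $\int_G f\,d\nu = 0$ and $\|f\|^2_{L^2(G,\nu)} = \|f\|^2_{L^2(G,\mu)}$. Since $f(x)f(y) = -1$ the points are distinct and avoid the zero set of $f$; in particular $\supp\nu = \{x,y\}$.

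Next I would show that $\nu$ is also minimizing and that $f$ is one of its first eigenfunctions. As $f \in H^1(G)$ satisfies $f \perp_\nu \one$ and $f \neq 0$, the variational formula~\eqref{eq:MinLambda1} together with $\nu \in \cN$ gives the chain
\[
\lambda_1^{\min}(G) \le \lambda_1(H_\nu) \le R_\nu(f) = \frac{q(f)}{\|f\|^2_{L^2(G,\nu)}} = \frac{q(f)}{\|f\|^2_{L^2(G,\mu)}} = R_\mu(f) = \lambda_1^{\min}(G),
\]
so that equality holds everywhere. To upgrade ``$R_\nu(f) = \lambda_1(H_\nu)$'' to ``$f$ is an eigenfunction of $H_\nu$'', I would invoke the orthogonal decomposition of Lemma~\ref{lem:OrthoDecompositionH^1}: passing from $f$ to $\cP_\nu f$ leaves $\|f\|_{L^2(G,\nu)}$ unchanged and can only lower the energy, yet cannot lower $R_\nu$ below the minimum; hence the complementary part has vanishing energy and is therefore constant, so being zero on $\supp\nu$ it vanishes identically. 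This gives $f = \cP_\nu f \in H^1(G,\nu)$ and $H_\nu f = \lambda f$ with $\lambda := \lambda_1^{\min}(G)$.

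Finally I would compare the eigenvalue equations. Writing~\eqref{eq:RepresentationTheorem} for $H_\mu$ and for $H_\nu$ against an arbitrary $\varphi \in H^1(G)$ gives $\lambda\int_G f\varphi\,d\mu = q(f,\varphi) = \lambda\int_G f\varphi\,d\nu$; since $\lambda > 0$ and $H^1(G)$ is dense in $C(G)$, the signed measures coincide, $f\,d\mu = f\,d\nu = \alpha f(x)\delta_x + (1-\alpha)f(y)\delta_y$. Evaluating this identity on $\{x\}$ and on $\{y\}$ and using $f(x), f(y) \neq 0$ yields $\mu(\{x\}) = \alpha$ and $\mu(\{y\}) = 1-\alpha$, which already accounts for the full mass $\mu(G) = 1$. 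Hence $\mu(G \setminus \{x,y\}) = 0$, and since both atoms are positive we conclude $\supp\mu = \{x,y\} = \supp\nu$. The hard part is the middle step: converting the equality of Rayleigh quotients into the genuine eigenvalue equation for $H_\nu$, because $f$ is a priori only known to lie in $H^1(G)$ and not in $H^1(G,\nu)$. Once $f$ solves the $\nu$-eigenvalue equation, the measure comparison and the extraction of the atoms are routine.
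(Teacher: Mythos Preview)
Your proof is correct. The crucial step---showing that $f$ is harmonic on $G\setminus\{x,y\}$, i.e.\ $f\in H^1(G,\nu)$---is obtained exactly as in the paper, via the chain of equalities forcing $q(\cP_\nu f)=q(f)$ and then using the $q$-orthogonality of the decomposition in Lemma~\ref{lem:OrthoDecompositionH^1}.

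The concluding argument, however, is genuinely different. The paper proceeds by contradiction: assuming a point in $\supp\mu\setminus\{x,y\}$, it first runs an auxiliary argument (passing to a rescaled restricted measure $\hat\mu$) to locate a nearby point $s\in\supp\mu$ with $f(s)\neq 0$, then uses a localized test function $\varphi$ to produce $q(f,\varphi)\neq 0$, contradicting the harmonicity just established. Your route is more direct: once $f$ is an eigenfunction for both $H_\mu$ and $H_\nu$ with the same eigenvalue $\lambda>0$, equation~\eqref{eq:RepresentationTheorem} yields $\int_G f\varphi\,d\mu=\int_G f\varphi\,d\nu$ for all $\varphi\in H^1(G)$, and density in $C(G)$ gives the equality of signed measures $f\,d\mu=f\,d\nu$. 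Reading off the atoms (using $f(x),f(y)\neq 0$) then shows not only $\supp\mu=\{x,y\}$ but actually $\mu=\nu$, which is slightly more than the lemma asserts. Your argument thus bypasses both the contradiction setup and the auxiliary step locating a point where $f$ is nonzero; the price is the small extra verification that the Rayleigh-quotient minimizer is a genuine eigenfunction, which you handle correctly.
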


\begin{proof}
	Since $\lambda_1(H_\mu) = \lambda_1^{\min}(G) < \infty$, the support of $\mu$ must contain at least two points. Thus, it suffices to prove that $\supp\mu \subseteq \supp\nu$.
	
	Assuming the opposite, let $x \in G$ be a point which lies in $\supp\mu$, but not in $\supp \nu$. Then there exists a small open neighborhood $U$ of $x$ such that $\nu(U) =  0$ and $0  < \mu(U) < 1$ (here we use that $\supp\mu  \neq \{x \}$).
	
	We claim that there exists a point $s \in U  \cap \supp \mu$ such that $f(s)\neq 0$. Indeed, if $f \equiv 0$ on $ U  \cap \supp \mu$, then the measure $\hat \mu \in \cN$ defined as
	\[
	\hat\mu(A) = \frac{1}{\mu(G \setminus U )} \mu (A \setminus U  ), \qquad \text{$A \subset  G$ measurable},
	\]
	would satisfy $\mu(G \setminus U ) \|f\|^2_{\hat \mu}  =  \|f\|^2_{\mu}$ and
	\begin{align*}
		\lambda_1(H_{\hat \mu}) = \min_{\substack{g \in H^1(G) \\g \perp_{\hat \mu} \one}} \frac{q(g)}{\|g\|_{\hat\mu}^2 }
		\le  \frac{q(f)}{\|f\|_{\hat\mu}^2 } < \frac{q(f)}{\|f\|_{\mu}^2 }  = \lambda_1(H_\mu)  = \lambda_1^{\min} (G),
	\end{align*}
	a contradiction.
	
	Fix a test function $\varphi \ge 0 $ in $H^1(G)$ with  $\varphi(s) > 0$ and which is supported on a small neighborhood $W$ of $s $, such that $f(w) \neq 0$ for all $w \in W$. Taking into account~\eqref{eq:RepresentationTheorem} and the fact that $s \in \supp\mu$, we obtain that
	\[
	q(f, \varphi) = \int_G  \varphi H_\mu f \, d\mu = \lambda_1(H_\mu) \int_W \varphi f \, d\mu \neq 0.
	\]
	We finish the proof by showing that,  simultaneously,
	\[
	q(f, \varphi) =0,
	\]
	which provides the desired contradiction. Since $\varphi$ is supported on $G \setminus \supp \nu$ by construction, it is enough to prove that $f$ is harmonic on $G  \setminus \supp \nu$. Consider the projection map $\cP_\nu \colon H^1(G) \to  H^1(G, \nu)$ for the orthogonal decomposition $H^1(G) = H^1(G, \nu) \oplus H^1_0(G \setminus \supp \nu)$. Taking into account the properties of $\nu$ in Lemma~\ref{lem:KeyLemma} and Proposition~\ref{Prop: Harmonic extension}, we obtain that
	\begin{align*}
		\lambda_1^{\min}(G) =
		\lambda_1 (H_\mu) & = \frac{q(f)}{\|f\|_\mu^2} = \frac{q(f)}{\|f\|_\nu^2} =  \frac{q(f)}{\| \cP_\nu(f)\|_\nu^2} \\
		& \ge  \frac{q(\cP_\nu f)}{\| \cP_\nu(f)\|_\nu^2} \ge \min_{\substack{g \in H^1(G, \nu)       \\ g\perp_\nu \one}} \frac{q(g)}{\| g \|_\nu^2} = \lambda_1(H_\nu)  \ge \lambda_1^{\min}(G).
	\end{align*}
	These inequalities show that  $q(f)  = q(\cP_\nu(f))$. Applying Proposition~\ref{Prop: Harmonic extension} once again, we conclude that $f$ is harmonic on $G \setminus \supp \nu$, finishing the proof.
	
\end{proof}

After these preparations, we are in position to prove Theorem~\ref{thm: characerisation of lambda1}.

\begin{proof}[Proof of Theorem~\ref{thm: characerisation of lambda1}]
	Let $\mu \in \cN$ and fix an eigenfunction $f$ for the first eigenvalue $\lambda_1(H_\mu)$. Consider the discrete measure $\nu= \alpha \delta_x +(1-\alpha) \delta_y$ constructed in Lemma~\ref{lem:KeyLemma}. We will prove that
	\begin{equation} \label{eq:InequalityChainExistenceMinimizingMeasure}
		\lambda_1(H_\mu) \ge \lambda_1(H_\nu) \ge \lambda_1(H_{\mu_{x,y}}).
	\end{equation}
	The first inequality in~\eqref{eq:InequalityChainExistenceMinimizingMeasure} is a consequence of the properties of $\nu$, that is,
	\[
	\lambda_1(H_{\mu})= \frac{q(f)}{\|f\|^2_{L^2(G, \mu)}} = \frac{q(f)}{\|f\|^2_{L^2(G, \nu)}}  \ge \inf_{\substack{g \in H^1(G)\setminus \{0\} \\ g \perp_{\nu} \one}} \frac{q(g)}{\|g\|^2_{L^2(G, \nu)}} = \lambda_{1}(H_\nu),
	\]
	where we used~\eqref{eq:MinLambda1}. Consider the family of measures $\nu_\beta := \beta \delta_x + (1-\beta) \delta_y$ for $\beta \in (0,1)$. We prove that
	\begin{equation} \label{eq:CoefficientsEstimate}
		\lambda_1(H_{\nu_\beta}) \ge \lambda_1(H_{\mu_{x,y}})
	\end{equation}
	for all $\beta$, which in particular implies the second inequality in~\eqref{eq:InequalityChainExistenceMinimizingMeasure}. Fix a $\nu_\beta$-normalized eigenfunction $f_\beta \in H^1(G, \nu_\beta)$ for $\lambda_1(H_{\nu_\beta})$. Since $f_\beta \perp_{\nu_\beta} \one_G$, we infer
	\[
	f_\beta (y) = \frac{\beta}{\beta -1} f_\beta(x).
	\]
	Using this equality in the equation $\|f_\beta\|_{L^2(G,\mu_\alpha)} =1$ leads to
	\[
	f_\beta(x) = \pm \sqrt{\frac{1-\beta}{\beta}}.
	\]
	By Proposition~\ref{Prop: Harmonic extension}, $f_\beta$ is now uniquely determined by $\beta$ (up to the sign). Taking $g \in H^1(G)$ as the unique function which is harmonic on $G\setminus \{x,y\}$ and satisfies $g(x) = 1$ and $g(y) = 0$, we obtain the equality $f_\beta - f_\beta(y)\one= (f_\beta(x) - f_\beta(y)) g$. This means that the minimum
	\[
	\min_\beta  \lambda_1(H_{\nu_\beta})  = \min_\beta q(f_\beta) =  \min_\beta (f_\beta(x) - f_\beta(y))^2 q(g) = q(g) \min_\beta \left|\sqrt{\frac{1-\beta}{\beta}} + \sqrt{\frac{\beta}{1-\beta}} \right|^2
	\]
	is uniquely attained for $\beta = 1/2$, so that~\eqref{eq:CoefficientsEstimate} follows.
	
	Having established~\eqref{eq:InequalityChainExistenceMinimizingMeasure}, we prove the above claims as follows. Let $\mu \in \cN$ be a measure with $\lambda_1(H_\mu)  = \lambda_1^{\min}$. Then~\eqref{eq:InequalityChainExistenceMinimizingMeasure} implies that $\lambda_1(H_{\mu_{x,y}}) = \lambda_1(H_\mu) = \lambda_1^{\min}$. On the other hand, by Lemma~\ref{lem:AuxiliaryFormOfMinimizingMeasure}, the support of $\mu$ equals $\supp \mu = \supp \nu=\{x,y\}$, so that $\mu = \nu_\beta$ for some $\beta \in (0,1)$. Since the inequality in~\eqref{eq:CoefficientsEstimate} is strict for $\beta \neq1/2$, it follows that $\mu  = \mu_{x,y}$, completing the proof.
\end{proof}

Theorem~\ref{thm: characerisation of lambda1} implies another characterization of $\lambda_{1}^{\min}(G)$ using the space $H^1(G)$.
\begin{cor}\label{Cor: Alternative charakterization of lambda 1}
	Let $G$ be a metric graph. Then
	\[
	\lambda_{1}^{\min}(G) = \min_{\substack{f \in H^1(G) \\ \max f =1, \, \min f= -1}}q(f).
	\]
\end{cor}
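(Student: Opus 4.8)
The plan is to establish the identity through two opposite inequalities, exploiting the reduction to two-point measures already in hand. Recall from Theorem~\ref{thm: characerisation of lambda1} and~\eqref{eq:ExpressLamda1Fxy} that $\lambda_1^{\min}(G) = \min_{x \neq y \in G} q(f_{x,y})$, where $f_{x,y}$ is the harmonic function determined by~\eqref{eq:Definitionfxy}. Denote by $m$ the right-hand side of the asserted identity, i.e.\ the infimum of $q(f)$ over all $f \in H^1(G)$ with $\max f = 1$ and $\min f = -1$.

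For the inequality $m \le \lambda_1^{\min}(G)$, I would pick a minimizing pair $x_0 \neq y_0$ furnished by Theorem~\ref{thm: characerisation of lambda1}, so that $\lambda_1^{\min}(G) = q(f_{x_0,y_0})$. Since $f_{x_0,y_0}$ is harmonic on $G \setminus \{x_0, y_0\}$ and takes the values $\pm 1$ at $x_0, y_0$, the maximum principle (Proposition~\ref{Prop: Max and Min in harmonic functions}) forces $\max_G f_{x_0,y_0} = 1$ and $\min_G f_{x_0,y_0} = -1$. Hence $f_{x_0,y_0}$ is admissible for the right-hand side, giving $m \le q(f_{x_0,y_0}) = \lambda_1^{\min}(G)$ and, in passing, showing that the infimum $m$ is attained.

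For the reverse inequality $\lambda_1^{\min}(G) \le m$, I would take an arbitrary admissible $f$ and choose points $x, y \in G$ (distinct, since $1 \neq -1$) with $f(x) = 1$ and $f(y) = -1$, which exist by continuity and compactness of $G$. Forming $\mu_{x,y} = \tfrac12 \delta_x + \tfrac12 \delta_y$, a direct computation gives $\int_G f \, d\mu_{x,y} = 0$ and $\|f\|_{L^2(G,\mu_{x,y})}^2 = 1$, so that $f$ is a valid competitor in the Rayleigh characterization~\eqref{eq:MinLambda1} of $\lambda_1(H_{\mu_{x,y}})$. This yields $\lambda_1^{\min}(G) \le \lambda_1(H_{\mu_{x,y}}) \le q(f)$, and taking the infimum over admissible $f$ gives $\lambda_1^{\min}(G) \le m$. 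Combining the two inequalities proves the corollary.

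I expect no serious obstacle here: the argument is a clean consequence of Theorem~\ref{thm: characerisation of lambda1}, the maximum principle, and the variational formula~\eqref{eq:MinLambda1}. The only subtlety worth double-checking is the first direction, where one must genuinely invoke the maximum principle to ensure that $f_{x_0,y_0}$ neither exceeds $1$ nor drops below $-1$ anywhere on $G$, so that it truly satisfies the normalization $\max f = 1$, $\min f = -1$ rather than merely attaining these values at the two prescribed points.
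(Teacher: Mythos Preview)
Your proof is correct and follows essentially the same approach as the paper. The only minor difference is in the direction $\lambda_1^{\min}(G) \le m$: you pass through the Rayleigh characterization~\eqref{eq:MinLambda1} to bound $\lambda_1(H_{\mu_{x,y}}) \le q(f)$, whereas the paper invokes the energy-minimizing property of harmonic extensions (Proposition~\ref{Prop: Harmonic extension}) to get $q(f) \ge q(f_{x,y}) = \lambda_1(H_{\mu_{x,y}})$ directly; these are two sides of the same coin.
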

\begin{proof}
	Consider first a function $f\in H^1(G)$ such that $f(a)=1 = \max f$ and $f(b)=-1=\min f$ for some $a\neq b \in G$. Then Proposition~\ref{Prop: Harmonic extension} implies that
	\[
	q(f) \ge q(f_{a,b}) = \lambda_1 (H_{\mu_{a,b}}) \ge \lambda_{1}^{\min}(G).
	\]
	On the other hand, let $\mu  = \mu_{x,y}$ be a minimizing measure from Theorem~\ref{thm: characerisation of lambda1}. Then the corresponding eigenfunction $f_{x,y} \in H^1(G, \mu)$ satisfies $\max f = f(x) = 1$ and $\min f = f(y) = -1$ by Proposition~\ref{Prop: Max and Min in harmonic functions}. Moreover, $\lambda_{1}^{\min}(G) = \lambda_1(H_\mu) = q(f_{x,y})$. Combining these observations, the claim follows.
\end{proof}

For later use, we also collect the following auxiliary fact.

\begin{lem}\label{lem: energy between two points} Let $\mu = \mu_{x,y}$ be a minimizing measure for $\lambda_{1}^{\min}(G)$. Then
	\[
	\lambda_{1}^{\min}(G)  = q(f_{x,y}) =  2 \sum_{e=xz} \frac{1-f_{x,y}(z)}{\ell(e)}  =  2 \sum_{e=yz} \frac{1+ f_{x,y}(z) }{\ell(e)},
	\]
	where in this representation we consider a model $(V,E, \ell)$ of $G$ with $x,y \in V$.
\end{lem}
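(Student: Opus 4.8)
The plan is to reduce the whole statement to the integration-by-parts formula of Proposition~\ref{Prop: Harmonic extension}(iii). Fix a model $(V,E,\ell)$ of $G$ with $x,y \in V$, and abbreviate $f := f_{x,y}$ and $A := \{x,y\}$. By the defining equations~\eqref{eq:Definitionfxy}, the function $f$ is harmonic on $G \setminus A$ with $f(x)=1$ and $f(y)=-1$; moreover, since $\mu = \mu_{x,y}$ is a minimizing measure, the identity~\eqref{eq:ExpressLamda1Fxy} coming from Theorem~\ref{thm: characerisation of lambda1} already gives $\lambda_1^{\min}(G) = q(f)$. It therefore only remains to evaluate the energy $q(f)$ in terms of the boundary data at $x$ and $y$.

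First I would apply Proposition~\ref{Prop: Harmonic extension}(iii) with $g = f$ and $A = \{x,y\}$. Writing $z$ for the neighbour of the respective support point along an edge $e$, and inserting $f(x)=1$ and $f(y)=-1$, this yields
\[
q(f) = \sum_{e=xz} \frac{1-f(z)}{\ell(e)} + \sum_{e=yz} \frac{1+f(z)}{\ell(e)}.
\]
The remaining task is to show that the two sums on the right-hand side coincide. For this I would apply Proposition~\ref{Prop: Harmonic extension}(iii) a second time, now with $g = \one_G$. Since $\one_G' \equiv 0$, the left-hand side $q(\one_G, f)$ vanishes, and the formula reduces to
\[
0 = \sum_{e=xz}\frac{1-f(z)}{\ell(e)} - \sum_{e=yz}\frac{1+f(z)}{\ell(e)},
\]
so that the two sums are indeed equal. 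Combining the two displays gives $q(f) = 2\sum_{e=xz}\frac{1-f(z)}{\ell(e)} = 2\sum_{e=yz}\frac{1+f(z)}{\ell(e)}$, which together with $\lambda_1^{\min}(G) = q(f)$ is exactly the assertion.

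Since every step is a direct application of an identity already available, there is no real analytic obstacle here; the only points requiring care are bookkeeping ones. One must ensure the chosen model has $x,y$ among its vertices (always achievable after subdivision, as in Remark~\ref{rem:DtNGeneral}) and correctly orient all incident edges towards the support point as demanded in Proposition~\ref{Prop: Harmonic extension}(iii), taking into account possible loops and multiple edges between a support point and a neighbour. As an alternative to the second integration by parts, the equality of the two sums can be read off from the fact that $f_{x,y}$ is the eigenfunction of the Dirichlet-to-Neumann operator of Example~\ref{ex:DtN} with $B=\{x,y\}$ and weights $m(x)=m(y)=\tfrac{1}{2}$: evaluating the eigenvalue equation $H_\mu f = \lambda_1^{\min}(G) f$ separately at $x$ and at $y$ reproduces the two expressions $2\sum_{e=xz}\frac{1-f(z)}{\ell(e)} = \lambda_1^{\min}(G) = 2\sum_{e=yz}\frac{1+f(z)}{\ell(e)}$ directly.
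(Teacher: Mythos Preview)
Your proof is correct and essentially identical to the paper's own argument: both apply Proposition~\ref{Prop: Harmonic extension}(iii) twice, once with $g=f_{x,y}$ and once with $g=\one_G$, and combine the two resulting identities (the paper merely reverses the order of these two applications). Your closing remark about reading off the equality from the Dirichlet-to-Neumann eigenvalue equation at $x$ and $y$ is a nice additional observation not spelled out in the paper.
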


\begin{proof} Since $f_{x,y}$ is harmonic on $G\setminus\{x,y\}$, Proposition~\ref{Prop: Harmonic extension} leads to
	\begin{align*}
		0=q(f_{x,y}, \one_G) =  \sum_{e =xz } \frac{1-f_{x,y}(z)}{\ell(e)} + \sum_{e=yz} \frac{-1-f_{x,y}(z)}{\ell(e)} ,
	\end{align*}
	which proves the last of the above equalities. Applying Proposition~\ref{Prop: Harmonic extension} once again,
	\begin{align*}
		q(f_{x,y}) = \sum_{e = xz} \frac{1-f_{x,y}(z)}{\ell(e)} + \sum_{e=yz} \frac{1+f_{x,y}(z)}{\ell(e)},
	\end{align*}
	yielding the remaining equalities.
\end{proof}

\subsection{Characterization via resistance} \label{ss:CharacterizationResistance}

Let $G$ be a metric graph. For points $x,y \in G$, there exists a unique solution $f\in H^1 (G)$ to the equations
\begin{equation}\label{eq:SystemEquationsResistance}
	\begin{cases}
		\Delta f = \delta_x - \delta_y \\
		f(y) =0
	\end{cases}
\end{equation}
where $\Delta$ is the distributional Laplacian on $H^1(G)$, i.e., the first equation in~\eqref{eq:SystemEquationsResistance} is understood as
\[
q(f,g) = g(x) - g(y) \qquad \text{for all $g \in H^1(G)$}.
\]

The \emph{(effective) resistance} $r(x,y)$ between $x$ and $y$ is defined as
\[
r(x,y) =f(x).
\]
The reciprocal $1/r(x,y)$ is sometimes called the \emph{(effective) conductance} between $x$ and $y$.

In the literature, one can find many equivalent definitions of the resistance and in this regard we refer to~\cite[Theorem~3.2]{jp23}. For instance,
\begin{equation}\label{eq: resistance characerisation}
	r(x,y) = 1/ \min\{ q(f) | \, f \in H^1(G), f(x) = 1, f(y) = 0 \}.
\end{equation}
We need also the following representation in terms of the function $f_{x,y}$ introduced in~\eqref{eq:Definitionfxy}. Taking into account Proposition \ref{Prop: Harmonic extension} and Lemma \ref{lem: energy between two points}, it follows that the unique solution to \eqref{eq:SystemEquationsResistance} is given by $f = \frac{2}{q(f_{x,y})} (f_{x,y}+\one_{G})$. In particular, the resistance $r(x,y)$ can be written as
\begin{equation} \label{eq:RepresentationResistance}
	r(x,y) = \frac{2}{q(f_{x,y})} (f_{x,y}(x) +1)  = \frac{4}{q(f_{x,y})}.
\end{equation}

It is well-known that the resistance $(x,y)\in G\times G \mapsto r(x,y)$ is a metric on $G$ (e.g.,~\cite[Lemma~3.5]{jp23}). It is comparable to the path metric $\varrho$ in the sense that (see, e.g.,~\cite[Lemma~3.52]{jp23})
\begin{equation} \label{eq:InequalityResistancePathMetric}
	r(x,y) \le \varrho(x,y) \qquad \text{for all $x,y \in G$}.
\end{equation}
Moreover, the metrics $r$ and $\varrho$ coincide if and only if $G$ is a tree. For general graphs, the next lemma controls the resistance $r$ using the square of the path metric $\varrho$.
\begin{lem} \label{lem:EasyUpperEstimate}
	Let $G$ be a metric graph with total length $L(G)$. Then
	\[
	r(x,y)\ge \frac{\varrho(x,y)^2}{L(B_\varrho(x, \varrho(x,y))} \ge \frac{\varrho(x,y)^2}{L(G)} \qquad \text{for all $x,y\in G$}.
	\]
	Here, $B_\varrho(x, t)$ denotes the ball in the path metric $\varrho$ of radius $t$ around $x \in G$, and $L(B_\varrho(x,t))$ its Lebesgue volume.
\end{lem}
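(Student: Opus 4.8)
The plan is to use the variational characterization of the resistance in~\eqref{eq: resistance characerisation}, which reduces a \emph{lower} bound on $r(x,y)$ to exhibiting a single admissible test function of small energy. Writing $d := \varrho(x,y)$, it suffices to produce some $f \in H^1(G)$ with $f(x) = 1$, $f(y) = 0$ and $q(f) \le L(B_\varrho(x,d))/d^2$. Indeed,~\eqref{eq: resistance characerisation} then gives at once
\[
r(x,y) = \frac{1}{\min\{q(g) : g \in H^1(G),\, g(x)=1,\, g(y)=0\}} \ge \frac{1}{q(f)} \ge \frac{d^2}{L(B_\varrho(x,d))},
\]
which is the first inequality. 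The second inequality is immediate, since $B_\varrho(x,d) \subseteq G$ forces $L(B_\varrho(x,d)) \le L(G)$.

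The natural candidate is the truncated and normalized distance function from $x$, namely
\[
f(z) := 1 - \frac{1}{d}\,\min\bigl(\varrho(x,z),\, d\bigr), \qquad z \in G.
\]
First I would verify admissibility: the map $z \mapsto \varrho(x,z)$ is continuous and $1$-Lipschitz by the triangle inequality, hence so is $f$, so that $f \in H^1(G)$; moreover $f(x) = 1$ and $f(y) = 1 - d/d = 0$, as required.

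The key step is the energy estimate. Since $\varrho(x,\cdot)$ is $1$-Lipschitz, its derivative satisfies $|(\varrho(x,\cdot))'| \le 1$ almost everywhere, whence $|f'| \le 1/d$ a.e. Crucially, on $\{z : \varrho(x,z) > d\}$ the truncation is active and $f$ is locally constant, so $f' = 0$ a.e. outside the closed ball $\{\varrho(x,\cdot) \le d\}$. On each edge the distance function $\varrho(x,\cdot)$ is piecewise linear with slopes $\pm 1$, so the level set $\{\varrho(x,\cdot) = d\}$ is finite and the open and closed balls have the same Lebesgue volume. Therefore
\[
q(f) = \int_G |f'|^2\, dx = \frac{1}{d^2}\int_{\{\varrho(x,\cdot) < d\}} |(\varrho(x,\cdot))'|^2\, dx \le \frac{1}{d^2}\, L\bigl(B_\varrho(x,d)\bigr),
\]
which is exactly the bound needed above.

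The computation is essentially routine once the test function is fixed; the only point requiring a little care is the geometric observation that $f'$ vanishes almost everywhere outside $B_\varrho(x,d)$, so that the energy integral is confined to the ball rather than to all of $G$. This localization is precisely what yields the sharper first inequality, with $L(B_\varrho(x,d))$ in place of $L(G)$, and I do not anticipate any serious obstacle beyond verifying it together with the almost-everywhere Lipschitz bound on the derivative.
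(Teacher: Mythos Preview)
Your proof is correct and is essentially identical to the paper's: both use the variational characterization~\eqref{eq: resistance characerisation} with the test function $f(z)=\max\{1-\varrho(x,z)/\varrho(x,y),\,0\}$ (your $1-\tfrac{1}{d}\min(\varrho(x,z),d)$ is the same function), and then bound $q(f)$ via $|f'|\le 1/d$ and $\supp f' \subset B_\varrho(x,d)$. The only cosmetic difference is that the paper asserts $|f'|=1/d$ a.e.\ on the ball, whereas you use the weaker $|f'|\le 1/d$; either suffices for the upper bound on $q(f)$.
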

\begin{proof}
	By~\eqref{eq: resistance characerisation}, the estimate is equivalent to the inequality
	\[
	\min\{ q(f) | f \in H^1(G) \text{ with $f(x) = 1$ and $f(y) = 0$} \} \le \frac{L(B_\varrho(x, \varrho(x,y))}{\varrho(x,y)^2}.
	\]
	The latter follows immediately by considering the function $f\colon G \to \R$ defined by
	\[
	f(s) =\max\{1 - \frac{\varrho(x,s)}{\varrho(x,y)}, 0\},  \qquad s \in G.
	\]
	The derivative $f'$ is supported on $B_\varrho(x, \varrho(x,y))$, where it satisfies $|f'| = 1/\varrho(x,y)$ almost everywhere. Estimating $q(f)$ accordingly, we complete the proof.
\end{proof}
\begin{rmk}
	The metrics $\varrho$ and $r$ are actually bilipschitz equivalent with
	\[
	r(x,y) \le \varrho(x,y) \le 2 \#(E) \, r(x,y) \qquad \text{for all $x,y \in G$}.
	\]
	This follows from~\eqref{eq:InequalityResistancePathMetric} combined with Lemma~\ref{lem:EasyUpperEstimate} and the elementary estimate $L(B_\varrho(x, t)) \le 2 \#(E) t$, which is valid for $x \in G$ and $t > 0$.
\end{rmk}

The next theorem characterizes $\lambda_1^{\min}(G)$ in terms of the resistance metric $r$.
\begin{thm} \label{Lem: Resitance metric and optimal eigenvalue}
	Let $G$ be a metric graph and $\diam_r(G)$ the diameter of $G$ with respect to the resistance metric $r$, i.e., the maximum of $r\colon G \times G \to \R_{\geq 0}$. Then
	\[
	\lambda_1^{\min}(G) = \frac{4}{\diam_r(G)}.
	\]
	Moreover, the measures $\mu \in \cN$ with $\lambda_1(H_\mu) = \lambda_1^{\min}(G)$ are precisely the measures $\mu_{x,y}$ for points $x, y\in G$ with $r(x,y) = \diam_r(G)$.
	
	If $G$ is a tree, then
	\[
	\lambda_1^{\min}(G) = \frac{4}{\diam_r(G)} = \frac{4}{\diam_\varrho(G)}.
	\]
\end{thm}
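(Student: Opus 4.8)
The plan is to combine the two-point-measure characterization from Theorem~\ref{thm: characerisation of lambda1} with the resistance representation~\eqref{eq:RepresentationResistance}, after which the identity falls out almost immediately. Indeed, Theorem~\ref{thm: characerisation of lambda1}, in the form~\eqref{eq:ExpressLamda1Fxy}, already reduces the problem to two-point measures, giving $\lambda_1^{\min}(G) = \min_{x\neq y \in G} q(f_{x,y})$. On the other hand,~\eqref{eq:RepresentationResistance} expresses the energy of the normalized eigenfunction directly through the resistance as $q(f_{x,y}) = 4/r(x,y)$. Substituting the latter into the former, I would obtain
\[
\lambda_1^{\min}(G) = \min_{x\neq y \in G} \frac{4}{r(x,y)} = \frac{4}{\max_{x\neq y\in G} r(x,y)} = \frac{4}{\diam_r(G)},
\]
where the middle equality uses that minimizing $4/r$ is the same as maximizing $r$.

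For this chain to be rigorous I would want to verify two minor points. First, the maximum defining $\diam_r(G)$ is attained: since $r$ is comparable to the path metric $\varrho$ (by~\eqref{eq:InequalityResistancePathMetric} together with Lemma~\ref{lem:EasyUpperEstimate}, $r$ is continuous) and $G\times G$ is compact, the continuous function $r$ attains its maximum; correspondingly the minimum in Theorem~\ref{thm: characerisation of lambda1} is attained. Second, the diagonal causes no trouble, since as $x\to y$ one has $r(x,y)\to 0$ and hence $q(f_{x,y})\to\infty$, so minimizers automatically stay away from the diagonal, consistently with the constraint $x\neq y$.

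The characterization of the minimizing measures then follows by tracing equality through the same chain. By Theorem~\ref{thm: characerisation of lambda1}, every minimizer has the form $\mu_{x,y}$, and such a measure is optimal precisely when $q(f_{x,y}) = \lambda_1^{\min}(G)$, i.e. when $4/r(x,y) = 4/\diam_r(G)$, that is, exactly when the pair $(x,y)$ realizes the resistance diameter. The tree case is immediate from the facts already recorded in Section~\ref{ss:CharacterizationResistance}: for a tree the resistance metric $r$ and the path metric $\varrho$ coincide, so $\diam_r(G)=\diam_\varrho(G)$ and the second displayed identity follows.

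I do not expect a genuine obstacle here, since the essential difficulty has already been resolved upstream — in the reduction to two-point measures (Theorem~\ref{thm: characerisation of lambda1}) and in the derivation of the resistance identity~\eqref{eq:RepresentationResistance}. The only points needing care are the attainment of the extrema (continuity and compactness) and the bookkeeping of the equality cases in the characterization of the minimizers.
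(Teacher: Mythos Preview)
Your proposal is correct and follows essentially the same route as the paper: combine~\eqref{eq:ExpressLamda1Fxy} with~\eqref{eq:RepresentationResistance} to obtain $\lambda_1^{\min}(G)=\min_{x\neq y} 4/r(x,y)=4/\diam_r(G)$, trace equality through for the minimizers, and invoke $r=\varrho$ on trees. The paper's proof is in fact even more terse and omits the attainment remarks you add, so your version is, if anything, slightly more careful.
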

\begin{proof}
	Combining~\eqref{eq:ExpressLamda1Fxy} with the representation~\eqref{eq:RepresentationResistance} for $r$, we infer that
	\[
	\lambda_1^{\min}(G)  = \min_{x \neq y} \lambda_1 (H_{\mu_{x,y}}) =\min_{x \neq y} q(f_{x,y}) =  \min_{x \neq y} \frac{4}{r(x,y)} = \frac{4}{\diam_r(G)}.
	\]
	The claim concerning trees follows from the fact that $r  = \varrho$ for trees.
\end{proof}

\subsection{Characterization via spectral partitioning}
As it turns out, there is an interesting connection between the first optimal eigenvalue $\lambda_1^{\min}$ and spectral partitioning on graphs.
We begin with a variant of optimal eigenvalues which takes into account a Dirichlet condition.

Throughout this section, let $G=(V,E, \ell)$ be a metric graph. Given a closed, non-empty subset $A \subset G$, we 
denote by
\[
H^1_0(G,A) := \{f \in H^1(G) \; | \; f|_{A} \equiv 0\}
\]
the space of $H^1$-functions vanishing in $A$. For a measure $\mu\in \cM(G \setminus A)$, that is, a measure $\mu \in \cM$ with $\mu(A) = 0$, we introduce the Sobolev space
\begin{align*}
H^1_0(G,A,\mu) := \{f \in H^1_0(G,A) \; | \; f \text{ harmonic on } G\setminus (\supp \mu \cup A)\}, 
\end{align*}
which is a dense subspace of the $L^2$-space $L^2(G, \mu)$. In the above context, we also refer to $A$ as the \emph{Dirichlet set}.

It turns out that the restriction of the energy form $q$ to $H^1_0(G,A,\mu)$ is closed. Replacing $H^1(G,\mu)$ by $H^1_0(G,A, \mu)$ in the construction of the Laplacian $H_\mu$ leads to a self-adjoint operator
\[
H_\mu^D \colon \dom(H_\mu^D) \subset L^2(G, \mu) \to L^2(G, \mu)
\]
in the Hilbert space $L^2(G, \mu)$. We call $H_\mu^D$ the \emph{Laplacian for $\mu$ with Dirichlet condition at $A$}, or simply the \emph{Dirichlet Laplacian for $\mu$}.

The operator $H_\mu^D$ is non-negative and has purely discrete spectrum. Taking into account multiplicities, we enumerate its eigenvalues as 
\[
\lambda_0(H_\mu^D) \le \lambda_1(H_\mu^D) \le \lambda_2(H_\mu^D) \le \dots.
\]
Note that the smallest eigenvalue $\lambda_0(H_\mu^D)$ is strictly positive and admits the variational characterization
\begin{equation} \label{eq:VariationalCharacterizationDirichlet}
\lambda_0(H_\mu) = \inf_{\substack{f \in H^1_0(G, A, \mu) \\ f \neq 0}} \frac{q(f)}{\|f\|^2_{L^2(G, \mu)}} = \inf_{\substack{f \in H^1_0(G, A) \\ f \neq 0}} \frac{q(f)}{\|f\|^2_{L^2(G, \mu)}}.
\end{equation}
We are interested in the following Dirichlet variant of optimal eigenvalues.
\begin{defn}
Let $A$ be a closed, non-empty subset in a metric graph $G$. For $k \in \N_0$, the \emph{$k$-the optimal Dirichlet eigenvalue with respect to $A$} is defined by
\[
\lambda_k^{\min,D} :=  \lambda_k^{\min,D}(G, A) := \inf_{\mu \in \cN(G \setminus A)} \lambda_{k}(H_\mu^D),
\] 
where $\cN(G \setminus A) := \{\mu \in \cN| \, \mu(A) = 0\}$.
\end{defn}
We focus on the smallest optimal Dirichlet eigenvalue $\lambda_0^{\min,D}$. A first estimate is provided by the following variant of Lemma~\ref{Prop: Nicaise}.
\begin{lem}\label{Prop: Nicaise2}
Let $G$ be a metric graph and $\emptyset \neq A\subset G$ a closed subset. Then 
\begin{equation}\label{eq: Dirichlet Nicaise2}
	\lambda_{0}^{\min, D}(G, A) \geq \frac{1}{\diam(G)},
\end{equation}
where $\diam(G)= \diam_\varrho(G)$ is the diameter of $G$ with respect to the path metric $\varrho$. 
\end{lem}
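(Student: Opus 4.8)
The plan is to obtain the bound as an immediate consequence of the first inequality in Lemma~\ref{Prop: Nicaise}. The crucial observation is that any admissible test function in the variational problem automatically possesses a zero: by definition, every $f \in H^1_0(G, A)$ vanishes on $A$, and since $A$ is non-empty, $f$ has at least one zero. This is exactly the hypothesis required to invoke Lemma~\ref{Prop: Nicaise}, so the estimate will follow without any additional work on the structure of $G$.

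Carrying this out, I would fix an arbitrary measure $\mu \in \cN(G \setminus A)$ and an arbitrary $f \in H^1_0(G, A)$ with $\|f\|_{L^2(G,\mu)} > 0$ (functions with $\|f\|_{L^2(G,\mu)} = 0$ contribute $+\infty$ to the Rayleigh quotient and may be discarded when computing the infimum). Since $\mu(G) = 1$ and $f$ has a zero, Lemma~\ref{Prop: Nicaise} yields
\[
q(f)\,\diam(G) \geq \int_G |f|^2 \, d\mu = \|f\|^2_{L^2(G,\mu)}.
\]
Dividing by $\|f\|^2_{L^2(G,\mu)} > 0$ gives the pointwise bound $q(f)/\|f\|^2_{L^2(G,\mu)} \geq 1/\diam(G)$ on the Rayleigh quotient.

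Finally, I would pass to the two infima. Using the variational characterization~\eqref{eq:VariationalCharacterizationDirichlet}, taking the infimum over all admissible $f$ gives $\lambda_0(H_\mu^D) \geq 1/\diam(G)$; and since this lower bound is uniform in $\mu$, taking the infimum over $\mu \in \cN(G \setminus A)$ produces $\lambda_0^{\min,D}(G,A) \geq 1/\diam(G)$, as claimed. There is no genuine obstacle in this argument: the only minor point worth monitoring is that the lower bound $1/\diam(G)$ depends on neither $f$ nor $\mu$, so it survives both infima intact.
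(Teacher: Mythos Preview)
Your proposal is correct and follows essentially the same approach as the paper: the paper's proof is a one-liner citing Lemma~\ref{Prop: Nicaise} together with the variational characterization~\eqref{eq:VariationalCharacterizationDirichlet}, and you have simply spelled out the details of that argument.
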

\begin{proof}
This follows from Lemma~\ref{Prop: Nicaise} and the characterization of $\lambda_0(H_\mu)$ in~\eqref{eq:VariationalCharacterizationDirichlet}.
\end{proof}

The next lemma reduces the study of $\lambda_{0}^{\min, D}(G,A)$ to the case where $G \setminus A$ is connected. For a subset $A \subset G$ as above, the closures of all connected components $U_i$, $i \in I$, of $G \setminus A$ form a countable family $(\overline U_i)_{i \in I}$ of metric subgraphs of $G$.

\begin{lem} \label{lem:MinimizeDirichletOverConnectedComponents}
Notations as above,
\[
\lambda_{0}^{\min, D}(G, A) = \min_{i \in I} \lambda_{0}^{\min, D}(\overline{U_i} , \partial U_i),
\]
where $\partial U_i =  \overline{U_i} \cap A$ denotes the boundary of $U_i$ in $A$.
\end{lem}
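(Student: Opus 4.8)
The plan is to prove the two inequalities separately, after first recording the structural fact that on functions $f \in H^1_0(G,A)$ both the numerator and the denominator of the Rayleigh quotient split over the components $U_i$. Writing $f_i := f|_{\overline{U_i}}$ and $\mu_i := \mu|_{\overline{U_i}}$ for a measure $\mu \in \cN(G\setminus A)$, I would use that $f$ vanishes on $A$ (so $f' = 0$ a.e.\ on $A$) and that $\mu$ charges only $G\setminus A = \bigsqcup_{i} U_i$, which gives
\[
q(f) = \sum_{i \in I} q(f_i), \qquad \|f\|^2_{L^2(G,\mu)} = \sum_{i \in I}\|f_i\|^2_{L^2(\overline{U_i},\mu_i)}.
\]
Here $f_i \in H^1_0(\overline{U_i},\partial U_i)$, and conversely any such family glues across the Dirichlet set $A$ (where all the functions vanish) to an element of $H^1_0(G,A)$. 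Throughout I would rely on the variational formula~\eqref{eq:VariationalCharacterizationDirichlet}, expressing $\lambda_0(H_\mu^D)$ as the infimum of $q(f)/\|f\|^2_{L^2(G,\mu)}$ over $f \in H^1_0(G,A)\setminus\{0\}$.

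For the inequality $\lambda_0^{\min,D}(G,A) \ge \min_{i} \lambda_0^{\min,D}(\overline{U_i},\partial U_i)$, I would fix $\mu$ with masses $m_i := \mu(U_i)$, $\sum_i m_i = 1$, and apply the mediant inequality $\frac{\sum a_i}{\sum b_i} \ge \inf_i \frac{a_i}{b_i}$ (taken over indices with $\|f_i\|_{L^2(\mu_i)} \neq 0$, the remaining terms contributing only in the numerator) to the decomposed quotient:
\[
\frac{q(f)}{\|f\|^2_{L^2(G,\mu)}} \ge \inf_{i:\, m_i>0} \frac{q(f_i)}{\|f_i\|^2_{L^2(\overline{U_i},\mu_i)}} \ge \min_{i:\, m_i>0} \lambda_0(H^D_{\mu_i}),
\]
where $H^D_{\mu_i}$ is the Dirichlet Laplacian on $\overline{U_i}$ for $\mu_i$ with Dirichlet set $\partial U_i$. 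Since rescaling a measure by $c>0$ rescales the Rayleigh quotient, and hence $\lambda_0$, by $1/c$, and since $m_i \le 1$, I obtain $\lambda_0(H^D_{\mu_i}) = m_i^{-1}\lambda_0(H^D_{\mu_i/m_i}) \ge \lambda_0^{\min,D}(\overline{U_i},\partial U_i)$. Taking the infimum over $f$ and then over $\mu$ yields the claim.

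For the reverse inequality I would show $\lambda_0^{\min,D}(G,A) \le \lambda_0^{\min,D}(\overline{U_i},\partial U_i)$ for each fixed $i$. Given a probability measure $\nu$ on $\overline{U_i}\setminus\partial U_i$, I view it as an element of $\cN(G\setminus A)$; any $g \in H^1_0(\overline{U_i},\partial U_i)$ extended by zero lies in $H^1_0(G,A)$ and has the same energy and the same $L^2(\nu)$-norm. Testing~\eqref{eq:VariationalCharacterizationDirichlet} only against such functions gives $\lambda_0(H^D_\nu)\le \lambda_0$ of the corresponding Dirichlet Laplacian on $\overline{U_i}$; taking the infimum over $\nu$ and then the minimum over $i$ finishes this direction.

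The step requiring the most care, and the one I regard as the main obstacle, is justifying that the right-hand side is a genuine minimum rather than merely an infimum, since $G\setminus A$ may have infinitely many components. Here I would use that the $U_i$ are pairwise disjoint, so $\sum_{i} L(U_i) \le L(G) < \infty$ and thus $L(U_i) \to 0$; as each connected $\overline{U_i}$ satisfies $\diam(\overline{U_i}) \le L(U_i)$, Lemma~\ref{Prop: Nicaise2} applied to $(\overline{U_i},\partial U_i)$ (with $\partial U_i \neq \varnothing$, since $G$ is connected and $A \neq \varnothing$) gives $\lambda_0^{\min,D}(\overline{U_i},\partial U_i) \ge 1/\diam(\overline{U_i}) \to \infty$. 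Hence only finitely many components fall below any fixed threshold, so both the minimum over $i$ and the inner minimum appearing in the lower bound above are attained, and the relevant infima and minima coincide.
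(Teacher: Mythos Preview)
Your proof is correct and follows essentially the same approach as the paper: both arguments split the Rayleigh quotient over the components, use the rescaling $\lambda_0(H^D_{\mu_i}) = m_i^{-1}\lambda_0(H^D_{\mu_i/m_i}) \ge \lambda_0^{\min,D}(\overline{U_i},\partial U_i)$ for the lower bound, handle the upper bound by viewing a measure on one component as a measure on $G$, and then invoke Lemma~\ref{Prop: Nicaise2} together with $\sum_i L(U_i) \le L(G) < \infty$ to upgrade the infimum to a minimum. The only cosmetic difference is that the paper picks an actual eigenfunction and a component on which it does not vanish, whereas you work directly with the variational characterization via the mediant inequality; the content is the same.
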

\begin{proof} First we prove that
\[
\lambda_{0}^{\min, D}(G, A) = \inf_{i \in I} \lambda_{0}^{\min, D}(\overline{U_i}, \partial U_i).
\]
The inequality \glqq$\leq$\grqq{} is trivial, since every measure on a connected component $\overline{U_i}$ yields a measure on $G$. To prove the reverse inequality, take a measure $\mu \in \cN(G \setminus A)$ and consider the restrictions $\mu_i = \mu|_{\overline{U_i}}$ for $i \in I$. Then $\lambda_0 (H_\mu^D)$ is an eigenvalue for the Laplacian $H_{\mu_i}^D$ for every component $\overline{U_i}$ on which the corresponding eigenfunction $f \in L^2(G, \mu)$ is non-vanishing. Fixing such a component $\overline{U_i}$ yields
\[
\lambda_{0}(H_{\mu}^D) \geq \lambda_{0}(H_{\mu_i}^D)  \geq \frac{1}{\mu(\overline{U_i})}\lambda_{0}^{\min, D}(\overline{U_i}, \partial U_i).
\]
Since $\mu(\overline{U_i})\leq 1$, the desired estimate follows.

Let $C> 0$ be arbitrary. Assume there exists a countable infinite set $J\subset I$ with $\lambda_{0}^{\min, D}(\overline{U_i}, \partial U_i) < C$ for every $i \in J$. Applying Lemma~\ref{Prop: Nicaise2}, we obtain
\[
L(G) \geq \sum_{i \in J} L(\overline{U_i}) \geq \sum_{i \in J} \diam(\overline{U_i}) \geq \sum_{i \in J} \frac{1}{\lambda_{0}^{\min D}(\overline{U_i}, \partial U_i)} \geq  \sum_{i \in J} \frac{1}{C} = \infty.  
\]
This contradiction shows that the infimum is in fact a minimum.  
\end{proof}

The following result provides an analog of Theorem~\ref{thm: characerisation of lambda1} for optimal Dirichlet eigenvalues.

\begin{lem} \label{lem:MinimizingMeasuresDirichletEigenvalue} Let $G$ be a metric graph and $\emptyset \neq A\subset G$ a closed subset. Then there exists a point $x \in G\setminus A$ such that
\[
\lambda_{0}^{\min, D}(G, A) = \lambda_0 (H_{\delta_x}^D). 
\]
Moreover, every measure $\mu \in \cN(G\setminus A)$ with $\lambda_0(H_\mu^D) = \lambda_{0}^{\min, D}(G, A)$ is a Dirac measure $\mu = \delta_x$ for some $x \in G \setminus A$. 
\end{lem}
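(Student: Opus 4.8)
The plan is to mirror the strategy behind Theorem~\ref{thm: characerisation of lambda1}, but the Dirichlet setting is in fact simpler: since a ground state of $H_\mu^D$ need not be $\mu$-mean-zero, I expect a \emph{one}-point reduction in place of the two-point reduction of Lemma~\ref{lem:KeyLemma}. The elementary input is the pointwise bound
\[
\int_G f^2 \, d\mu \le \max_G f^2 = \int_G f^2 \, d\delta_{x_0},
\]
valid for every $\mu \in \cN$ and every $f \in H^1_0(G,A)$, where $x_0 \in G$ realizes $\max_G |f|$. Because $f|_A \equiv 0$ and $f \not\equiv 0$, this maximum is positive and is attained at a point $x_0 \in G \setminus A$.

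First I would show that Diracs are optimal. Given any $\mu \in \cN(G\setminus A)$, I fix a ground-state eigenfunction $f$ of $H_\mu^D$; replacing $f$ by $|f| \in H^1_0(G,A)$, which has the same $L^2(G,\mu)$-norm and energy $q(|f|)\le q(f)$, I may assume $f \ge 0$. With $x_0$ the maximum point as above, the variational characterization~\eqref{eq:VariationalCharacterizationDirichlet} together with the displayed bound gives
\[
\lambda_0(H_\mu^D) = \frac{q(f)}{\int_G f^2 \, d\mu} \ge \frac{q(f)}{f(x_0)^2} \ge \lambda_0(H_{\delta_{x_0}}^D).
\]
Hence $\inf_{x \in G\setminus A}\lambda_0(H_{\delta_x}^D) \le \lambda_0(H_\mu^D)$ for every admissible $\mu$, and since Diracs are themselves admissible, $\lambda_0^{\min,D}(G,A) = \inf_{x\in G\setminus A}\lambda_0(H_{\delta_x}^D)$.

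To obtain existence I would show that this infimum is attained. By~\eqref{eq:VariationalCharacterizationDirichlet} and scaling, $\lambda_0(H_{\delta_x}^D) = \min\{q(g) : g \in H^1_0(G,A),\ g(x) = 1\} = q(h_x)$, where $h_x$ is the capacitary potential, i.e.\ the function harmonic on $G \setminus (\{x\}\cup A)$ with $h_x(x)=1$ and $h_x|_A \equiv 0$ furnished by Proposition~\ref{Prop: Harmonic extension}. Running a shortest path from $x$ to $A$ and applying Cauchy--Schwarz as in Lemma~\ref{Prop: Nicaise} yields $\lambda_0(H_{\delta_x}^D) \ge 1/\varrho(x,A)$, so the value blows up as $x \to A$; meanwhile $h_x$, and hence $q(h_x)$, depends continuously on $x$ by continuity of the harmonic extension in Proposition~\ref{Prop: Harmonic extension}. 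Thus $x \mapsto \lambda_0(H_{\delta_x}^D)$ is continuous on $G\setminus A$ with compact sublevel sets, so it attains its infimum at some $x \in G\setminus A$, which proves the first assertion.

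Finally, for the ``moreover'' part I would run the inequality chain backwards. If $\mu \in \cN(G\setminus A)$ satisfies $\lambda_0(H_\mu^D) = \lambda_0^{\min,D}(G,A)$, then with $f\ge 0$, $m := \max_G f$ and $x_0$ as above, all inequalities become equalities. This forces, first, $\int_G f^2 \, d\mu = m^2$, whence $\supp\mu \subseteq \{f = m\}$; and second, $q(f)/f(x_0)^2 = \lambda_0(H_{\delta_{x_0}}^D)$, so that $f$ minimizes the $\delta_{x_0}$-Rayleigh quotient and therefore, by the energy-minimality of harmonic functions in Proposition~\ref{Prop: Harmonic extension}, is harmonic on $G \setminus (\{x_0\}\cup A)$. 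Plugging this into the eigenvalue equation $q(f,\varphi) = \lambda_0(H_\mu^D)\int_G f\varphi\, d\mu$ and using the integration-by-parts formula~\eqref{harmonic extension IBP} (the only surviving boundary term sits at $x_0$, since $\varphi|_A \equiv 0$) together with $f \equiv m$ on $\supp\mu$, I expect to reduce the identity to $\varphi(x_0) = \int_G \varphi \, d\mu$ for all $\varphi \in H^1_0(G,A)$, which by density of $H^1_0(G,A)$ in the continuous functions vanishing on $A$ gives $\mu = \delta_{x_0}$. I expect this last equality-case analysis to be the main obstacle: one must simultaneously exploit that the pointwise bound is saturated and that the eigenfunction is harmonic off $\{x_0\}\cup A$ in order to collapse $\mu$ to a single Dirac.
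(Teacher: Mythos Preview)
Your argument is correct and in several places more streamlined than the paper's, but it follows a genuinely different route.

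For existence, the paper first invokes Lemma~\ref{lem:MinimizeDirichletOverConnectedComponents} to reduce to finite $A$ with $G\setminus A$ connected, and then runs a compactness argument on the finite-dimensional space of edgewise-linear functions: starting from $\lambda_0^{\min,D}(G,A)\ge \inf\{q(f): f\in H^1_0(G,A),\ \|f\|_\infty=1\}$, it replaces each near-minimizer by its edgewise-linear interpolant, extracts a convergent subsequence of vertex values and of the maximum points~$x_{f_n}$, and reads off a limiting point $x_0$ with $\lambda_0^{\min,D}(G,A)=\lambda_0(H^D_{\delta_{x_0}})$. Your approach bypasses both the reduction and the finite-dimensional compactness by working directly with the capacitary potential $h_x$ and combining the coercivity bound $q(h_x)\ge 1/\varrho(x,A)$ with continuity of $x\mapsto q(h_x)$. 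This is cleaner, but note that the continuity you cite is not actually stated in Proposition~\ref{Prop: Harmonic extension}; you would need a short separate argument (e.g.\ upper semicontinuity from testing with $h_x/h_x(x_n)$, lower semicontinuity from weak $H^1$-compactness of the $h_{x_n}$ together with the compact embedding into $C(G)$).

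For the uniqueness part the two arguments again diverge: the paper, having $f=g$ equal to the capacitary potential at $x$, simply appeals to the maximum principle (Proposition~\ref{Prop: Max and Min in harmonic functions}) to conclude that $\{f=\max f\}=\{x\}$, which together with $\int f^2\,d\mu=(\max f)^2$ forces $\mu=\delta_x$. Your route via the eigenvalue identity and~\eqref{harmonic extension IBP}, leading to $\int\varphi\,d\mu=\varphi(x_0)$ for all $\varphi\in H^1_0(G,A)$, is equally valid and closer in spirit to the proof of Lemma~\ref{lem:AuxiliaryFormOfMinimizingMeasure}; the paper's max-principle argument is a shade shorter because it avoids the density step.
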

\begin{proof} By Lemma~\ref{lem:MinimizeDirichletOverConnectedComponents}, it suffices to treat the case where $A$ is finite and $G \setminus A$ is connected. Fixing a suitable model of $G$, we may also assume that all points of $A$ belong to the vertex set $V$. Using the min-max principle, we obtain that
\begin{align*}
	\lambda^{\min, D}_0 (G,A) &= \inf_{\mu \in \cN(G \setminus A)} \inf_{\substack{f \in H^1_0(G,A) \\ \|f\|_\infty =1}} \frac{q(f)}{\int_{G} |f|^2 d\mu} \geq \inf_{\mu \in \cN(G \setminus A)} \inf_{\substack{f \in H^1_0(G,A) \\ \|f\|_\infty =1}} q(f) \\
	&=  \inf_{\substack{f \in H^1_0(G,A) \\ \|f\|_\infty =1}} q(f) = \inf_{\substack{f \in H^1_0(G,A) \\ f \ge0, \, \max f = 1}} q(f).
\end{align*}
For every $f \ge 0$ in $H^1_0(G,A)$ with $\max f =1$, fix a point $x_f \in G$ with $f(x_f)=1$. Consider the unique function $\widetilde{f} \in H^1(G)$ such that 
\[
\begin{cases}
	\widetilde{f} (x) = f(x), & \text{ if } x \in V \cup \{x_f\}, \\
	f \text{ harmonic }, & \text{ on } G\setminus (V \cup \{x_f\} ).
\end{cases}
\]
We have $q(\widetilde f) \le q(f)$ by Proposition~\ref{Prop: Harmonic extension} and $\widetilde{f} \in H_0^1(G,A)$, since $A \subset V$. Setting $F := \{\widetilde{ f} \; | \; f \in H^1_0 (G,A), \, f \ge 0, \, \max f = 1 \}$, the above leads to
\[
\lambda^{\min, D}_0(G,A)  \geq \inf_{h \in F} q(h). 
\]

Consider a sequence $(\widetilde f_n )_n \subset F$ such that $q( \widetilde f_n ) \to \inf_{h \in F} q(h)$ for $n \to \infty$. Since $0 \le \widetilde f_n \le 1$ and $G$ is compact, we may suppose that the values $\widetilde f_n (v)$ for $v \in V$ have limits $\alpha_v \in \R$, and moreover the corresponding points $x_{f_n} \in G$ converge to a point $x_0 \in G$. We would like to define a function $g \in H^1(G)$ by requiring that
\[
\begin{cases}
	g(v) = \alpha_v, & \text{for all $v \in V$}, \\
	g(x_0) = 1, \\
	g \text{ harmonic }, & \text{ on } G\setminus (V \cup \{x_0\} ).
\end{cases}
\]
In order to prove that $g$ is well-defined, we have to verify that, if $x_0 = u$ is a vertex, then we also have $\alpha_u = 1$. However, this follows from the estimate
\begin{align*}
	|\alpha_u - 1|^2 = \lim_{n \to\infty} | \widetilde f_n (u) - \widetilde f_n (x_{f_n}) |^2 \le  \lim_{n \to\infty} \varrho(u, x_{f_n}) q(  \widetilde f_n ) = 0.
\end{align*}
By construction, the function $g$ belongs to $H^1_0(G,A)$. Using that the energy of an edgewise linear function $h \in H^1(G)$ equals $q(h) = \sum_{e = vw} (h(v)-h(w))^2/\ell(e)$, one can further verify that $\lim_{n \to \infty} q( \widetilde f_n ) \ge q(g)$.
Setting $\mu= \delta_{x_0}$, we then obtain that
\[
\lambda^{\min, D}_0(G,A) \ge  \lim_{n \to \infty} q( \widetilde f_n ) \ge q(g) = \frac{q(g)}{\|g\|^2_{L^2(G, \mu)}} \ge \lambda_0 (H_{\mu}^D) \ge \lambda^{\min, D}_0(G,A)
\] 
and the first statement is proved.

Assume that $\mu \in \cN(G\setminus A)$ satisfies $\lambda_0(H_\mu^D) = \lambda_{0}^{\min, D}(G,A)$. Let $f$ be a non-negative eigenfunction for $\lambda_0(H_\mu^D)$ with $\max_{x\in G} f(x) = 1$. With Proposition \ref{Prop: Max and Min in harmonic functions}, we can choose $x \in \supp \mu\setminus A$ with $f(x)=1$. Further let $g \in H^1_0(G, A)$ be the unique solution to the equations
\[
\begin{cases}
	g(x) = 1, \\
	g \equiv 0\text{ on $A$},\\
	g \text{ harmonic on } G\setminus (A\cup \{x\}).
\end{cases}
\]
Due to Proposition \ref{Prop: Harmonic extension}, we have 
\[
\lambda_{0}^{\min, D}(G,A) = \lambda_0(H_\mu^D) = \frac{q(f)}{\int_G |f|^2 d\mu} \geq q(g) = \lambda_0(H_{\delta_x}^D) \geq \lambda_0^{\min, D}(G,A). \]
This equality means that $g = f$ and $\int_G |f|^2 d\mu = 1$. In particular, $f= 1$ $\mu$-almost everywhere. But $f=g$ takes it maximum only at $x$ due to Proposition~\ref{Prop: Max and Min in harmonic functions}. It follows that $\mu = \delta_x$, and the proof is complete.
\end{proof}

\begin{rmk}
The infimum in the definition of $\lambda_k^{\min,D}$ is in fact attained for all $k \in \N_0$. This can be shown by arguments similar to the proof of Theorem~\ref{thm: spec conv }.
\end{rmk}

Our next objective is to introduce a notion of graph partitions adapted to our context. A (not necessarily exhaustive) \emph{weighted $n$-partition} of a metric graph $G$ is a collection
\[
\mathcal{P}= \big \{ (G_1,\omega_1), \dots, (G_n, \omega_n) \big \}
\]
where $G_1, \dots, G_n$ are closed, non-empty and connected subsets of $G$ with mutually disjoint interiors, and $\omega_1, \dots \omega_n$ are strictly positive reals with $\sum_{i=1}^n \omega_i=1$.
Note that we only use the terminology partition informally, since the sets $G_i$ may intersect on their boundaries, and their union might be a proper subset of $G$.
The \emph{set of weighted $n$-partitions of a metric graph $G$} is denoted by $\mathfrak{P}_n(G)$.

Given a weighted partition $\cP \in \mathfrak{P}_n(G)$, every closed set $G_i$ in $\cP$ is itself naturally a metric graph. We endow $G_i$ with the Dirichlet set $\partial G_i$, that is, the topological boundary of $G_i$ in $G$. Then $\cP$ has an inherent \emph{Dirichlet energy} given by
\[
\Lambda (\cP) := \max_{i =1, \dots,n} \frac{1}{\omega_i}\lambda_{0}^{\min, D} (G_i, \partial G_i). 
\]  
Minimizing the energy with respect to all partitions leads to 
\[
\mathcal{L}_{n} (G) := \inf_{\cP \in \mathfrak{P}_n} \Lambda (\cP). 
\]
Finding a minimizing partition, i.e.,~a partition $\cP \in \mathfrak{P}_n(G)$ with $\Lambda (\cP) = \mathcal{L}_{n} (G)$, is a hard problem in general. However, for partitions consisting only of $n=2$ pieces, we have the following connection to the first optimal eigenvalue. 
\begin{thm} \label{thm:SpectralPartitioning}
Let $G$ be a metric graph. Then 
\[
\mathcal{L}_2(G) = \lambda_{1}^{\min}(G). 
\]
Moreover, if $\mu \in \cN$ is a minimizing measure for $\lambda_1^{\min}(G)$ and $f \in L^2 (G, \mu)$ is an eigenfunction for $\lambda_1(H_\mu)$, then the nodal domains
\begin{align*}
	N^+= \overline{\{x \in G\;|\; f(x)> 0\}} \qquad \text{and} \qquad N^-= \overline{\{x \in G\;|\; f(x) < 0\}}
\end{align*}
together with the weights $\omega_+ = \omega_- = \frac{1}{2}$ form a minimizing $2$-partition for $\mathcal{L}_2(G)$.
\end{thm}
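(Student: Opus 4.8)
The plan is to establish the equality $\mathcal{L}_2(G) = \lambda_1^{\min}(G)$ by proving the two inequalities separately, and to obtain the minimality of the nodal partition as a byproduct of the upper bound. Throughout, I would use the characterization $\lambda_1^{\min}(G) = \min\{q(f) : f \in H^1(G),\ \max f = 1,\ \min f = -1\}$ from Corollary~\ref{Cor: Alternative charakterization of lambda 1} together with the fact (Lemma~\ref{lem:MinimizingMeasuresDirichletEigenvalue}) that each optimal Dirichlet eigenvalue is realized by a Dirac measure.

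For the lower bound $\mathcal{L}_2(G) \ge \lambda_1^{\min}(G)$, I would fix an arbitrary $\cP = \{(G_1,\omega_1),(G_2,\omega_2)\} \in \mathfrak{P}_2(G)$ and glue together the two Dirichlet minimizers. By Lemma~\ref{lem:MinimizingMeasuresDirichletEigenvalue}, for each $i$ there is a point $x_i \in G_i \setminus \partial G_i$ and a function $g_i \in H^1_0(G_i,\partial G_i)$, harmonic on $G_i \setminus \{x_i\}$ with $g_i(x_i) = 1$ and $q(g_i) = \lambda_0^{\min,D}(G_i,\partial G_i)$; the maximum principle (Proposition~\ref{Prop: Max and Min in harmonic functions}) gives $0 \le g_i \le 1$. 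Setting $h := g_1$ on $G_1$, $h := -g_2$ on $G_2$ and $h := 0$ on $G \setminus (G_1 \cup G_2)$ produces, since each $g_i$ vanishes on $\partial G_i$, a continuous function $h \in H^1(G)$ with $\max h = 1$ and $\min h = -1$. Its energy splits over the pieces as $q(h) = q(g_1) + q(g_2) = \lambda_0^{\min,D}(G_1,\partial G_1) + \lambda_0^{\min,D}(G_2,\partial G_2)$. Since $\lambda_0^{\min,D}(G_i,\partial G_i) \le \omega_i \Lambda(\cP)$ by the definition of $\Lambda$, this yields $q(h) \le (\omega_1+\omega_2)\Lambda(\cP) = \Lambda(\cP)$, and Corollary~\ref{Cor: Alternative charakterization of lambda 1} gives $\lambda_1^{\min}(G) \le q(h) \le \Lambda(\cP)$. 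Taking the infimum over $\cP$ proves the bound.

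For the upper bound, I would take the minimizing measure $\mu = \mu_{x,y}$ from Theorem~\ref{thm: characerisation of lambda1} together with its eigenfunction, normalized so that $f(x) = 1 = \max f$ and $f(y) = -1 = \min f$ (possibly after swapping $x,y$). The key identity is $q(f|_{N^+}) = q(f|_{N^-}) = \tfrac12 \lambda_1^{\min}(G)$: testing the weak eigenvalue equation~\eqref{eq:RepresentationTheorem} against $\varphi = f^+ := \max(f,0) \in H^1(G)$ gives $q(f|_{N^+}) = q(f,f^+) = \lambda_1(H_\mu)\int_G f f^+\, d\mu = \tfrac12\lambda_1(H_\mu)$, using $(f^+)' = f'\,\one_{\{f>0\}}$ almost everywhere and the values of $f$ at $x,y$; the claim for $N^-$ is symmetric. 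Because $f$ vanishes on $\partial N^+ \subseteq \{f=0\}$ and $x \in N^+ \setminus \partial N^+$, the restriction $f|_{N^+}$ is admissible for the variational characterization~\eqref{eq:VariationalCharacterizationDirichlet} of $\lambda_0(H^D_{\delta_x})$, so that $\lambda_0^{\min,D}(N^+,\partial N^+) \le \lambda_0(H^D_{\delta_x}) \le q(f|_{N^+}) = \tfrac12\lambda_1^{\min}(G)$, and likewise for $N^-$. With $\omega_+ = \omega_- = \tfrac12$ this gives $\Lambda(\cP_{\mathrm{nodal}}) \le \lambda_1^{\min}(G)$ for the nodal partition $\cP_{\mathrm{nodal}}$, which combined with the lower bound forces $\mathcal{L}_2(G) = \lambda_1^{\min}(G)$ and shows $\cP_{\mathrm{nodal}}$ to be minimizing.

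The step I expect to require the most care is verifying that $\cP_{\mathrm{nodal}} = \{(N^+,\tfrac12),(N^-,\tfrac12)\}$ is a legitimate $2$-partition, in particular that $N^+$ and $N^-$ are connected. Nonemptiness and closedness are immediate; since $f$ is harmonic off $\{x,y\}$ and hence piecewise linear, the set $\{f=0\}$ is finite, so the interiors of $N^+$ and $N^-$, being contained in $\{f \ge 0\}$ and $\{f \le 0\}$ respectively, can only meet inside $\{f=0\}$ and are therefore disjoint. For connectedness I would argue by contradiction: a connected component $C$ of $\{f > 0\}$ not containing $x$ is disjoint from $\{x,y\}$, so $f$ is harmonic on $C$ with boundary values $0$ on $\partial C$, whence $f \le 0$ on $C$ by Proposition~\ref{Prop: Max and Min in harmonic functions}, contradicting $f > 0$ on $C$. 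Thus $\{f>0\}$ and its closure $N^+$ are connected, and symmetrically for $N^-$, completing the verification.
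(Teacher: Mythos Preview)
Your argument is correct and follows essentially the same route as the paper. The two places where your presentation differs are both minor: for the bound $\lambda_1^{\min}(G)\le\Lambda(\cP)$ you invoke Corollary~\ref{Cor: Alternative charakterization of lambda 1} with the test function $h=g_1-g_2$, whereas the paper builds the measure $\mu=\omega_1\delta_{x_1}+\omega_2\delta_{x_2}$ and tests $\sqrt{\omega_2/\omega_1}\,f_1-\sqrt{\omega_1/\omega_2}\,f_2$ directly in the Rayleigh quotient; and for connectedness of $N^\pm$ you use the maximum principle on a putative extra component, while the paper argues via the energy inequality $q(f)\ge q(f|_{N_1^+\cup N_1^-})\ge\lambda_1^{\min}(G)$. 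Both alternatives are clean and arguably a touch more direct than the paper's versions. One small wording point: your $g_i$ is harmonic on $G_i\setminus(\partial G_i\cup\{x_i\})$, not on all of $G_i\setminus\{x_i\}$, though this does not affect the argument.
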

\begin{proof}
Let $\mu = \mu_{x,y}$ be a minimizing measure from Theorem~\ref{thm: characerisation of lambda1}, and $f=f_{x,y}$ the corresponding eigenfunction. Then the nodal domains $N^+$ and $N^-$ are closed, non-empty and have disjoint interiors. Their connectivity can be deduced from the min-max principle. That is, enumerate the connected components of $N^+$ and $N^-$ as $N_1^+, \dots, N_p^+$ and $N_1^-, \dots, N_q^-$, respectively, with $x \in N^+_1$ and $y \in N^-_1$. Then by Corollary \ref{Cor: Alternative charakterization of lambda 1},
\[
\lambda_{1}(H_\mu) = q(f) \geq q(f|_{N^+_1 \cup N^-_1}) \geq \lambda_{1}(H_\mu), 
\]
so $f$ is constant on each connected component of $G\setminus (N^+_1 \cup N^-_1)$. Due to the continuity of $f$, this means that $f \equiv 0$ on $G\setminus (N^+_1 \cup N^-_1)$, and thus $p=q =1$.

It follows that $\cP=\{(N^+, 1/2),(N^-, 1/2)\}$ is a weighted $2$-partition. Note that the restriction $f|_{N^+}$ is an eigenfunction for the operator $H_{\delta_x}^D$ on $N^+$ with eigenvalue $\lambda_{1}(H_\mu)/2$. The same holds for $\delta_{y}$ on $N^-$. Thus, 
\[
\lambda_{1}^{\min}(G) = \lambda_{1}(H_\mu) \ge \Lambda (\cP) \geq \mathcal{L}_2 (G). 
\]

Conversely, let $\cP=\{(G_1,\omega_1), (G_2, \omega_2)\}$ be an arbitrary weighted $2$-partition. By Lemma~\ref{lem:MinimizingMeasuresDirichletEigenvalue}, we can find points $x_i \in G_i $, $i=1,2$, such that the measure $\mu_i := \delta_{x_i}$ minimizes the first  Dirichlet eigenvalue on $G_i$. Set $\mu := \omega_1 \mu_1 + \omega_2 \mu_2 \in \cN$. Fix a $\mu_i$-normalized, non-negative eigenfunction $f_i \in H_0^1(G_i, \partial G_i)$ for $\lambda_0(H_{\mu_i}^D)$ on $G_i$. We extend $f_i$, $i=1,2$, by zero to all of $G$ and consider the function
\[
f:=\sqrt{\frac{\omega_2}{\omega_1}} f_1-\sqrt{\frac{\omega_1}{\omega_2}}f_2 \in H^1(G).
\]
By construction, $f \perp_{\mu} \one_{G}$ and $\|f\|_\mu =1$, so that 
\begin{align*}
	\lambda_{1}^{\min}(G) \leq \lambda_{1}(H_\mu) \leq q(f) &= \frac{\omega_2}{\omega_1}   \int_{G_1} |f_1'|^2dx + \frac{\omega_1}{\omega_2}\int_{G_2} |f_2'|^2dx \\
	&= \omega_2 (\frac{1}{\omega_1}\lambda_{0}(H_{\mu_1}^D)) + \omega_1(\frac{1}{\omega_2}\lambda_{0}(H_{ \mu_2 }^D)) \leq  \Lambda(\cP).
\end{align*}
Since $\cP\in \mathfrak{P}_2(G)$ was arbitrary, we have proved the statement. 
\end{proof}

The analog of Theorem~\ref{thm:SpectralPartitioning} for higher optimal eigenvalues fails. For example, $\lambda_{2}^{\min}(C) < \mathcal{L}_{3}(C)$ for the circle $C$. Nevertheless, we pose the following conjecture, which constitutes a weaker version of Theorem~\ref{thm:SpectralPartitioning} for $k \ge 1$.

\begin{conjecture}
Let $\cP = (G_i, \omega_i)_{i=0, \dots, k}$ be a minimizing partition for $\mathcal{L}_{k+1}(G)$ and $x_i$, $i=0, \dots, k$, be points on $G$ such that $\lambda_0^{\min, D}(G_i, \partial G_i)$ is realized by the Dirac measure $\delta_{x_i}$. Then the measure 
\[
\mu = \sum_{i=0}^{k} \omega_i \delta_{x_i}
\]
is minimizing, i.e.,
\[
\lambda_{k}^{\min}(G) = \lambda_{k}(H_\mu). 
\]
In particular, there is a minimizing measure for $\lambda_{k}^{\min}(G)$ supported on $k+1$ points. 
\end{conjecture}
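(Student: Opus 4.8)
The plan is to mirror the two-sided argument of Theorem~\ref{thm:SpectralPartitioning}. Write $\mu=\sum_{i=0}^k\omega_i\delta_{x_i}$ for the measure attached to the minimizing partition $\cP$. Since $\lambda_k(H_\mu)\ge\lambda_k^{\min}(G)$ holds trivially, the whole content is the reverse inequality $\lambda_k(H_\mu)\le\lambda_k^{\min}(G)$. I would first dispose of the ``easy half'', the bound $\lambda_k(H_\mu)\le\mathcal{L}_{k+1}(G)=\Lambda(\cP)$, and only then confront the genuinely hard estimate.

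For the easy half I would generalize the construction from the second part of the proof of Theorem~\ref{thm:SpectralPartitioning}. On each piece $G_i$ let $f_i$ be the non-negative Dirichlet ground state realizing $\lambda_0^{\min,D}(G_i,\partial G_i)=:a_i$, normalized by $f_i(x_i)=1$ and extended by zero to $G$; then $q(f_i)=a_i$. As $x_i$ lies in the interior of $G_i$ (Lemma~\ref{lem:MinimizingMeasuresDirichletEigenvalue}) and the $G_i$ have disjoint interiors, one has $f_i(x_j)=\delta_{ij}$ and the energy cross terms vanish, so for $f=\sum_i c_if_i$,
\[
R_\mu(f)=\frac{\sum_i c_i^2 a_i}{\sum_i \omega_i c_i^2}\le\max_i\frac{a_i}{\omega_i}=\Lambda(\cP).
\]
The $f_i$ are linearly independent, so $F:=\Span(f_0,\dots,f_k)$ is $(k+1)$-dimensional; arguing as for~\eqref{eq:MinLambda1}, the min--max for $\lambda_k(H_\mu)$ is unchanged if test spaces are taken in $H^1(G)$ rather than in $H^1(G,\mu)$, so $F$ is admissible and $\lambda_k(H_\mu)\le\max_{f\in F}R_\mu(f)\le\Lambda(\cP)=\mathcal{L}_{k+1}(G)$. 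Together with the trivial bound this gives $\lambda_k^{\min}(G)\le\lambda_k(H_\mu)\le\mathcal{L}_{k+1}(G)$.

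The reverse estimate $\lambda_k(H_\mu)\le\lambda_k^{\min}(G)$ is where the difficulty concentrates. The natural attempt generalizes the first part of the proof of Theorem~\ref{thm:SpectralPartitioning}: take a minimizer $\nu$ for $\lambda_k^{\min}(G)$ (which exists by Theorem~\ref{thm: spec conv }) and an eigenfunction $g$ for $\lambda_k(H_\nu)$. On each nodal domain $N_j$ of $g$, the restriction $g|_{N_j}$ is a Dirichlet ground state with Rayleigh quotient $\lambda_k(H_\nu)$, whence $\tfrac{1}{\nu(N_j)}\lambda_0^{\min,D}(N_j,\partial N_j)\le\lambda_k^{\min}(G)$. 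If $g$ had exactly $k+1$ nodal domains, the induced weighted partition $\cP'$ would satisfy $\Lambda(\cP')\le\lambda_k^{\min}(G)$, hence $\mathcal{L}_{k+1}(G)\le\lambda_k^{\min}(G)$; combined with the easy half this would squeeze everything to equality and prove the conjecture.

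The main obstacle is that this last step fails: the number of nodal domains of the $k$-th eigenfunction is only bounded above by $k+1$, and this bound need not be attained. In fact the identity $\mathcal{L}_{k+1}(G)=\lambda_k^{\min}(G)$ is \emph{false} --- for the circle $\lambda_2^{\min}(C)<\mathcal{L}_3(C)$ --- because there the relevant eigenvalue is degenerate and every eigenfunction has only two nodal domains. Thus the conjecture genuinely cannot be reduced to the partition energy $\mathcal{L}_{k+1}(G)$: one must analyze the finite-dimensional operator $H_\mu$ on the $k+1$ points $x_i$ directly and show that its top eigenvalue, which may lie strictly below $\Lambda(\cP)$, nevertheless equals $\lambda_k^{\min}(G)$. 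I would try to extract this from first-order optimality of the minimizing partition --- stationarity of $\Lambda(\cP)$ under perturbations of the interfaces and of the weights $\omega_i$ --- coupled with a perturbation argument based on the continuity in Theorem~\ref{thm: spec conv }, or alternatively from a refined nodal/interlacing count valid specifically for optimal measures. Closing this gap is the crux and the reason the statement is only conjectural.
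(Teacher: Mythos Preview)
The statement is a \emph{conjecture} in the paper; there is no proof to compare against. The authors explicitly pose it as open, noting (just before the statement) that the identity $\mathcal{L}_{k+1}(G)=\lambda_k^{\min}(G)$ fails already for the circle with $k=2$.

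Your analysis is accurate and lines up with the paper's own discussion. The ``easy half'' you prove, $\lambda_k^{\min}(G)\le\lambda_k(H_\mu)\le\mathcal{L}_{k+1}(G)$, is correct and is exactly the computation the paper carries out for $k=1$ in the second half of the proof of Theorem~\ref{thm:SpectralPartitioning}; your extension to general $k$ is straightforward and valid (the cross terms in $q$ vanish because the $f_i$ have disjoint supports up to a Lebesgue-null set, and the passage from $H^1(G,\mu)$ to $H^1(G)$ in the min--max is justified by Lemma~\ref{lem:OrthoDecompositionH^1}). You also correctly diagnose why the nodal-domain argument from the first half of that proof breaks for $k\ge 2$: the $k$-th eigenfunction need not have $k+1$ nodal domains, and the circle example shows the resulting inequality $\mathcal{L}_{k+1}(G)\le\lambda_k^{\min}(G)$ can fail strictly. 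Your closing remark that the conjecture must be attacked without routing through $\mathcal{L}_{k+1}(G)$ is precisely the point; the paper offers no further progress on this, so your proposal is as far as one can go with the tools at hand.
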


We close the section with an overview of all equivalent characterizations. 
\begin{thm}
Let $G$ be a metric graph. Then:	
	\[
	\lambda_{1}^{\min}(G) = \min\limits_{\mu \in \cN} \lambda_{1}(H_\mu) = \min\limits_{\substack{x,y \in G \\ x\neq y}} q(f_{x,y})  = \min\limits_{\substack{f \in H^1(G) \\ \max f =1\\ \min f= -1}} q(f) = \dfrac{4}{\diam_r(G)}  = \mathcal{L}_{2}(G). 
	\]
\end{thm}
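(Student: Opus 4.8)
The plan is to assemble the statement directly from the results established earlier in the section, reading the chain of equalities from left to right; this final theorem is a consolidation rather than a new assertion. The first equality is nothing but the definition of $\lambda_1^{\min}(G)$. For the second equality $\min_{\mu \in \cN} \lambda_1(H_\mu) = \min_{x \neq y} q(f_{x,y})$, I would invoke Theorem~\ref{thm: characerisation of lambda1}, which shows that a minimizing measure exists and has the form $\mu_{x,y}$, together with the reformulation~\eqref{eq:ExpressLamda1Fxy} identifying $\lambda_1(H_{\mu_{x,y}})$ with $q(f_{x,y})$. The third equality $\min_{x \neq y} q(f_{x,y}) = \min_{\max f = 1,\, \min f = -1} q(f)$ is precisely the content of Corollary~\ref{Cor: Alternative charakterization of lambda 1}.

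The remaining two equalities are likewise already at hand. The identity with $4/\diam_r(G)$ is Theorem~\ref{Lem: Resitance metric and optimal eigenvalue}, obtained there from the representation~\eqref{eq:RepresentationResistance} of the resistance via $q(f_{x,y})$; and the identity with $\mathcal{L}_2(G)$ is Theorem~\ref{thm:SpectralPartitioning}. Thus the whole chain follows simply by concatenating these references, and the proof reduces to citing them in order.

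Since every equality has already been proved, there is no genuine obstacle here; the only point requiring a word of care is that each occurrence of $\min$ (rather than $\inf$) is justified, i.e.\ that the extrema are attained. This is guaranteed by the existence of minimizers in the cited statements: Theorem~\ref{thm: characerisation of lambda1} produces a minimizing measure $\mu_{x,y}$, the map $(x,y) \mapsto q(f_{x,y}) = 4/r(x,y)$ attains its minimum by compactness of $G$ together with continuity of the resistance metric (so that $\diam_r(G)$ is realized by some pair of points), and the minimizing function in Corollary~\ref{Cor: Alternative charakterization of lambda 1} is realized by the eigenfunction $f_{x,y}$ itself. Collecting these observations completes the argument.
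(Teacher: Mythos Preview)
Your proposal is correct and matches the paper's approach exactly: the theorem is stated in the paper without proof as an ``overview of all equivalent characterizations,'' and you have correctly identified each equality as the content of Theorem~\ref{thm: characerisation of lambda1} (with~\eqref{eq:ExpressLamda1Fxy}), Corollary~\ref{Cor: Alternative charakterization of lambda 1}, Theorem~\ref{Lem: Resitance metric and optimal eigenvalue}, and Theorem~\ref{thm:SpectralPartitioning}, respectively. Your additional remark on attainment of the minima is a nice touch but not strictly needed, since each cited result already phrases its conclusion as a minimum.
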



\section{Further properties of the first optimal eigenvalue} \label{sec:PropertiesFirstEigenvalue}
In what follows, we discuss further properties of the first optimal eigenvalue $\lambda_1^{\min}$. Throughout this section, let $G = (V,E, \ell)$ be a metric graph.

\subsection{Basic properties}
The next result further specifies the minimizing measures for $\lambda_1^{\min}(G)$. Recall that $\deg(s)$ denotes the degree of a point $s \in G$, with $\deg(s) := 2$ for all $s \in G \setminus V$.

\begin{thm}\label{thm: points avoid vertices}
	Let $\mu \in \cN$ be a measure with $\lambda_{1}^{\min}(G)=\lambda_1(H_{\mu})$ and write $\mu$ as $\mu = \mu_{x,y} = (\delta_x + \delta_y)/2$ for two points $x, y \in G$. Then $\deg(x), \deg(y) \in \{1,2\}$, i.e.~the two points lie at the end of a pending edge or in the interior of an edge.  
\end{thm}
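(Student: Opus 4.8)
The plan is to recast the statement in terms of the resistance metric and then exclude degree-$\ge 3$ points by a local perturbation of $x$. By Theorem~\ref{Lem: Resitance metric and optimal eigenvalue}, a measure $\mu_{x,y}$ minimizes $\lambda_1$ exactly when the pair $(x,y)$ realizes the resistance diameter, i.e.\ $r(x,y)=\diam_r(G)$ is maximal; equivalently, by~\eqref{eq:RepresentationResistance}, the conductance $C(x,y):=1/r(x,y)=q(f_{x,y})/4$ is globally minimal among pairs of distinct points. So it suffices to show that a pair minimizing $C$ cannot have $\deg(x)\ge 3$ (the claim for $y$ then follows by the symmetry $f\mapsto -f$). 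First I would fix, using Remark~\ref{rem:DtNGeneral}, a model of $G$ in which both $x$ and $y$ are vertices. This normalization is crucial: it guarantees that $y$ never lies in the interior of an edge incident to $x$, and in particular it breaks any cycle through both $x$ and $y$ into genuine edges rather than a loop.

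The heart of the argument is a perturbation. I would slide $x$ a distance $t$ along one of the half-edges $h$ emanating from it, obtaining a point $x_t$, and study the one-sided derivative of $t\mapsto C(x_t,y)$ at $t=0^+$. Using the variational description $C(x_t,y)=\min\{q(g)\mid g(x_t)=1,\ g(y)=0\}$ from~\eqref{eq: resistance characerisation}, introducing $x_t$ as a new vertex splits the relevant edge into two segments of lengths $t$ and $\ell(e)-t$, so the energy becomes an explicit smooth function $E(\,\cdot\,;t)$ of $t$ and of the nodal values. The envelope theorem then gives $\tfrac{d}{dt}C(x_t,y)=\partial_t E$ evaluated at the minimizer, and $\partial_t E$ only sees the two segments adjacent to $x_t$, producing $-(\text{current toward }x)^2+(\text{current toward the far end})^2$. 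Letting $t\to 0^+$ and writing $J_h\ge 0$ for the current flowing out of $x$ through $h$ (so that $\sum_h J_h=C(x,y)=:C$ by Lemma~\ref{lem: energy between two points}), this yields the clean formula
\[
\frac{d}{dt}\Big|_{t=0^+} C(x_t,y)=J_h^2-\Big(C-J_h\Big)^2 .
\]

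With this formula the conclusion is immediate. Since $(x,y)$ globally minimizes $C$, sliding $x$ into any incident half-edge cannot decrease $C$, so the one-sided derivative must be $\ge 0$ for every half-edge $h$; as the currents are non-negative, this forces $J_h\ge C/2$. Summing over the $\deg(x)$ half-edges at $x$ and using $C>0$ gives $C=\sum_h J_h\ge \deg(x)\cdot C/2$, hence $\deg(x)\le 2$. This handles loops uniformly: both half-edges of a loop carry zero current, so a loop would instantly violate $J_h\ge C/2$; this is precisely why the normalization, which has already turned any loop through $y$ into two parallel edges, must be in force before perturbing (otherwise the circle, whose optimal vertex has degree two, would be wrongly excluded).

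The step I expect to be the main obstacle is establishing the derivative formula rigorously, in particular controlling the degenerate configurations: a half-edge belonging to a loop at $x$, or to an edge whose far endpoint is $y$ itself. In both cases the three-terminal reduction of $G\setminus\mathring{e}$ degenerates, and one must check by a direct series–parallel computation that the limit $t\to 0^+$ still produces $J_h^2-(C-J_h)^2$ (with $J_h=0$ for a loop). I would verify these as short explicit computations, relying on the normalization $x,y\in V$ to rule out the genuinely singular case in which $y$ lies in the interior of the perturbed edge.
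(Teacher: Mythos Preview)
Your approach is correct and genuinely different from the paper's. The paper proceeds by graph surgery: it first contracts edges on which the optimal eigenfunction is constant, then glues together vertices on which $f_{x,y}$ takes the same value to obtain a pumpkin chain, collapses all but the first pumpkin to a path, and finally produces by an explicit algebraic computation a specific interior point $x'$ with $q(f_{x',y})<q(f_{x,y})$ whenever $\deg(x)\ge 3$. Your argument bypasses all of this structure: you work directly with the first-order optimality condition for the conductance $C(\cdot,y)$ at $x$, and the clean identity
\[
\frac{d}{dt}\Big|_{t=0^+} C(x_t,y)=J_h^{2}-(C-J_h)^{2}
\]
turns global minimality into the edgewise inequality $J_h\ge C/2$, from which $\deg(x)\le 2$ is immediate by summation. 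This is more conceptual and yields, as a by-product, the extra information that at a degree-two minimizer the two outgoing currents must be equal.

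Two comments on the execution. First, the analytic step you flag is indeed routine once one reduces to finitely many nodal values: for $t\in(0,\ell(e))$ the energy is a positive-definite quadratic form with coefficients smooth in $t$, so $C(x_t,y)$ is $C^1$ there and the envelope theorem applies; combined with the continuity of $t\mapsto C(x_t,y)$ at $t=0$ (e.g.\ from Theorem~\ref{thm: spec conv } or directly from \eqref{eq: resistance characerisation}) the mean value theorem gives the one-sided derivative as the limit you compute. Second, the degenerate cases you single out do not actually require separate series--parallel computations: once $x,y\in V$, the identification of the two limiting currents as $J_h$ (toward the far endpoint) and $\sum_{h'\neq h}J_{h'}=C-J_h$ (via Kirchhoff at $x$) goes through verbatim even when the far endpoint is $y$ or when $e$ is a loop at $x$; in the loop case both half-edge currents vanish by the maximum principle and your formula correctly gives derivative $-C^2<0$.
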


\begin{proof}
	By Theorem~\ref{thm: characerisation of lambda1}, the minimizing measure $\mu$ is of the form $\mu = \mu_{x,y}$ for some points $x, y\in G$. Upon refining the model, we may assume that $x$ and $y$ are vertices.

	Assume that $\deg(x) > 2$ and let $f=f_{x,y}$. First contract every edge, where $f$ is constant. The new graph $G'$ has a bigger first optimal eigenvalue by Lemma~\ref{lem:DecreasingEdgeLength}. But we have $q(f) = q(f|_{G'})$, so the first optimal eigenvalue in fact coincides by Corollary~\ref{Cor: Alternative charakterization of lambda 1}. This also shows that $\mu_{x,y}$ is a minimizing measure for $G'$. Thus, we may assume that $G=G'$, that is, $f$ is non-constant on every edge.
	
	Since the function $f$ is linear and non-constant on edges, all its level sets $\{z \in G\; | \; f(z)=c\}$, $ c \in \R$, are finite. Denote the values, which are taken by $f$ in vertices, by $a_1 = 1 = f(x) > a_2 > \dots > a_n = -1 = f(y)$. We assume that for each $i$, the level set $V_i := \{z \in G | f(z) = a_i\}$ consists only of vertices. This can always be achieved by refining the model. In particular, $V = \bigsqcup_i V_i$.

	We construct a new graph $\widetilde G$ by glueing together all vertices $v \in V_i$ for each set $V_i$. Then $\widetilde G$ is a pumpkin chain with vertices $\widetilde w_1, \dots \widetilde w_n$ corresponding to the values $a_1, \dots a_n$. The points $x$ and $y$ can be identified with the endpoints $\widetilde w_1$ and $\widetilde w_n$ of $\widetilde G$, so that the pumpkin chain $\widetilde G$ starts with a $\deg(x)$-pumpkin and ends with a $\deg(y)$-pumpkin.

	The glueing induces a natural surjective map $\Phi\colon G \to \widetilde{G}$. By construction, the function $f$ is the pullback $f = \widetilde{ f} \circ \Phi $ of a function $\widetilde f \in H^1(\widetilde G)$ with $q(f) = q(\widetilde{ f})$. On the other hand, for every function $g \in H^1(\widetilde{G})$, the pullback $g \circ \Phi \in H^1(G)$ has the same energy, maximum and minimum. By Corollary~\ref{Cor: Alternative charakterization of lambda 1}, we infer
	\[
	q(\widetilde{ f})	\geq \lambda_{1}^{\min}(\widetilde{G}) \geq \lambda_{1}^{\min}(G) = q(f) = q(\widetilde{ f}).
	\]
	Applying Proposition \ref{prop: surgery}, we can collapse every pumpkin in $\widetilde G$ except the first one into a path of suitable length, to obtain a graph $\widehat{G}$ with the following property: for the corresponding surjective mapping $\Psi\colon \widetilde{G} \to \widehat{G}$, there exists a unique function $\widehat{f}$ with $\widehat{f} \circ \Psi = \widetilde{ f}$ and $q(\widehat{f}) = q(\widetilde{ f})$. The same argument as above implies that
	\[
	\lambda_{1}^{\min}(\widehat{G}) = q(\widehat{f}). 
	\]
	
	We proceed by calculating the energy $q(\widehat{f})$. The graph $\widehat{G}$ contains a $\deg(x)$-pumpkin, having $x$ as its left endpoint and a vertex $z$ as its right endpoint. Either $z=y$, and $\widehat G$ consist only of the pumpkin, or it contains an additional edge $e_y$ attached at $z$, having $y$ as its other endpoint. Let $\ell_1\geq \dots\geq \ell_{\deg (x)} >0$ be the lengths of the edges $e_1, \dots, e_{\deg(x)}$ in the pumpkin at $x$ and let $\ell_0$ be the length of the remaining graph. If the edge $e_y$ is absent, we have $\ell_0 = 0$. Further set
	\[
	S := \sum_{i=2}^{\deg(x)} \frac{1}{\ell_i}.
	\]
	In terms of these quantities, the energy of $\widehat f$ equals
	\[
	q(\widehat{f}) =  4 \, \frac{S + \frac{1}{\ell_1}}{\ell_0S + \frac{\ell_0}{\ell_1} +1}.
	\] 
	This follows from the fact that, by Corollary~\ref{Cor: Alternative charakterization of lambda 1} and Proposition~\ref{Prop: Harmonic extension}, the function $\widehat f$ is harmonic on $\widehat G\setminus \{x,y\}$ with $f(x) = 1$, $f(y) =-1$, and thus takes the values
	\[
	f(x) = 1, \qquad f(y) = - 1, \qquad  f(z) = \frac{\ell_0 S+ \frac{\ell_0}{\ell_1} - 1}{\ell_0 S + \frac{\ell_0}{\ell_1} +1},
	\]
	in the vertices $x,y$ and $z$ of $\widehat G$. 
	
	Using the assumption that $\deg(x) \ge 3$, we then construct a function $\widehat g \in H^1(\widehat G)$ with smaller energy than $\widehat f$. Note first that
	\begin{equation} \label{eq:DefinitionMysteryPoint}
		\ell_1S = \frac{\ell_1}{\ell_2} + \dots + \frac{\ell_1}{\ell_{\deg(x)}} \geq \deg(x) -1 \geq 2. 
	\end{equation}
	This permits us to define a point in the interior of the edge $e_1=[0, \ell_1]$ emanating at $x$ as $x' := \frac{\ell_1S  -1}{2S}$. 
	Let $\widehat g := f_{x', y} \in H^1(\widehat G)$ be the harmonic function on $\widehat G \setminus \{x',y\}$ defined in \eqref{eq:Definitionfxy}. We use Proposition \ref{prop: surgery} to calculate the energy of $\widehat{g}$. First collapse the edges $e_2, \dots, e_{\deg x}$ to a new edge of length $S^{-1}$. Then the new graph is a $[2,1]$-pumpkin chain between the vertices $x', z$ and $y$. By again collapsing the $2$-pumpkin, we get a new edge of length $\frac{\ell_1S+1}{4S}$. The resulting graph is a path graph of length $\frac{\ell_1S+1}{4S}+\ell_0$ and thus $\widehat g$ has energy
	\[
	q(\widehat g) = \frac{16S}{(4\ell_0 + \ell_1)S +1}.
	\]
	Using that $\ell_1 S \neq 1$ by~\eqref{eq:DefinitionMysteryPoint}, it is straightforward to deduce that
	\[
	q(\widehat g) <  q(\widehat f) =  \lambda_{1}^{\min}(\widehat{G}),
	\]
	which is a contradiction by Corollary~\ref{Cor: Alternative charakterization of lambda 1}.
\end{proof}

\begin{rmk}
	The idea of reducing a general metric graph to a pumpkin chain via glueing appeared also in~\cite{surgery}, where it was used in context with eigenvalue estimates (see, e.g.~\cite[Proof of Lemma 5.1]{surgery}). Proof techniques involving changes to the geometry of a metric graph are sometimes also called graph surgery methods.
\end{rmk}
We omit the proof of the following elementary fact.

\begin{lem} \label{lem:DecreasingEdgeLength}
	Let $G =(V,E, \ell)$ be a metric graph. Shortening an edge $e \in E$, that is, decreasing the length $\ell(e)$, increases the first optimal eigenvalue $\lambda_1^{\min}$. Moreover, contracting an edge $e \in E$, that is, collapsing it into one vertex, also increases $\lambda_1^{\min}$.
\end{lem}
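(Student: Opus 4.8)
The plan is to reduce both assertions to the energy characterization from Corollary~\ref{Cor: Alternative charakterization of lambda 1}, namely $\lambda_1^{\min}(G) = \min\{q(f) : f \in H^1(G),\ \max f = 1,\ \min f = -1\}$, and to exploit it through a pullback argument analogous to the one used in the proof of Theorem~\ref{thm: points avoid vertices}. Write $G$ for the original graph, with $e \in E$ of length $\ell$, and $G'$ for the graph obtained after modifying $e$ (shortening or contraction); the goal is to show $\lambda_1^{\min}(G) \le \lambda_1^{\min}(G')$, which is the precise meaning of the claimed monotonicity. In both cases I will build a continuous surjection $\Phi\colon G \to G'$ that is the identity on $G \setminus \mathring e$, and show that for every admissible competitor $g \in H^1(G')$ (that is, one with $\max g = 1$ and $\min g = -1$) the pullback $f := g \circ \Phi$ is an admissible competitor on $G$ with $q(f) \le q(g)$. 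Granting this, Corollary~\ref{Cor: Alternative charakterization of lambda 1} gives $\lambda_1^{\min}(G) \le q(f) \le q(g)$, and taking the infimum over $g$ yields the desired inequality.

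For the shortening step, let $G'$ carry the edge $e'$ of length $\ell' < \ell$ in place of $e$, and let $\Phi$ act on $e \simeq [0,\ell]$ by the linear dilation $s \mapsto (\ell'/\ell)\,s$ onto $e' \simeq [0,\ell']$. Since $\Phi$ is surjective, $f = g\circ\Phi$ attains exactly the values of $g$, so $\max f = 1$ and $\min f = -1$ hold automatically; this surjectivity is precisely what keeps the normalization intact and is the reason I prefer pulling back over pushing forward. A direct change of variables on the dilated edge shows $q(f|_e) = (\ell'/\ell)\, q(g|_{e'}) \le q(g|_{e'})$, while $f = g$ elsewhere, whence $q(f) \le q(g)$. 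The contraction step is the degenerate case $\ell' = 0$: here $\Phi$ collapses $e$ to a single vertex, so $f$ is constant on $e$, contributes zero energy there, and satisfies $q(f) = q(g)$ together with $\max f = 1$, $\min f = -1$ by surjectivity again.

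The only points needing care --- and the closest thing to an obstacle --- are the bookkeeping that $f = g\circ\Phi$ genuinely lies in $H^1(G)$ (continuity is immediate from continuity of $g$ and $\Phi$, and edgewise $H^1$-regularity is preserved under affine reparametrization, or is trivial on the collapsed edge), and the verification that the reparametrization scales the Dirichlet energy by the factor $\ell'/\ell \le 1$ rather than leaving it unchanged. I expect no genuine difficulty: the surjectivity of $\Phi$ handles the max/min normalization uniformly in all cases, so, unlike a direct manipulation of the minimizer $f_{x,y}$, one avoids any case distinction according to whether the extremal points $x,y$ lie in the interior of $e$. I would finally remark that the inequality is strict whenever the minimizer is non-constant on $e'$, which is the generic situation and justifies the word ``increases.''
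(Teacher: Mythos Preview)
Your argument is correct. The paper actually omits the proof of this lemma entirely, calling it an ``elementary fact,'' so there is no detailed comparison to make. That said, your approach---pulling back competitors through a surjective dilation $\Phi\colon G\to G'$ and invoking the characterization $\lambda_1^{\min}(G)=\min\{q(f):\max f=1,\ \min f=-1\}$ from Corollary~\ref{Cor: Alternative charakterization of lambda 1}---is precisely the technique the paper itself deploys in the proof of Theorem~\ref{thm: points avoid vertices} (where this lemma is used), so it is almost certainly what the authors had in mind. The energy computation $q(f|_e)=(\ell'/\ell)\,q(g|_{e'})$ is correct, the contraction case as the degenerate limit $\ell'=0$ is handled cleanly, and your observation that surjectivity of $\Phi$ is what preserves the $\max/\min$ normalization is exactly the right point to emphasize.
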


\subsection{Estimates via diameter}

Theorem~\ref{Lem: Resitance metric and optimal eigenvalue} characterizes the optimal eigenvalue $\lambda_1^{\min}(G)$ in terms of the resistance metric $r$. If one considers instead the path metric $\varrho$ and its diameter $\diam(G) =\diam_\varrho(G)$, the following estimate holds.
\begin{cor}\label{Cor: First bounds for lambda1}
	Let $G$ be a metric graph. Then
	\begin{equation} \label{eq:Lambda1PathMetricDiameter}
		\frac{4L(G)}{\diam(G)^2} \geq \lambda_1^{\min}(G) \geq \frac{4}{ \diam(G)}.
	\end{equation}
\end{cor}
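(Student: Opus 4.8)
The plan is to deduce both bounds directly from the resistance characterization in Theorem~\ref{Lem: Resitance metric and optimal eigenvalue}, namely $\lambda_1^{\min}(G) = 4/\diam_r(G)$, by comparing the resistance diameter $\diam_r(G)$ with the path diameter $\diam(G) = \diam_\varrho(G)$. The whole corollary is really a statement about sandwiching $\diam_r(G)$ between $\diam(G)^2/L(G)$ and $\diam(G)$, and then inverting.

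For the lower bound $\lambda_1^{\min}(G) \ge 4/\diam(G)$, I would invoke the comparison inequality~\eqref{eq:InequalityResistancePathMetric}, which gives $r(x,y) \le \varrho(x,y) \le \diam(G)$ for all $x,y \in G$. Taking the maximum over $x,y$ yields $\diam_r(G) \le \diam(G)$, and substituting into the characterization gives
\[
\lambda_1^{\min}(G) = \frac{4}{\diam_r(G)} \ge \frac{4}{\diam(G)}.
\]

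For the upper bound $\lambda_1^{\min}(G) \le 4L(G)/\diam(G)^2$, I would fix a pair of points $x,y \in G$ realizing the path diameter, so that $\varrho(x,y) = \diam(G)$ (such a pair exists by compactness of $G$). Lemma~\ref{lem:EasyUpperEstimate} then provides
\[
r(x,y) \ge \frac{\varrho(x,y)^2}{L(G)} = \frac{\diam(G)^2}{L(G)},
\]
whence $\diam_r(G) \ge r(x,y) \ge \diam(G)^2/L(G)$. Inverting and multiplying by $4$ gives the claimed upper bound.

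Since both ingredients are already established, there is no genuine obstacle here; the corollary is essentially immediate once the resistance characterization is in hand. The only mild subtlety worth flagging is that the two bounds use the comparison between $r$ and $\varrho$ in opposite directions (the linear bound $r \le \varrho$ for the lower estimate, and the quadratic lower bound on $r$ for the upper estimate), and that the pair attaining $\diam(G)$ need not be the pair attaining $\diam_r(G)$ — but this causes no problem, since for the upper bound it suffices to use any single pair to bound $\diam_r(G)$ from below.
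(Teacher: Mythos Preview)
Your proof is correct and follows precisely the same route as the paper: both derive the bounds from the resistance characterization $\lambda_1^{\min}(G)=4/\diam_r(G)$ in Theorem~\ref{Lem: Resitance metric and optimal eigenvalue}, using~\eqref{eq:InequalityResistancePathMetric} for the lower bound and Lemma~\ref{lem:EasyUpperEstimate} for the upper bound. Your write-up simply makes the intermediate steps explicit.
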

\begin{proof}
	This follows from Theorem~\ref{Lem: Resitance metric and optimal eigenvalue} and the inequalities for the metrics $\varrho$ and $r$ in~\eqref{eq:InequalityResistancePathMetric} and Lemma~\ref{lem:EasyUpperEstimate}.
\end{proof}
\begin{rmk}
By the above, $\lambda_1(H_\mu) \ge \frac{4}{\diam(G)}$ for every measure $\mu \in \cN$. Taking $\mu$ with $\supp \mu=V$, we recover the estimate for weighted discrete Laplacians from~\cite[Corollary 3.7]{lss} (see also Example~\ref{ex:DiscreteLaplacians}).
\end{rmk}

In particular, $\lambda_1^{\min}(G)$ can be controlled by the diameter and length of a graph. It turns out that upper bounds in terms of only one of these quantities are impossible.

Denote by $\mathscr{G}_L$ the class of metric graphs $G$ having total length $L(G) = L$. Similarly, let $\mathscr{G}'_L$ be the class of metric graphs $G$ with diameter $\diam(G) = L$.

\begin{cor}
	Let $L >0$. Then
	\[
	\sup_{G \in \mathscr{G}_L} \lambda_1^{\min}(G) = \sup_{G \in \mathscr{G}'_L} \lambda_1^{\min}(G) = + \infty.
	\]
\end{cor}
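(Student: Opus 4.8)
The plan is to use the characterization $\lambda_1^{\min}(G) = 4/\diam_r(G)$ from Theorem~\ref{Lem: Resitance metric and optimal eigenvalue}: in both cases it suffices to construct, for each fixed $L > 0$, a sequence of graphs in the respective class whose resistance diameter tends to $0$. The natural building blocks are the pumpkins and pumpkin chains from Definition~\ref{def: pumpkins}, since placing many edges in parallel collapses the resistance between two vertices while keeping the relevant lengths under control.

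For the length class $\mathscr{G}_L$, the argument is immediate and does not even require the resistance. Let $G_n$ be the $n$-pumpkin whose $n$ edges all have length $L/n$, so that $L(G_n) = L$ and $G_n \in \mathscr{G}_L$. A direct inspection shows $\diam_\varrho(G_n) = L/n$, and the lower bound $\lambda_1^{\min}(G) \ge 4/\diam(G)$ from Corollary~\ref{Cor: First bounds for lambda1} then gives $\lambda_1^{\min}(G_n) \ge 4n/L \to \infty$.

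For the diameter class $\mathscr{G}'_L$, I would take $G_n$ to be the $[n,\dots,n]$-pumpkin chain with $n$ pumpkins, each of whose edges has length $L/n$. Then the shortest path between the two extreme vertices has length $L$, and one checks that this is the largest occurring path distance, so $\diam_\varrho(G_n) = L$ and $G_n \in \mathscr{G}'_L$. Here the lower bound from Corollary~\ref{Cor: First bounds for lambda1} is useless, since it equals the constant $4/L$, so one must instead bound the resistance diameter directly from above. Writing $x_0, \dots, x_n$ for the backbone vertices, the $n$ parallel edges between consecutive vertices give $r(x_{k-1}, x_k) = L/n^2$, whence $r(x_i, x_j) = |i-j|\,L/n^2 \le L/n$; moreover any point $p$ lies on an edge at resistance at most $L/n$ from an adjacent backbone vertex. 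Two applications of the triangle inequality for the metric $r$ then yield $r(p,q) \le 3L/n$ for all $p, q$, so that $\diam_r(G_n) \le 3L/n$ and $\lambda_1^{\min}(G_n) = 4/\diam_r(G_n) \ge 4n/(3L) \to \infty$.

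The only real obstacle is the diameter case: unlike the length case, the elementary diameter bound yields nothing, and one must genuinely exploit the fact that parallel edges make the resistance metric much smaller than the path metric. The point requiring care is that the resistance estimate must cover \emph{all} points of the graph, including interior points of edges, and not merely the backbone vertices; the observation that every point sits within resistance $L/n$ of a backbone vertex, combined with the factor $1/n$ gained from the parallel edges, is exactly what closes the argument.
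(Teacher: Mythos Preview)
Your proof is correct. Both you and the paper use the same family for the diameter class $\mathscr{G}'_L$ --- the equilateral $[n,\dots,n]$-pumpkin chain with edge lengths $L/n$ --- and both ultimately exploit that the $n$ parallel edges in each pumpkin collapse the resistance between consecutive backbone vertices to $L/n^2$. The difference lies in the final estimate: the paper locates a pair $x,y$ realizing $\diam_r$, invokes Proposition~\ref{prop: surgery} to replace the $n-2$ inner pumpkins by single edges of length $L/n^2$, and then applies the path-metric lower bound from Corollary~\ref{Cor: First bounds for lambda1} to the resulting simplified graph; you instead bound $\diam_r$ directly by combining $r(x_{k-1},x_k)=L/n^2$ with $r\le\varrho$ on individual edges and the triangle inequality for $r$. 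Your route is slightly more elementary, since it sidesteps the assertion that the $r$-diameter is realized by points in the two outer pumpkins and avoids passing through an auxiliary graph. For $\mathscr{G}_L$ the paper uses the equilateral $n$-star rather than the $n$-pumpkin, but the mechanism --- shrinking path-metric diameter and applying Corollary~\ref{Cor: First bounds for lambda1} --- is identical.
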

\begin{proof}
	Concerning $\mathscr{G}_L$, consider the equilateral $n$-star consisting of $n$ edges of length $L/n$ glued at a central vertex. The diameter in the path metric $\varrho$ is $2L/n$. Thus, by Corollary~\ref{Cor: First bounds for lambda1}, the first optimal eigenvalue tends to $+ \infty$ for $n \to \infty$.

	In $\mathscr{G}_L'$, consider the equilateral $[n\times  n]$-pumpkin chain $W_n$, $n \in \N$ (that is, consisting of $n$ pumpkins with $n$ edges each), with constant length function $\ell \equiv \frac{L}{n}$ (see Definition \ref{def: pumpkins}). Then $\diam (W_n) = L$ and $L(W_n) = L n$. Let $x,y \in G$ be points realizing the $r$-diameter $\diam_r(W_n) = r(x,y)$. We can assume that each of the points $x$ and $y$ belongs either to the first or last pumpkin. Due to Proposition \ref{prop: surgery} one can replace each of the $(n-2)$ pumpkins in the middle by a path of length $\frac{L}{n^2}$, without changing the resistance $r(x,y)$ of $x$ and $y$. The new graph $W_n'$ has diameter $\diam(W'_n) =\frac{2L}{n} + \frac{(n-2)L}{n^2}$, so
	\[
	\lambda_{1}^{\min}(W_n) \geq \lambda_{1}^{\min}(W_n') \geq  \frac{4}{\frac{2L}{n} + \frac{(n-2)L}{n^2}} \to \infty, \qquad \text{ for } n \to \infty,
	\]
	by Theorem~\ref{Lem: Resitance metric and optimal eigenvalue} and Corollary~\ref{Cor: First bounds for lambda1}.
\end{proof}

The lower bound in~\eqref{eq:Lambda1PathMetricDiameter} is sharp, as equality holds for the path $P_L =[0,L]$. It turns out that $P_L$ also minimizes $\lambda_1^{\min}$ on some classes of metric graphs.

\begin{cor}
	The path graph $P_L = [0,L]$ minimizes the first optimal eigenvalue in both the classes $\mathscr{G}_L$ and $\mathscr{G}'_L$, that is,
	\[
	\min_{G \in \mathscr{G}_L} \lambda_1^{\min} (G) = \min_{G \in \mathscr{G}'_L} \lambda_1^{\min} (G) = \lambda_1^{\min} (P_L) = \frac{4}{L}.
	\]
	Moreover, $P_L$ is the unique minimizer in $\mathscr{G}_L$, whereas other minimizers exist in $\mathscr{G}'_L$.
\end{cor}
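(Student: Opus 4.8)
The plan is to reduce the entire statement to the resistance-metric identity $\lambda_1^{\min}(G)=4/\diam_r(G)$ of Theorem~\ref{Lem: Resitance metric and optimal eigenvalue}, turning the optimization into a comparison of resistance diameters. First I would record the base value: since $P_L=[0,L]$ is a tree, the resistance and path metrics coincide, so $\diam_r(P_L)=\diam_\varrho(P_L)=L$ and hence $\lambda_1^{\min}(P_L)=4/L$. For the lower bound in the class $\mathscr{G}'_L$ one has $\diam_\varrho(G)=L$ by definition, and the inequality $r\le\varrho$ from~\eqref{eq:InequalityResistancePathMetric} gives $\diam_r(G)\le L$, whence $\lambda_1^{\min}(G)\ge 4/L$; this is precisely the lower estimate of Corollary~\ref{Cor: First bounds for lambda1}. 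In the class $\mathscr{G}_L$ the key observation is that a shortest path is always simple and therefore has length at most the total length, so $\diam_\varrho(G)\le L(G)=L$; combining this with $r\le\varrho$ again yields $\diam_r(G)\le L$ and the same bound $\lambda_1^{\min}(G)\ge 4/L$. Since $P_L$ lies in both classes and attains $4/L$, both minima equal $\lambda_1^{\min}(P_L)=4/L$.

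For uniqueness in $\mathscr{G}_L$ I would analyze the equality case by tracing the chain of inequalities backwards. If $\lambda_1^{\min}(G)=4/L$, then $\diam_r(G)=L$, so by compactness there exist points $x,y\in G$ with $r(x,y)=L$; the chain $r(x,y)\le\varrho(x,y)\le L(G)=L$ then forces $\varrho(x,y)=L(G)$. A shortest path realizing $\varrho(x,y)$ is simple and has length equal to the entire length of $G$, so it must traverse every edge, and $G$ cannot contain any edge off this path. Hence $G=P_L$, establishing uniqueness.

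For non-uniqueness in $\mathscr{G}'_L$ I would exhibit a second minimizer explicitly: attach to $P_L=[0,L]$ a pendant edge of length $\varepsilon\in(0,L/2]$ at the midpoint. The resulting graph is still a tree, so $r=\varrho$ and therefore $\diam_r=\diam_\varrho$; the path diameter remains $L$, realized by the two original endpoints, so the graph lies in $\mathscr{G}'_L$ and satisfies $\lambda_1^{\min}=4/L$, while differing from $P_L$. The main obstacle here is conceptual rather than computational: it lies in correctly isolating the rigidity in $\mathscr{G}_L$ --- that the simultaneous equalities $r(x,y)=\varrho(x,y)=L(G)$ pin $G$ down to a single path --- and in recognizing that this rigidity genuinely fails under the diameter constraint, which the pendant-edge example makes precise precisely because adding a dead-end edge leaves the resistance diameter unchanged.
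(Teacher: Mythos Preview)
Your proof is correct and follows essentially the same route as the paper: both reduce everything to Theorem~\ref{Lem: Resitance metric and optimal eigenvalue} (equivalently, the lower bound $\lambda_1^{\min}(G)\ge 4/\diam_\varrho(G)$ of Corollary~\ref{Cor: First bounds for lambda1}), observe that $P_L$ is a tree attaining $4/L$, and for uniqueness in $\mathscr{G}_L$ use that $\diam_\varrho(G)<L(G)$ unless $G$ is a path. The only cosmetic differences are that you trace the equality case through $r$ explicitly while the paper invokes $\diam_\varrho(G)<L$ directly, and for non-uniqueness in $\mathscr{G}'_L$ you give one explicit pendant-edge tree whereas the paper notes that \emph{every} metric tree of diameter $L$ works.
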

\begin{proof}
	Since $P_L$ is a tree, $\lambda_1^{\min}(P_L) = 4/\diam(P_L) = 4/L$ by Theorem~\ref{Lem: Resitance metric and optimal eigenvalue}. Since $\diam(G) < L$ for every other graph $G \in \mathscr{G}_L$, the path graph $P_L$ is the unique minimizer  in $\mathscr{G}_L$. On the other hand, $\lambda_1^{\min}(T) = 4/L$ holds for every metric tree of diameter $L$, so all of them minimize $\lambda_1^{\min}$ in $\mathscr{G}_L'$.
\end{proof}

\subsection{An upper estimate} Theorem~\ref{thm:SpectralPartitioning} indicates that the first optimal eigenvalue is related to graph partitioning. In the following, we elaborate on this connection by proving an upper estimate for $\lambda_1^{\min}(G)$ in terms of a Cheeger-type constant.

Let $G= (V,E,\ell)$ be a metric graph. For a set of vertices $Y \subset V$, we define its \emph{combinatorial boundary} as
\[
\delta Y = \{e \in E \; | \; e \text{ connects a vertex in } Y \text{ and in }V\setminus Y\}
\] 
and the \emph{volume of the combinatorial boundary} as 
\[
| \delta Y | := \sum_{e \in \delta Y} \frac{1}{\ell(e)}. 
\]
This leads to the following upper bound for the first optimal eigenvalue.
\begin{lem} \label{lem:BasicCheeger}
	Consider a subset $Y \subset V$ with $\emptyset\subsetneq  Y \subsetneq V$. Then 
	\[
	\lambda_{1}^{\min}(G) \leq 4|\delta Y|. 
	\]
\end{lem}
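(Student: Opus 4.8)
The plan is to produce an explicit admissible test function and invoke the variational characterization from Corollary~\ref{Cor: Alternative charakterization of lambda 1}, namely
\[
\lambda_1^{\min}(G) = \min_{\substack{f \in H^1(G) \\ \max f = 1, \, \min f = -1}} q(f).
\]
Thus it suffices to exhibit one function $f \in H^1(G)$ with $\max f = 1$ and $\min f = -1$ whose energy is at most $4|\delta Y|$.

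First I would take $f$ to be the edgewise linear function determined by the vertex values
\[
f(v) = \begin{cases} 1, & v \in Y, \\ -1, & v \in V \setminus Y. \end{cases}
\]
Since both $Y$ and $V \setminus Y$ are non-empty, the function $f$ is continuous, lies in $H^1(G)$, and satisfies $\max f = 1$ and $\min f = -1$, so it is an admissible competitor in the above minimization.

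It then remains only to compute $q(f)$. Using that the energy of an edgewise linear function equals $q(f) = \sum_{e = vw} (f(v) - f(w))^2 / \ell(e)$, I observe that an edge $e = vw$ contributes nothing unless $v$ and $w$ lie on opposite sides of the splitting $V = Y \cup (V \setminus Y)$, that is, unless $e \in \delta Y$; in that case $f(v) - f(w) = \pm 2$ and the contribution is $4/\ell(e)$. Summing over the boundary edges yields
\[
q(f) = \sum_{e \in \delta Y} \frac{4}{\ell(e)} = 4 |\delta Y|,
\]
and hence $\lambda_1^{\min}(G) \le q(f) = 4|\delta Y|$. The argument is entirely a matter of choosing the right test function; the only point to notice is that, since $f$ is constant ($\equiv \pm 1$) on each side of the partition, only the boundary edges in $\delta Y$ carry energy, so there is no genuine obstacle to overcome here.
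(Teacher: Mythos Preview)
Your proof is correct and follows essentially the same approach as the paper: both construct the edgewise linear function taking the value $1$ on $Y$ and $-1$ on $V\setminus Y$, compute its energy as $4|\delta Y|$, and invoke Corollary~\ref{Cor: Alternative charakterization of lambda 1}. Your write-up simply spells out the edgewise energy computation in slightly more detail.
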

\begin{proof}
	Consider the edgewise linear function $f \in H^1(G)$ which satisfies  $f \equiv 1$ on $Y$ and $f \equiv -1$ on $V \setminus W$. Then $q(f) = 4|\delta Y|$. Applying Corollary~\ref{Cor: Alternative charakterization of lambda 1}, we arrive at the claimed estimate.
\end{proof}

We introduce a \emph{Cheeger-type constant} of a metric graph $G$ as
\[
h(G) := \min \Big \{ \min_{\emptyset \subsetneq  Y \subsetneq V} | \delta Y|, \, \min_{e_1\neq e_2 \in E} \frac{1}{\ell (e_1)} + \frac{1}{\ell(e_2)}, \, \min_{e \in E} \frac{4}{\ell(e)} \Big \}.
\] 
Note that this expression depends on the choice of the model of $G$. To be more precise, we define $h(G)$ as the above expression for one (equivalently, any) essential model of $G$ (see Section~\ref{ss:MetricGraphsBasics}). In considerations involving $h(G)$, we will always work with an essential model of $G$. Since removing vertices of degree $2$ from a model decreases the above expression, this corresponds to taking the minimal constant among all models.

\begin{rmk}
	Comparing $h(G)$ to established notions of Cheeger constants for graphs, its shape might be a bit surprising. Indeed, the conceptual idea behind Cheeger constants is to partition a graph into two subsets of the same volume and with smallest possible boundary. However, our definition of $h(G)$ seemingly does not take into account the volume of subsets. This is due to the shape of the minimizing measure $\mu = \mu_{x,y} = (\delta_x + \delta_y)/2$, which gives the same volume $1/2$ to any subset containing only one of the points $x$ and $y$.
\end{rmk}

It turns out that $h(G)$ has the following simpler representation.
\begin{lem}
	Let $G$ be a metric graph. Then 
	\[
	h(G) = \min \Big \{\min\limits_{\substack{e\in E \\ e \text{ bridge}}} \frac{1}{\ell(e)},\min_{e_1\neq e_2 \in E} \frac{1}{\ell (e_1)} + \frac{1}{\ell(e_2)}, \min_{e \in E} \frac{4}{\ell(e)} \Big \},
	\]
	where a bridge of the metric graph $G$ is an edge $e$ such that $G \setminus e$ is disconnected.
\end{lem}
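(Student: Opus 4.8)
The plan is to compare the two expressions for $h(G)$ term by term. Both contain the identical terms $\min_{e_1 \neq e_2}\big(\frac{1}{\ell(e_1)} + \frac{1}{\ell(e_2)}\big)$ and $\min_{e \in E} \frac{4}{\ell(e)}$; I abbreviate these by $A$ and $B$. The only difference lies in the first term, where $\min_{\emptyset \subsetneq Y \subsetneq V}|\delta Y|$ is replaced by $\min_{e \text{ bridge}} \frac{1}{\ell(e)}$. Hence it suffices to show that these two quantities agree once combined with $A$, that is, $\min\{\min_{Y} |\delta Y|,\, A\} = \min\{\min_{e \text{ bridge}} \frac{1}{\ell(e)},\, A\}$, after which the full identity follows by taking the further minimum with $B$.

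First I would split the minimum over $Y$ according to the number of edges in the combinatorial boundary $\delta Y$. Since $G$ is connected, $\delta Y$ is nonempty for every $Y$ with $\emptyset \subsetneq Y \subsetneq V$. If $\delta Y$ consists of a single edge $e$, then $e$ is the only edge joining $Y$ and $V\setminus Y$, so removing $e$ disconnects these two nonempty sets; thus $e$ is a bridge and $|\delta Y| = \frac{1}{\ell(e)}$. Conversely, each bridge $e$ produces such a $Y$, namely the vertex set of one of the two components of $G \setminus e$, with $\delta Y = \{e\}$. This establishes the correspondence $\min_{Y:\,|\delta Y| = 1} |\delta Y| = \min_{e \text{ bridge}} \frac{1}{\ell(e)}$.

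Next I would treat the remaining case: if $\delta Y$ contains at least two distinct edges $e_1 \neq e_2$, then $|\delta Y| = \sum_{e \in \delta Y}\frac{1}{\ell(e)} \geq \frac{1}{\ell(e_1)} + \frac{1}{\ell(e_2)} \geq A$. Combining the two cases yields $\min_{Y} |\delta Y| = \min\{\min_{e \text{ bridge}} \frac{1}{\ell(e)},\, \min_{Y:\,|\delta Y| \geq 2}|\delta Y|\}$, where the second inner minimum is bounded below by $A$. Taking the minimum with $A$ absorbs this second term, giving $\min\{\min_{Y} |\delta Y|,\, A\} = \min\{\min_{e \text{ bridge}}\frac{1}{\ell(e)},\, A\}$, and the claimed formula for $h(G)$ follows by further taking the minimum with $B$.

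The argument is essentially combinatorial and short; the only points requiring care are the degenerate cases, handled by the convention that an empty minimum equals $+\infty$. These occur when $G$ has a single vertex (so no admissible $Y$ exists), or no bridge, or fewer than two edges, as for the single-loop essential graph, where both sides collapse to $B = \frac{4}{\ell(e)}$. I expect the main (minor) obstacle to be verifying the bridge correspondence cleanly, in particular observing that a loop edge can never lie in any $\delta Y$ and is never a bridge, so that single-edge boundaries correspond \emph{exactly} to bridges.
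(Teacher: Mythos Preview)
Your proof is correct and follows essentially the same approach as the paper: both arguments observe that if $|\delta Y|$ is strictly smaller than $A = \min_{e_1\neq e_2}\big(\tfrac{1}{\ell(e_1)}+\tfrac{1}{\ell(e_2)}\big)$, then $\delta Y$ must consist of a single edge, which is necessarily a bridge. Your version is in fact more complete, since you also spell out the converse (every bridge yields an admissible $Y$) and handle the degenerate cases (no admissible $Y$, no bridge, loops), which the paper leaves implicit.
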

\begin{proof}
	Assume that $Y\subset V$ satisfies $h(G) = |\delta Y|$ and $|\delta Y| < \min_{e \neq f \in E} \frac{1}{\ell (e)} + \frac{1}{\ell(f)}$. Then $\delta Y$ only consists of exactly one edge $e$. If we remove $e$ from $G$, then there is no connection between vertices in $Y$ and $V\setminus Y$, thus $e$ is a bridge.
\end{proof}
As the main result of this section, we estimate $\lambda_1^{\min}(G)$ in terms of $h(G)$.

\begin{prop}\label{Prop: Cheeger}
	Let $G$ be a metric graph. Then 
	\[
	\lambda_{1}^{\min}(G) \leq 4 h(G).
	\]
\end{prop}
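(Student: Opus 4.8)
The plan is to use the variational characterization from Corollary~\ref{Cor: Alternative charakterization of lambda 1}, namely $\lambda_1^{\min}(G) = \min\{q(f) : f \in H^1(G),\ \max f = 1,\ \min f = -1\}$, and to construct, for each of the three terms appearing in the definition of $h(G)$, an admissible test function whose energy is exactly $4$ times that term. Since $\lambda_1^{\min}(G)$ is a single number and $h(G)$ is a pointwise minimum, it suffices to prove the three separate inequalities $\lambda_1^{\min}(G) \le 4\min_{Y}|\delta Y|$, $\lambda_1^{\min}(G) \le 4\min_{e_1\neq e_2}(\tfrac{1}{\ell(e_1)}+\tfrac{1}{\ell(e_2)})$ and $\lambda_1^{\min}(G) \le 4\min_{e} \tfrac{4}{\ell(e)}$; taking the minimum of the right-hand sides then yields $\lambda_1^{\min}(G)\le 4h(G)$. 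Throughout I would work with an essential model, consistent with the definition of $h(G)$, and repeatedly use that an edgewise linear function $h$ has energy $q(h) = \sum_{e=vw}(h(v)-h(w))^2/\ell(e)$.

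The first inequality is immediate from Lemma~\ref{lem:BasicCheeger}, which already gives $\lambda_1^{\min}(G)\le 4|\delta Y|$ for every $\emptyset\subsetneq Y\subsetneq V$; minimizing over $Y$ gives the claim. For the second inequality, I would fix distinct edges $e_1,e_2$ and build a function $f$ supported on $e_1\cup e_2$ that vanishes at every vertex: on $e_1$ let $f$ be a tent rising from $0$ to $+1$ and back to $0$, and on $e_2$ a valley descending from $0$ to $-1$ and back to $0$, placing the peak, respectively trough, at the midpoint. This $f$ is continuous, lies in $H^1(G)$, and satisfies $\max f=1$, $\min f=-1$; a Cauchy--Schwarz (or direct) optimization shows the tent has energy $\tfrac{4}{\ell(e_1)}$ and the valley $\tfrac{4}{\ell(e_2)}$, so $q(f)=4(\tfrac{1}{\ell(e_1)}+\tfrac{1}{\ell(e_2)})$. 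Minimizing over $e_1\neq e_2$ yields the second bound.

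For the third inequality, fix a single edge $e\simeq[0,\ell(e)]$, set $f\equiv 0$ off $e$ and at the two endpoints of $e$, and on $e$ let $f$ run $0\to +1\to -1\to 0$ through three linear pieces of lengths $a,b,c$ with $a+b+c=\ell(e)$. Then $\max f=1$, $\min f=-1$, and $q(f)=\tfrac{1}{a}+\tfrac{4}{b}+\tfrac{1}{c}=\tfrac{1^2}{a}+\tfrac{2^2}{b}+\tfrac{1^2}{c}$, which by Cauchy--Schwarz is minimized at the split $a:b:c=1:2:1$ with value $\tfrac{(1+2+1)^2}{\ell(e)}=\tfrac{16}{\ell(e)}=4\cdot\tfrac{4}{\ell(e)}$. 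Minimizing over $e$ gives the third bound, and combining the three inequalities completes the proof. Degenerate cases (a single-vertex loop, or a graph with fewer than two edges) simply force some of the three terms to range over an empty index set and hence equal $+\infty$, imposing no constraint, so only the applicable constructions are needed.

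As for difficulty, none of the steps is deep. The only genuinely quantitative point is the third construction, where the three-piece valley and the optimal $1:2:1$ split are needed to produce the constant $16/\ell(e)=4\cdot 4/\ell(e)$ matching the third term of $h(G)$; the first two terms are handled by the (essentially known) $\pm1$-valued and single-bump test functions. The main thing to verify carefully is that the three local test functions are genuinely admissible, i.e.\ continuous, in $H^1(G)$, and with maximum exactly $1$ and minimum exactly $-1$, and that the reduction to three independent upper bounds is legitimate precisely because $h(G)$ is defined as a minimum.
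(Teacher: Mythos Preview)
Your proof is correct and follows essentially the same approach as the paper: both use Corollary~\ref{Cor: Alternative charakterization of lambda 1} together with Lemma~\ref{lem:BasicCheeger} for the first term and the tent/valley test functions $\varphi_{e_1}-\varphi_{e_2}$ for the second. The only minor difference is in the third term: the paper subdivides $e$ at its midpoint and reapplies the two-edge argument to the resulting halves (giving $8/\ell(e)+8/\ell(e)=16/\ell(e)$), whereas you build a direct three-piece function $0\to1\to-1\to0$ and optimize the split via Cauchy--Schwarz to reach the same value; both are equally valid and yield the identical constant.
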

\begin{proof}
	Given an edge $e \in E$, denote by $\varphi_e$ the unique $H^1$-function with $\varphi_e \equiv 0$ on $G \setminus e$, with $\varphi_e(x) = 1$ for the midpoint $x$ of $e$, and which is linear on both components of $e \setminus \{x\}$. A direct calculation shows that $q(\varphi_e) = 4/\ell(e)$.
	Given two edges $e \neq f$, it follows from Corollary~\ref{Cor: Alternative charakterization of lambda 1} that
	\[
	\lambda_{1}^{\min}(G) \leq q ( \varphi_e - \varphi_f ) = q ( \varphi_e) + q(\varphi_f )  = \frac{4}{\ell(e)} + \frac{4}{\ell(f)}.
	\]
	Breaking an edge $e \in E$ into two edges at the midpoint of $e$ and applying the above argument, we infer that $\lambda_{1}^{\min}(G) \le 16/\ell(e)$ for all edges $e$. Comparing with Lemma~\ref{lem:BasicCheeger}, we conclude.
\end{proof}
The estimate in Proposition~\ref{Prop: Cheeger} is sharp, as equality holds for the path graph $P = [0,1]$. Additional examples for  the sharpness are provided in the next section.

\subsection{Examples}
In this section, we calculate the first optimal eigenvalue $\lambda_1^{\min}$ for several graphs. For metric trees, $\lambda_1^{\min}$ was described in  Theorem~\ref{Lem: Resitance metric and optimal eigenvalue}. Motivated by this, in the following we focus on graphs containing many cycles.
Our key tool is the equality $\lambda_1^{\min} (G) = \min_{x \neq y \in G} q(f_{x,y})$ from~\eqref{eq:ExpressLamda1Fxy}.

\begin{ex}[Complete graph] \label{ex:CompleteGraph}
	Let $K_n$, $n \ge4 $, be the equilateral complete graph with $n$ vertices and constant edge lengths $\ell(e) := \ell := \frac{2}{n(n-1)}$. In particular, $K_n$ has total length $L(K_n) =1$. By~\eqref{eq:ExpressLamda1Fxy}, we can calculate $\lambda_1^{\min}(K_n)$ by minimizing the energy of the function $f_{x,y}$ over all $x,y \in G$. By Theorem~\ref{thm: points avoid vertices}, it suffices to consider points $x$ and $y$ which lie in the interior of edges.
	
	Assume that $x \in e_1$ and $y \in e_2$ for two disjoint edges $e_1=uv$ and $e_2=wz$ (i.e., $\{u,v\}\cap \{w,z\}  =\varnothing$). Identify $x$ with the point $0 < a < \ell$ in the interval $e_1 = [0, \ell]$ and $y$ with the point $0 < b < \ell$ in the interval $e_1 = [0, \ell]$. By symmetry, the function $f= f_{x,y}$ takes the same value on all vertices $\hat v \in  V\setminus  \{u,v,w,z\}$. Fixing a vertex $\hat v \in V\setminus  \{u,v,w,z\}$, the harmonicity conditions in the vertices $u,v,w,z$ and $\hat v$ can be written as 
	\[
	\begin{cases}
		\ell f(x) + a \big (f(w) +   f(z)  + (n-4)f(\hat v) \big ) = (\ell + (n-2)a )f(u)\\[2mm]	
		\ell f(x) + (\ell-a)\big ( f(w) + f(z) + (n-4) f(\hat v) \big )= (\ell + (n-2)(\ell -a) )f(v) \\[2mm]
		\ell f(y) + b \big (f(u) +   f(v)  + (n-4)f(\hat v) \big ) = (\ell + (n-2)b )f(w)\\[2mm]
		
		\ell f(y) + (\ell-b)\big ( f(u) + f(v) + (n-4) f(\hat v) \big )= (\ell + (n-2)(\ell -b) )f(z) \\[2mm]
		
		f(u) + f(v) + f(w) + f(z) = 4 f(\hat v).
	\end{cases}
	\]
	These equations allow to determine the function $f$. Minimizing the corresponding energy and applying~\eqref{eq:ExpressLamda1Fxy}, we calculate
	\[
	\min_{x \in e_1, \, y \in e_2} q(f_{x,y}) = \frac{4n^2\left(n-1\right)}{n+2},
	\]
	where the minimum is achieved for $x$ and $y$ being the edge midpoints. In a similar way, one can show that, if one places points $x$ and $y$ on the same edge or on two adjacent edges, then the corresponding energy is bigger. Thus,
	\[
	\lambda_{1}^{\min}(K_n)   = \frac{4n^2\left(n-1\right)}{n+2}
	\]
	and the minimizing measures are the measures $\mu_{x,y}$ where $x$ and $y$ are the midpoints of disjoint edges. Note that in this case also $\varrho(x,y) = \frac{4}{(n-1)n} = \diam_\varrho K_n$.
	
	The Cheeger-type constant of $K_n$ is given by
	\[
	h(K_n) = n(n-1).
	\]
	The estimate in Proposition~\ref{Prop: Cheeger} is asymptotically sharp for $K_n$ as $n \to \infty$.
\end{ex}
\begin{ex}[Pumpkin graph] \label{ex: Pumpkin}
Let $G_n$, $n \ge  2$, be the equilateral $n$-pumpkin, that is, the metric graph consisting of two vertices $x_0$ and $x_1$, joined by $n$ edges of the same length $\ell := 1/n$. Proceeding as in Example~\ref{ex:CompleteGraph}, consider two points $x,y \in G  \setminus V$. We distinguish two cases:
	
	Assume first that $x$ and $y$ belong to the same edge $e$. Set $a := \varrho(x_0,x)$ and $b := \varrho(x_1, y)$, so that $\varrho(x,y) = 1/n -a-b$. 
	As is readily verified, the harmonic function $f=f_{x,y}$ takes the values
	\[
	f(x_0) = \frac{n(n-1)(b-a) + 1 }{n(n-1)(a+b) + 1}, \qquad f(x_1) = \frac{n(n-1)(b-a) - 1}{n(n-1)(a+b) + 1}, 
	\]
	in the vertices $x_0$ and $x_1$. Its energy is given by
	\[
	q(f) = \frac{4n^2}{ \big (n(n-1)(a+b) + 1 \big ) \big (1-n (a+b) \big )},
	\]
	which becomes minimal precisely when $a+b=\frac{n-2}{2n(n-1)}$. In this case, it equals 
	\[
	q(f) = 16(n-1). 
	\]
	
	Now assume that $x$ and $y$ belong to different edges $e_x \neq  e_y$, respectively. Set $a:= \varrho(x_0 ,x)$ and $b := \varrho(x_0,y)$. The harmonic function $f = f_{x,y}$ is then given by the function values 
	\[
	f(x_0) = \frac{(a-b)(1-a(n-1)-b(n-1))}{a^2(n-1) + b^2(n-1) + 2ab - a - b}
	\]
	and
	\[
	f(x_1) = \frac{(a-b)(-2-n^2(a+b)+n(a+b+1))}{n(a^2(n-1) + b^2(n-1) + 2ab - a - b)}.
	\]
	Its energy equals
	\[
	q(f) = \frac{4}{a^2(1-n) +a +b^2(1-n) +b -2ab },
	\]
	which becomes minimal for $a=b=1/(2n)$. In this case, we obtain $q(f) = 8n$.

	We conclude that
	\[
	\lambda_1^{\min}(G_n) = 8n
	\]
	and that the minimizing measures are the measure $\mu_{x,y}$, where $x$ and $y$ are midpoints of two distinct edges. Note that in this case $\diam_\varrho(G) = \varrho(x,y)$. Moreover, the estimate from Proposition~\ref{Prop: Cheeger} in terms of $h(G_n) = 2n$ is sharp.
\end{ex}

\begin{ex}[Butterfly graph, a.k.a. ${[}n,  n{]}$-pumpkin chain]
For $n \ge 2$, consider the butterfly graph $B_n$,  that is the graph consisting  of three vertices $x_0, x_1$ and $c$, and having $n$ edges each from $c$ to $x_0$ and from $c$ to $x_1$, all of the same length $\ell = \frac{1}{2n}$. Using similar arguments as in the previous two examples, one can calculate that
	\[
	\lambda_1^{\min} (B_n) =16(n-1).
	\]
	The minimizing measures are precisely the measures $\mu_{x,y}$ where $x$ lies on an edge at $x_0$, and $y$ on an edge at $x_1$, with positions $\varrho(x_0,x) = \varrho(x_1, y) =   \frac{n-2}{4n(n-1)}$. Note that $\varrho(x,y) = \frac{1}{2(n-1)} < \diam(B_n) = \frac{1}{n}$ for $n \ge 3$.
	
	The optimal Cheeger constant is given by $h(B_n) = 4n$, so that Proposition~\ref{Prop: Cheeger} is asymptotically sharp for $n\to\infty$. 
\end{ex}


\section{Weyl's law for optimal eigenvalues} \label{sec:Weyl'sLaw}
In this section, we calculate the optimal eigenvalues of the path graph $P =[0,1]$ and derive a Weyl's law for optimal eigenvalues on general metric graphs.

\subsection{Optimal eigenvalues of the path graph}
In the following, we view the interval $[0,1]$ as a metric graph $P$ consisting of two vertices joined by an edge of length one, and call $P$ the path graph (of length one).

\begin{thm} \label{thm:ConfEVPath}
	The optimal eigenvalues of the path graph $P =[0,1]$ are
	\[
	\lambda_k^{\min} ( P ) = 4 k^2, \qquad k \in \N_{0}.
	\]
	Moreover, the unique measure $\mu_k \in \cN$ with $\lambda_k^{\min} ( P ) = \lambda_k \big( H_{\mu_k}\big )$ is
	\begin{equation} \label{eq:MinimizingMeasurePath}
		\mu_k =\frac{1}{2k} \delta_0 + \frac{1}{2k} \delta_1 + \sum_{j=1}^{k-1} \frac{1}{k} \delta_{\frac{j}{k}} \in \cN.
	\end{equation}
\end{thm}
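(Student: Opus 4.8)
The plan is to prove the two matching bounds $\lambda_k^{\min}(P)\le 4k^2$ and $\lambda_k^{\min}(P)\ge 4k^2$ separately, the first by exhibiting $\mu_k$ as a minimizer and the second by a nodal-domain decomposition; uniqueness will then be extracted from the equality cases. I treat $k\ge 1$, since $k=0$ is trivial (and carries no uniqueness statement). For the upper bound I would first make $H_{\mu_k}$ explicit. Choosing the model of $P$ with vertices $v_j=j/k$, $j=0,\dots,k$, and edges of length $1/k$ (so $1/\ell(e)=k$), Example~\ref{ex:DiscreteLaplacians} identifies $H_{\mu_k}$ with the weighted discrete Laplacian acting by $(H_{\mu_k}f)(v_j)=k^2(2f_j-f_{j-1}-f_{j+1})$ at interior vertices and by $(H_{\mu_k}f)(v_0)=2k^2(f_0-f_1)$, $(H_{\mu_k}f)(v_k)=2k^2(f_k-f_{k-1})$ at the endpoints, the factor $2$ reflecting the halved masses $\tfrac{1}{2k}$ there. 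A direct check shows that the vectors $(\cos(n\pi j/k))_{j=0}^{k}$, $n=0,\dots,k$, are eigenvectors with eigenvalues $4k^2\sin^2(n\pi/2k)$, which are strictly increasing in $n$. Since $\dim L^2(P,\mu_k)=k+1$, these are all eigenvalues and $\lambda_k(H_{\mu_k})=4k^2\sin^2(\pi/2)=4k^2$, giving $\lambda_k^{\min}(P)\le 4k^2$.

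For the lower bound, fix any $\mu\in\cN$ with $\lambda:=\lambda_k(H_\mu)<\infty$ (otherwise there is nothing to prove) and let $f$ be an eigenfunction for $\lambda$. The Sturm oscillation theorem on $P$ (Theorem~\ref{Thm: Sturm oscillation}) provides exactly $k$ sign changes of $f$, hence exactly $k+1$ nodal domains $I_0,\dots,I_k\subset[0,1]$; the two outer ones $I_0,I_k$ meet an endpoint of $P$, while $I_1,\dots,I_{k-1}$ are interior. Writing $\ell_j:=|I_j|$ and $m_j:=\mu(I_j)$, we have $\sum_j\ell_j=\sum_j m_j=1$. On each $I_j$ the sign-definite restriction $f|_{I_j}$ is a ground-state eigenfunction of the Dirichlet (resp.\ mixed) Laplacian $H^D_{\mu|_{I_j}}$ obtained by imposing a Dirichlet condition at the interior zeros bounding $I_j$, so $\lambda=\lambda_0(H^D_{\mu|_{I_j}})$. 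Minimizing over all mass-$m_j$ measures on $I_j$ and invoking Lemma~\ref{lem:MinimizingMeasuresDirichletEigenvalue} (the optimal measure is a single Dirac), an elementary computation of the tent-shaped ground states gives $\lambda_0(H^D_{\mu|_{I_j}})\ge c_j/(m_j\ell_j)$, with $c_j=4$ for the $k-1$ interior domains (optimal Dirac at the midpoint) and $c_j=1$ for the two outer domains (optimal Dirac at the free endpoint).

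Rearranging $\lambda\ge c_j/(m_j\ell_j)$ as $\sqrt{m_j\ell_j}\ge\sqrt{c_j}/\sqrt{\lambda}$, summing over $j$, and using $\sum_j\sqrt{c_j}=2\cdot 1+(k-1)\cdot 2=2k$, the Cauchy--Schwarz inequality yields
\[
\frac{2k}{\sqrt{\lambda}}=\frac{1}{\sqrt{\lambda}}\sum_{j=0}^{k}\sqrt{c_j}\le\sum_{j=0}^{k}\sqrt{m_j\ell_j}\le\Big(\sum_{j=0}^{k}m_j\Big)^{1/2}\Big(\sum_{j=0}^{k}\ell_j\Big)^{1/2}=1.
\]
Hence $\lambda\ge 4k^2$, and together with the upper bound this gives $\lambda_k^{\min}(P)=4k^2$ with $\mu_k$ a minimizer. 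For uniqueness, suppose $\lambda=\lambda_k(H_\mu)=4k^2$; then every inequality above is an equality. Equality in Cauchy--Schwarz forces $m_j=\ell_j$ for all $j$, while equality in the per-piece bounds forces each $\mu|_{I_j}$ to be the mass-$m_j$ minimizer, so by the uniqueness in Lemma~\ref{lem:MinimizingMeasuresDirichletEigenvalue} it is a single Dirac at the midpoint of an interior $I_j$, resp.\ at the endpoint $0$ or $1$ for the outer pieces. Substituting $m_j=\ell_j$ and $\lambda=4k^2$ into $\lambda=c_j/\ell_j^2$ pins $\ell_j=1/k$ (interior) and $\ell_j=1/(2k)$ (outer), hence the atoms to $\tfrac1k\delta_{j/k}$ and $\tfrac1{2k}\delta_0,\tfrac1{2k}\delta_1$, recovering exactly $\mu_k$. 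I expect the main obstacle to be the \emph{exact} nodal count: the entire lower bound rests on $f$ having precisely $k+1$ nodal domains (Courant only gives at most $k+1$), which is why the Sturm oscillation theorem for general measures on $P$ is indispensable; a second delicate point is the correct bookkeeping of the two distinct constants $c_j\in\{1,4\}$, since using $4$ uniformly would destroy the sharp value $4k^2$.
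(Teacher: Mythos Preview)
Your proof is correct and follows the same overall architecture as the paper: the upper bound via explicit diagonalization of $H_{\mu_k}$, and the lower bound via the Sturm oscillation theorem (Theorem~\ref{Thm: Sturm oscillation}) followed by a nodal-domain decomposition and the per-piece Dirichlet bounds of Lemma~\ref{lem:SpectralGapEstimatesDirichletLaplacians}. The one substantive difference lies in how you aggregate the per-piece inequalities $\lambda\ge c_j/(m_j\ell_j)$: the paper sets $s_j=m_j+\ell_j$ and runs a pigeonhole argument (since $\sum s_j\le 2$, some $s_j$ is small enough to force $\lambda\ge 4k^2$), whereas you apply Cauchy--Schwarz to $\sum_j\sqrt{m_j\ell_j}$. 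Your route is slightly slicker and makes the equality analysis for uniqueness more transparent, since the rigidity of Cauchy--Schwarz immediately yields $m_j=\ell_j$.

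One small correction: you assert $\sum_j m_j=1$, but this can fail if $\mu$ has atoms at the zeros $x_1,\dots,x_k$; only $\sum_j m_j\le 1$ is guaranteed. This does not affect the lower bound (your chain of inequalities only needs $\le 1$), and in the uniqueness step equality in $\big(\sum m_j\big)^{1/2}\big(\sum \ell_j\big)^{1/2}\le 1$ forces $\sum m_j=1$ anyway, so no mass is lost at the zeros.
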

In the proof of Theorem~\ref{thm:ConfEVPath}, we need the following lemma.

\begin{lem} \label{lem:SpectralGapEstimatesDirichletLaplacians}
	Consider the path graph $P_L = [0,L]$ of length $L>0$. For the Dirichlet sets $A =\{L\}$ and $A' =\{0, L\}$, the smallest optimal eigenvalues are given by
	\[
	\lambda_0^{\min, D}\big (P_L, A \big ) = \frac{1}{L}, \qquad \lambda_0^{\min, D}\big (P_L, A'\big ) = \frac{4}{L}.
	\]
	The unique minimizing measures are given by $\mu = \delta_0$ and $\mu' = \delta_{L/2}$, respectively.
\end{lem}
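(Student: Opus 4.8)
The plan is to reduce the problem to minimizing over Dirac measures and then to carry out an explicit one-variable optimization. First I would invoke Lemma~\ref{lem:MinimizingMeasuresDirichletEigenvalue}, which guarantees that every minimizing measure for $\lambda_0^{\min,D}(P_L, A)$ (and likewise for $A'$) is a Dirac measure $\delta_x$ with $x \in P_L \setminus A$. Thus it suffices to compute the function $x \mapsto \lambda_0(H_{\delta_x}^D)$ and to locate its minima. For $\mu = \delta_x$, the variational characterization~\eqref{eq:VariationalCharacterizationDirichlet} specializes to
\[
\lambda_0(H_{\delta_x}^D) = \inf_{\substack{f \in H^1_0(P_L, A)\\ f(x) \neq 0}} \frac{q(f)}{|f(x)|^2} = \min\{q(f) \mid f \in H^1_0(P_L, A),\ f(x) = 1\},
\]
and by Proposition~\ref{Prop: Harmonic extension} the minimizer is the function taking the prescribed values on $\{x\} \cup A$ and harmonic elsewhere, which by Lemma~\ref{Prop: characterization harmonicity} is piecewise linear.

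Next I would compute this minimal energy explicitly in each of the two cases. For $A = \{L\}$ and $x \in [0,L)$, harmonicity at the free endpoint $0$ forces $f'(0) = 0$, so the energy-minimizing function is constant equal to $1$ on $[0,x]$ and linear from $1$ to $0$ on $[x,L]$; hence $\lambda_0(H_{\delta_x}^D) = \tfrac{1}{L-x}$. This is strictly increasing in $x$, so its unique minimum over $[0,L)$ is attained at $x = 0$ with value $\tfrac{1}{L}$, realized by $\mu = \delta_0$. For $A' = \{0,L\}$ and $x \in (0,L)$, the minimizer is linear from $0$ to $1$ on $[0,x]$ and from $1$ to $0$ on $[x,L]$, so $\lambda_0(H_{\delta_x}^D) = \tfrac{1}{x} + \tfrac{1}{L-x}$, a strictly convex function with unique minimum at $x = L/2$ of value $\tfrac{4}{L}$, realized by $\mu' = \delta_{L/2}$.

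I do not expect a serious obstacle here; the computation is elementary once the reduction is in place. The only points requiring care are justifying that the piecewise-linear harmonic function is genuinely the energy minimizer (this is exactly Proposition~\ref{Prop: Harmonic extension}(ii)) and correctly treating the free endpoint in the case $A = \{L\}$, where harmonicity at the degree-one vertex yields the Neumann condition $f'(0) = 0$ and hence constancy of $f$ on $[0,x]$. Uniqueness of the minimizing measure then follows by combining the reduction to Dirac measures with the strict monotonicity (respectively strict convexity) of $x \mapsto \lambda_0(H_{\delta_x}^D)$.
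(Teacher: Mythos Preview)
Your proposal is correct and follows essentially the same approach as the paper: both reduce to Dirac measures via Lemma~\ref{lem:MinimizingMeasuresDirichletEigenvalue} and then compute $\lambda_0(H_{\delta_x}^D) = 1/(L-x)$ for $A=\{L\}$ and $\lambda_0(H_{\delta_x}^D) = 1/x + 1/(L-x)$ for $A'=\{0,L\}$, locating the unique minima at $x=0$ and $x=L/2$ respectively. You supply more detail on how these formulas arise (via Proposition~\ref{Prop: Harmonic extension} and the Neumann condition at the free endpoint), but the argument is the same.
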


\begin{proof}
	To prove the first claim, note that every minimizing measure for $\lambda_0^{\min, D}$ is a Dirac measure due to Lemma \ref{lem:MinimizingMeasuresDirichletEigenvalue}. Since $\lambda_0(H_{\delta_x}^D) = 1/(L-x)$ for every $x \in [0,L)$, the claim follows. Similarly, to prove the second claim, we need to minimize $\lambda_{0}(H^D_{\delta_x})$ over all measures $\delta_x$ for $x \in (0,L)$. A simple calculation shows that $\lambda_{0}(H^D_{\delta_x}) = \frac{1}{x} + \frac{1}{L-x}$, which becomes minimal precisely for $x = \frac{L}{2}$.
\end{proof}

\begin{proof}[Proof of Theorem~\ref{thm:ConfEVPath}]
	The statement is trivial for $k=0$. For $k \in \N$, we consider the vertex set $V = \{\frac{j}{k} | \, j=0, \dots, k\}$ and join the vertices $\frac{j}{k}$ and $\frac{j+1}{k}$ by an edge of length $\frac{1}{k}$ for all $j$, which gives a model of $P$. By Example~\ref{ex:DiscreteLaplacians}, the operator $H_{\mu_k}$ for the measure $\mu_k$ from~\eqref{eq:MinimizingMeasurePath} is a discrete Laplacian. In fact, $H_{\mu_k} = 2k^2 H_{\rm{norm}}$, where $H_{\rm{norm}} \colon \R^V \to \R^V$ is the normalized Laplacian acting on functions $f \in \R^V$ as
	\[
	(H_{\rm{norm}} f)(v) =\frac{1}{\deg(v)} \sum_{w \sim v} (f(v)-f(w)), \qquad v \in V.
	\]
	It is well-known that $\lambda_k(H_{\rm{norm}})= 2$ (e.g., \cite[Example 1.4]{chung}) and we conclude that
	\[
	\lambda_k^{\min}(P) \le \lambda_k(H_{\mu_k}) = 4k^2.
	\]

	It remains to prove that $\lambda_k^{\min}(H_\mu)  \ge 4k^2$. Fix a minimizing measure $\mu \in \cN$. By the Sturm oscillation theorem~\ref{Thm: Sturm oscillation}, the $\mu$-normalized eigenfunction $f$ for  $\lambda_k(H_\mu)$ has precisely $k$ zeros $0<x_1<\dots < x_k <1$. Set $x_0 :=0$, $x_{k+1} :=1$ and consider the intervals $I_i:=(x_i, x_{i+1})$, $i= 1 , \dots, k$. Then 
	\[
	\frac{\int_{I_i} |f'|^2 dx }{\int_{I_{i}} |f|^2d\mu} = \frac{\int_{P} f' \cdot (f|_{I_i})' dx }{\int_{I_{i}} |f|^2d\mu} = \frac{\int_{P} H_{\mu}f \cdot f|_{I_i} d\mu }{\int_{I_{i}} |f|^2d\mu} = \lambda_k(H_\mu) \frac{\int_{I_i} |f|^2 d\mu }{\int_{I_{i}} |f|^2d\mu} = \lambda_k(H_\mu), 
	\] where $f|_{I_i}$ is extended by zero to $P$. Lemma \ref{lem:SpectralGapEstimatesDirichletLaplacians} implies that
	\[
	4k^2 \ge \lambda_k(H_\mu) = \frac{\int_{I_i} |f'|^2 dx }{\int_{I_{i}} |f|^2d\mu} \geq \max_i \left(\frac{1}{\mu(I_0)|I_0|},\frac{1}{\mu(I_k)|I_k|}  ,\frac{4}{\mu(I_i)|I_i|} \right),
	\]
	where $I_i := (x_{i+1}-x_i)$. Define $s_i = \mu(I_i)+|I_i|$, $i=0, \dots, k$. Then
	\[
	s_i^2 \ge 4 \mu(I_i) |I_i|.
	\]
	Moreover, one of the inequalities
	\[
	s_0 > \frac{1}{k}, s_k > \frac{1}{k} \text{ and } s_i > \frac{2}{k} \quad \text{ for all } i=1, \dots, k-1,
	\]
	must fail. Indeed, otherwise we have the contradiction
	\[
	2  \ge \sum_{i=0}^{k} s_k > \frac{1}{k} + \frac{1}{k} + (k-1) \frac{2}{k} = 2.
	\]
	If $s_0 \leq \frac{1}{k}$ or $s_k \leq \frac{1}{k}$, then
	\[
	\mu(I_i)|I_i| \leq \frac{s_i^2}{4} \leq \frac{1}{4 k^2}, \quad \text{ for } i=0 \text{ or } i=k.
	\]
	If $s_i \leq \frac{2}{k}$ for some $i=1, \dots, k-1$, then
	\[
	\mu(I_i)|I_i|	 \leq \frac{s_i^2}{4} \leq \frac{1}{k^2}.
	\]
	In both cases, we obtain that
	\[
	4k^2 \ge \lambda_k(H_\mu) \ge \max_i \left(\frac{1}{\mu(I_0)|I_0|},\frac{1}{\mu(I_k)|I_k|}  ,\frac{4}{\mu(I_i)|I_i|} \right) \geq 4k^2 
	\] and thus $\lambda_k^{\min}(P)=4k^2$. Moreover, by the above estimates, this implies that
	\[
	s_0 = \frac{1}{k},\; s_k = \frac{1}{k} \text{ and } s_i = \frac{2}{k} \quad \text{ for all } i=1, \dots, k-1, 
	\] and 
	\[
	\mu(I_i) = |I_i| = s_i /2, \qquad \text{ for all } i.
	\]
	By Lemma \ref{lem:SpectralGapEstimatesDirichletLaplacians}, the unique minimizing measure on each interval $I_i$ is a Dirac measure. Combining the above, we get that $\mu$ coincides with $\mu_k$ defined in~\eqref{eq:MinimizingMeasurePath}.
\end{proof}

\subsection{Weyl's law}
In the following, we derive a Weyl's law for the optimal eigenvalues of a metric graph. The result is based on the following lemma.

\begin{lem} \label{lem:EigenvalueInterlacingConsequence}
	Let $G=(V,E, \ell)$ be a metric graph with optimal eigenvalues $\lambda_k^{\min}(G)$ for $k \in \N_{0}$. Then
	\begin{equation*} \label{eq:EigenvalueInterlacingConsequence1}
		\lambda_k^{\min}(G) \ge 	\frac{4 (k-(\#E - 1))^2}{L(G)}
	\end{equation*}
	for all $k \ge \#E-1$  and 
	\begin{equation*} \label{eq:EigenvalueInterlacingConsequence2}
		\lambda_k^{\min}(G) \le \frac{4 \big(k + 2\#E - \#V  \big)^2}{L(G)}
	\end{equation*}
	for all $k \ge 0$.
\end{lem}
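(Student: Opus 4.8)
The plan is to compare $G$ with the disjoint union $\widetilde G = \bigsqcup_{e \in E} I_e$ of its edges (each $I_e$ an interval of length $\ell(e)$), that is, to decouple $G$ at every vertex with Neumann conditions, and then to feed in the exact formula $\lambda_j^{\min}(P_\ell) = 4j^2/\ell$ for the path of length $\ell$, which follows from Theorem~\ref{thm:ConfEVPath} and the scaling in Remark~\ref{rmk : Scaling property}. The structural input is the isometric embedding $\iota\colon H^1(G)\hookrightarrow H^1(\widetilde G)$ sending a continuous function to its tuple of edge restrictions. Its image is exactly the subspace of $H^1(\widetilde G)$ whose values agree at all glued endpoints; since agreement at a vertex $v$ imposes $\deg(v)-1$ conditions, this subspace has codimension $\sum_{v\in V}(\deg(v)-1) = 2\#E-\#V$. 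Given $\mu\in\cN$, I would push it to a probability measure $\widetilde\mu$ on $\widetilde G$ by assigning each vertex atom entirely to one incident edge-copy, which makes $\iota$ an isometry for both $q$ and $\|\cdot\|_{L^2(\cdot,\mu)}$. Using that $\lambda_k(H_\mu)$ may be computed by the min-max over all of $H^1(G)$ (valid since the projection $\cP_\mu$ does not increase the Rayleigh quotient), the standard interlacing for a codimension-$c$ subspace yields
\[
\lambda_k\big(H^{\widetilde G}_{\widetilde\mu}\big)\le \lambda_k(H_\mu)\le \lambda_{k+2\#E-\#V}\big(H^{\widetilde G}_{\widetilde\mu}\big), \qquad c:=2\#E-\#V .
\]

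For the lower bound I would fix an arbitrary $\mu\in\cN$ and estimate the counting function of the decoupled operator. Writing $c_e:=\widetilde\mu(I_e)$, the spectrum of $H^{\widetilde G}_{\widetilde\mu}=\bigoplus_e H^{I_e}_{\widetilde\mu|_{I_e}}$ is the union of the edge spectra, whose $j$-th eigenvalue equals $c_e^{-1}\lambda_j(H^{I_e}_{\nu_e})\ge c_e^{-1}\,4j^2/\ell(e)$ for the normalized probability measure $\nu_e=\widetilde\mu|_{I_e}/c_e$, by Theorem~\ref{thm:ConfEVPath}. Summing the per-edge counts and applying Cauchy--Schwarz to $\sum_e\sqrt{c_e\ell(e)}\le (\sum_e c_e)^{1/2}(\sum_e \ell(e))^{1/2}=\sqrt{L(G)}$ gives
\[
\#\{\,j:\lambda_j(H^{\widetilde G}_{\widetilde\mu})<\Lambda\,\}\le \#E+\tfrac12\sqrt{\Lambda\,L(G)} .
\]
Feeding this into the left inequality above and taking the contrapositive (if fewer than $k+1$ eigenvalues lie below $\Lambda$, then $\lambda_k\ge\Lambda$) produces the claimed lower bound whenever $k\ge\#E-1$, uniformly in $\mu$, hence for $\lambda_k^{\min}(G)$.

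For the upper bound I would first note that $\widetilde G$ is itself the Neumann-decoupling of a single path $P_{L(G)}$ cut at $\#E-1$ interior points; since such decoupling only enlarges the form domain, $\lambda_j^{\min}(\widetilde G)\le\lambda_j^{\min}(P_{L(G)})=4j^2/L(G)$. I would realize a near-minimizer on $P_{L(G)}$ by an \emph{absolutely continuous} probability measure, which is possible by density together with the spectral continuity of Theorem~\ref{thm: spec conv }; after decoupling it charges no cut point, so it descends to an atom-free measure $\widetilde\mu_\varepsilon$ on $\widetilde G$ and, via the gluing projection, to a measure $\mu_\varepsilon\in\cN$ with $\iota_\ast\mu_\varepsilon=\widetilde\mu_\varepsilon$. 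Applying the right-hand interlacing inequality with $j=k+2\#E-\#V$ gives $\lambda_k(H_{\mu_\varepsilon})\le\lambda_j(H^{\widetilde G}_{\widetilde\mu_\varepsilon})\le 4j^2/L(G)+\varepsilon$, and letting $\varepsilon\to0$ yields the upper bound.

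I expect the main obstacle to be the careful bookkeeping of atomic mass at vertices: making $\iota$ an exact isometry requires a deliberate assignment of each vertex atom to a single edge-copy, and the transport of near-optimal measures between $P_{L(G)}$, $\widetilde G$ and $G$ must avoid charging the glue and cut points, which is precisely why the approximation by absolutely continuous measures (and hence the continuity result) enters the upper bound. The remaining ingredients---the codimension count $2\#E-\#V$, the min-max interlacing for finite-codimension subspaces, and the elementary counting estimate---are routine.
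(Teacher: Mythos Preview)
Your approach coincides with the paper's in its overall structure: decouple $G$ into the disjoint union of edges $\widetilde G$, use the codimension-$(2\#E-\#V)$ interlacing between $H^1(G)$ and $H^1(\widetilde G)$, and compare with the path $P_{L(G)}$. The paper likewise restricts to absolutely continuous measures and invokes density plus Theorem~\ref{thm: spec conv } to pass to general $\mu$, so your careful vertex-atom bookkeeping is in the same spirit.

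The one genuine difference is the lower-bound step. The paper glues $\widetilde G$ back into a single path $P_{L(G)}$ and applies a \emph{second} interlacing with codimension $\#E-1$, obtaining $\lambda_k^{\min}(\widetilde G)\ge \lambda_{k-\#E+1}^{\min}(P_{L(G)})=4(k-\#E+1)^2/L(G)$ directly. Your route---bounding the per-edge counting function by $1+\tfrac12\sqrt{c_e\ell(e)\Lambda}$ and summing via Cauchy--Schwarz---is clean and avoids the second surgery, but as written it yields the slightly weaker shift $(k-\#E)^2$ rather than $(k-\#E+1)^2$: the $+1$ from each of the $\#E$ edges forces $\#E+\tfrac12\sqrt{\Lambda L(G)}\le k$, i.e.\ $\Lambda\le 4(k-\#E)^2/L(G)$. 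This is harmless for Weyl's law, but it does not quite recover the lemma as stated; the paper's double-interlacing buys exactly that missing unit.
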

\begin{proof}
	Our strategy is to compare $G$ to a path graph by first ``cutting'' $G$ into intervals and then ''glueing'' them again to a path graph $P_L = [0, L] = [0,L(G)]$.

	More precisely, we cut the graph $G$ into intervals $[0, \ell(e)]$, $e\in E$, and consider their disjoint union $\widetilde G := \bigcup_{e \in E} [0, \ell(e)]$. Let $\mu \in \cN$ be a measure on $G$ of the form $\mu = m dx$ for some function $m >0$. Restricting $\mu$ to each interval edge $e \in E$, we obtain a measure $\widetilde \mu$ on $\widetilde G$.
	
	We compare the associated Laplacians $H_\mu$ and $H_{\tilde \mu}$. By the properties of $\mu$, one can naturally identify $L^2(G, \mu) \cong L^2(\widetilde G, \widetilde\mu)$. Moreover, the operators have form domains $H^1(G, \mu) = H^1(G)$ and $H^1(\widetilde G, \widetilde \mu) = H^1(\widetilde G) = \bigoplus_{e \in E} H^1([0, \ell(e)])$, respectively. We can actually view $H^1(G)$ as a subspace of $H^1(\widetilde G)$, consisting of those functions $f \in H^1(\widetilde G)$ with $f(a) = f(b)$, whenever $a$ and $b$ are endpoints of intervals which corresponded to the same vertex in $G$. Since every vertex $v \in V$ gives rise to $\deg(v) -1$ such conditions, the co-dimension of $H^1(G)$ in $H^1(\widetilde G)$ is precisely $\sum_{v \in V} \deg(v)-1 = 2 \#E - \#V$. The quadratic forms of $H_\mu$ and $H_{\tilde \mu}$ coincide on $H^1(G) \subset H^1(\widetilde G)$.  Thus, the standard interlacing principle yields that
	\[
	\lambda_k (  H_{\widetilde \mu} )  \le \lambda_k (H_\mu) \le \lambda_{k + 2\#E - \#V} (H_{\widetilde \mu}), \qquad k \in \N_{0}.
	\]
	Minimizing over all measures $\mu$ and applying a density argument, we obtain
	\begin{equation} \label{Interlacing1}
		\lambda_k^{\min}(\widetilde G)  \le \lambda_k^{\min}(G) \le \lambda_{k + 2\#E - \#V}^{\min}(\widetilde G), \qquad k \in \N_0.
	\end{equation}
	On the other hand, we can glue the intervals in $\widetilde G$ to a path graph $P_L = [0,L]$ with $\#E$ edges.  Applied to $P_L$, the inequalities~\eqref{Interlacing1} read as
	\begin{equation} \label{Interlacing2}
		\lambda_k^{\min}(\widetilde G)  \le \lambda_k^{\min}( P_L) \le \lambda_{k + \#E -1}^{\min}(\widetilde G), \qquad k \in \N_0.
	\end{equation}
	The claimed estimates then follow from Theorem~\ref{thm:ConfEVPath} (see also Remark\ref{rmk : Scaling property}).
\end{proof}

\begin{thm}[Weyl's law] \label{thm:Weyl'sLaw}
	Let $G$ be a metric graph and let $N^{\min} \colon [0, \infty) \to [0, \infty)$ be the counting function for optimal eigenvalues, defined by
	\[
	N^{\min}(\lambda) := \#  \big  \{k \in \N_0|\, \lambda_k^{\min}(G) \le \lambda \big \}.
	\]
	Then
	\begin{equation} \label{eq:Weyl2}
		\frac{N^{\min}(\lambda)}{\sqrt{\lambda}}  \sim \frac{\sqrt{L(G) }}{2}, \qquad \lambda \to \infty,
	\end{equation}
	and
	\begin{equation} \label{eq:Weyl1}
		\lambda_k^{\min}(G) \sim \frac{4 k^2}{L(G)}, \qquad k \to \infty.
	\end{equation}
\end{thm}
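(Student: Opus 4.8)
The plan is to read off both asymptotic statements directly from the two-sided bounds established in Lemma~\ref{lem:EigenvalueInterlacingConsequence}, since these already encode all of the analytic content; the remaining work is purely elementary. Write $L = L(G)$ and abbreviate the two graph-dependent constants $a := \#E - 1$ and $b := 2\#E - \#V$, so that Lemma~\ref{lem:EigenvalueInterlacingConsequence} reads
\[
\frac{4(k-a)^2}{L} \le \lambda_k^{\min}(G) \le \frac{4(k+b)^2}{L}
\]
for all $k \ge a$ (the lower bound being vacuous for $k < a$, while the upper bound holds for all $k \ge 0$). In particular every $\lambda_k^{\min}(G)$ is finite, so $N^{\min}(\lambda) \to \infty$ as $\lambda \to \infty$.

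First I would prove the eigenvalue asymptotic~\eqref{eq:Weyl1}. Dividing the displayed inequalities by $k^2$ gives
\[
\frac{4}{L}\Big(\frac{k-a}{k}\Big)^2 \le \frac{\lambda_k^{\min}(G)}{k^2} \le \frac{4}{L}\Big(\frac{k+b}{k}\Big)^2,
\]
and since both $\big(\frac{k-a}{k}\big)^2$ and $\big(\frac{k+b}{k}\big)^2$ tend to $1$ as $k \to \infty$, the squeeze theorem yields $\lambda_k^{\min}(G)/k^2 \to 4/L$, which is exactly~\eqref{eq:Weyl1}.

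Next I would deduce the counting-function asymptotic~\eqref{eq:Weyl2} from the same bounds by inverting the quadratic relation; note that monotonicity of $k \mapsto \lambda_k^{\min}(G)$ is not needed. For the upper bound on $N^{\min}(\lambda)$, I use the lower bound on the eigenvalues: if $k \ge a$ and $\lambda_k^{\min}(G) \le \lambda$, then $\frac{4(k-a)^2}{L} \le \lambda$, forcing $k \le a + \tfrac{1}{2}\sqrt{L\lambda}$; counting these indices together with the at most $a$ indices below $a$ gives $N^{\min}(\lambda) \le \tfrac{1}{2}\sqrt{L\lambda} + (a+1)$. For the lower bound, I use the upper bound on the eigenvalues: every $k \ge 0$ with $k + b \le \tfrac{1}{2}\sqrt{L\lambda}$ satisfies $\lambda_k^{\min}(G) \le \frac{4(k+b)^2}{L} \le \lambda$ and is therefore counted, so $N^{\min}(\lambda) \ge \tfrac{1}{2}\sqrt{L\lambda} - (b+1)$. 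Combining these,
\[
\frac{1}{2}\sqrt{L\lambda} - (b+1) \le N^{\min}(\lambda) \le \frac{1}{2}\sqrt{L\lambda} + (a+1),
\]
and dividing by $\sqrt{\lambda}$ and letting $\lambda \to \infty$ gives $N^{\min}(\lambda)/\sqrt{\lambda} \to \sqrt{L}/2$, which is~\eqref{eq:Weyl2}.

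Since the entire difficulty has been absorbed into the interlacing estimate of Lemma~\ref{lem:EigenvalueInterlacingConsequence}, there is no substantial obstacle remaining; the only points requiring a little care are the bookkeeping of the additive constants $a$ and $b$ and the finitely many exceptional small indices, and the observation that~\eqref{eq:Weyl1} and~\eqref{eq:Weyl2} are in fact equivalent reformulations of the single fact that $\lambda_k^{\min}(G) = \tfrac{4}{L}k^2\,(1 + o(1))$.
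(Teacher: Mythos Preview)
Your proof is correct and follows exactly the approach of the paper, which simply states that both asymptotics follow immediately from Lemma~\ref{lem:EigenvalueInterlacingConsequence}. You have merely spelled out in detail the elementary squeeze/inversion argument that the paper leaves implicit.
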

\begin{proof}
	These asymptotics follow immediately from Lemma~\ref{lem:EigenvalueInterlacingConsequence}. 
\end{proof}


\appendix

\section{A Sturm oscillation theorem} \label{sec:SturmOscillationTheorem}
We prove the following Sturm oscillation theorem on the path graph $P = [0,1]$.

\begin{thm}[Sturm oscillation theorem for general measures] \label{Thm: Sturm oscillation}
	Let $\mu \in \cM$ be a measure on the path graph $P=[0,1]$ and let $f$ be an eigenfunction of $H_\mu$ for the eigenvalue $\lambda_k(H_\mu)$. Then $f$ has precisely $k$ zeros and all of them lie in $(0,1)$.
\end{thm}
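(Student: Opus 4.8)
The plan is to treat the eigenvalue equation on $P=[0,1]$ as a measure-differential (Krein string) equation and to run a Sturm-type interlacing argument, combining a nodal/min-max upper bound on the zero count with a Wronskian separation lower bound. First I would record the structure of an eigenfunction $f$ for $\lambda_k(H_\mu)$. By Lemma~\ref{Prop: characterization harmonicity}, $f$ is affine on every $\mu$-null interval, and testing the weak equation $q(f,\varphi)=\lambda_k\int_P f\varphi\,d\mu$ against functions supported near a point shows that $f$ is continuous, $f'$ has bounded variation with $f'(x)=f'(0^+)-\lambda_k\int_{[0,x)}f\,d\mu$, and that the natural boundary conditions $f'(0^+)=-\lambda_k\mu(\{0\})f(0)$ and $f'(1^-)=\lambda_k\mu(\{1\})f(1)$ hold. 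The one technical input I need here is uniqueness for the associated initial value problem: a solution with $f(x_0)=f'(x_0^+)=0$ vanishes identically (a Gronwall estimate for $\sup|f|+\sup|f'|$ on growing intervals). Two consequences follow: every eigenvalue is \emph{simple}, since the boundary condition at $0$ pins down the eigenfunction up to a scalar, so $0=\lambda_0<\lambda_1<\lambda_2<\cdots$ strictly; and no eigenfunction has a double zero, whence $f(0)\neq0$, $f(1)\neq0$ (otherwise $f(0)=f'(0^+)=0$), so every zero is an interior sign change.

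For the upper bound I would use a nodal/min-max argument. Writing the sign changes of $f$ as $0<y_1<\dots<y_p<1$ and setting $y_0:=0$, $y_{p+1}:=1$, the nodal pieces $w_i:=f\,\one_{[y_i,y_{i+1}]}\in H^1(P)$ ($i=0,\dots,p$) vanish at the interior nodes, and testing the equation against $w_i$ yields $q(w_i)=\lambda_k\|w_i\|^2_{L^2(P,\mu)}$. As the $w_i$ have disjoint supports, the Rayleigh quotient $R_\mu$ is constant equal to $\lambda_k$ on $F:=\Span\{w_0,\dots,w_p\}$, of dimension $p+1$. Projecting by $\cP_\mu$ keeps the dimension (the pieces stay $\mu$-orthogonal, being disjointly supported on $\supp\mu$) and does not increase $R_\mu$, so the min-max characterization of $\lambda_k(H_\mu)$ (as near~\eqref{eq:MinLambda1}) gives $\lambda_p\le\max_{\cP_\mu F}R_\mu\le\lambda_k$; strict monotonicity then forces $p\le k$.

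For the lower bound I would establish a Sturm separation theorem via the Wronskian. For eigenfunctions $u,v$ with eigenvalues $\lambda<\nu$, set $\mathrm{Wr}:=uv'-u'v$; using $du'=-\lambda u\,d\mu$, $dv'=-\nu v\,d\mu$ and that $u,v$ are absolutely continuous, the $dx$-terms cancel and $d\mathrm{Wr}=-(\nu-\lambda)uv\,d\mu$. If $u$ has consecutive zeros $x_1<x_2$, then $\mu$ must carry mass on $(x_1,x_2)$ (otherwise $u$ is affine with two zeros, hence $u\equiv0$); assuming $v$ has no zero on $[x_1,x_2]$, the definite sign of $\int_{(x_1,x_2]}uv\,d\mu$ contradicts the values $\mathrm{Wr}(x_1^+)$, $\mathrm{Wr}(x_2^-)$ read off from the simple zeros of $u$. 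The identical computation, now using the boundary conditions at $0$ and $1$, covers the two end intervals $[0,y_1]$ and $[y_{k-1},1]$. Hence the zeros of consecutive eigenfunctions strictly interlace, and since $u_0\equiv\text{const}$ has no zeros, induction shows $u_k$ has at least $k$ zeros: one in each of the $k$ intervals determined by the $k-1$ zeros of $u_{k-1}$ and the two endpoints.

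Combining the two bounds, the eigenfunction for $\lambda_k(H_\mu)$ has exactly $k$ zeros, all interior, which is the assertion (and the strict interlacing sustains the induction). The main obstacle I anticipate is the measure-theoretic bookkeeping when $\mu$ carries atoms (and a singular continuous part): writing the integral form of the equation and the endpoint conditions precisely, justifying the identity $d\mathrm{Wr}=-(\nu-\lambda)uv\,d\mu$ across atoms where $f'$ jumps, and proving the initial-value uniqueness together with the absence of double zeros in this generality. Once these facts are in place, the sign-chasing in both the nodal and the separation arguments is routine.
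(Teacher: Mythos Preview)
Your proposal is correct, and the upper bound via nodal pieces and the min--max principle is essentially the paper's Lemma~\ref{lem: upper zero bound}. The lower bound, however, takes a genuinely different route. The paper does \emph{not} prove a Sturm separation theorem for general measures directly; instead it approximates $\mu$ by measures $\mu_n$ with positive $L^1$-densities, invokes the classical Sturm oscillation and comparison theorems for those (Proposition~\ref{Prop: Sturm comparison for smooth densities}), and then uses the spectral continuity Theorem~\ref{thm: spec conv } to pass to the limit. The delicate part of the paper's argument is ruling out that two or more zeros of the approximating eigenfunctions collapse in the limit, which is handled by an induction together with a concavity argument. Your Wronskian approach bypasses all of this: once the identity $d\mathrm{Wr}=-(\nu-\lambda)uv\,d\mu$ is established (and it does hold, since $u,v$ are continuous so the product rule for a continuous function times a BV function applies cleanly, and the jump of $u'$ at a zero of $u$ vanishes because the atom is multiplied by $u=0$), the interlacing and boundary-interval arguments go through exactly as you outline. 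Your route is more self-contained and avoids both the approximation machinery and the collapse analysis; the paper's route, by contrast, reuses Theorem~\ref{thm: spec conv } and reduces the hard analysis to the smooth case, at the cost of the limiting step. The technical caveats you flag (integral form of the equation, product rule across atoms, IVP uniqueness via Gronwall) are all real but routine in the Krein-string setting, and the paper itself relies on the same IVP uniqueness (its Lemma~\ref{lem: Unique continuation for ODE}) for simplicity and for the absence of double zeros.
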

Since $P$ coincides with the interval $P=[0,1]$, the operator $H_\mu$ is a Sturm--Liouville operator with measure-valued coefficients. The study of such operators has a long tradition~\cite{feller, kk}. In particular, different versions of Sturm oscillation and comparison theorems have been obtained in this setting, see e.g.~\cite{App: no controll at the boundary,  App: Dirichlet-case,App: Comparison results,  App: Volkmer}. On the other hand, we were not able to find the above variant, which is needed in the proof of Theorem~\ref{thm:ConfEVPath}, in the literature.

Consider a bounded interval $(a,b)$ and a locally finite Borel measure $\varrho$ on $(a,b)$. Let $\mathrm{AC}_{\mathrm{loc}}((a,b), \varrho)$ be the space of functions $f$ such that there exists a function $h \in L_{\mathrm{loc}}^1((a,b), \varrho )$ with
\[
f(y) = f(x) + \int_{[x,y)} h \, d\varrho
\]
for all $x<y$. Every function $f \in \mathrm{AC}_{\mathrm{loc}}((a,b), \varrho)$ is left-continuous and we set
\[
\frac{d}{d \varrho} f := h.
\]
We are interested in the differential expression
\[
\tau = - \frac{d}{d\varrho} \frac{d}{dx} f,
\]
which is well-defined for $f$ belonging to the domain
\[
\mathcal{D}_\tau = \big \{f \in \mathrm{AC}_{\mathrm{loc}}((a,b), dx)| \, f' \in \mathrm{AC}_{\mathrm{loc}}((a,b), \varrho) \big \}.
\]
More precisely, the condition $f' \in \mathrm{AC}_{\mathrm{loc}}((a,b), \varrho)$ means that $f'$ has a representative in $ \mathrm{AC}_{\mathrm{loc}}((a,b) ,\varrho)$, which is moreover unique by left-continuity. When referring to $f'$ for some $f \in \mathcal{D}_\tau$, we mean this particular representative.

We need the following result on uniqueness of solutions (e.g.,~\cite[Theorem 3.1]{Sturm-Liouville}).

\begin{lem}\label{lem: Unique continuation for ODE}
	Fix a point $c\in (a,b)$ and let $\alpha, \beta, \lambda \in \R$. Then there exists a unique solution $f \in \mathcal{D}_\tau$ of the equation $\tau f = \lambda f$ satisfying
	\[
	f(c) = \alpha \qquad \text{and} \qquad f'(c) = \beta.
	\]
\end{lem}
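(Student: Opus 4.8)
The plan is to recast the second-order measure equation $\tau f = \lambda f$ as a first-order integral system for the pair $(f, f')$ and then run a Picard-type iteration. Writing $g := f'$, a function $f \in \mathcal{D}_\tau$ solves $\tau f = \lambda f$ with $f(c) = \alpha$, $f'(c) = \beta$ if and only if $f \in \mathrm{AC}_{\mathrm{loc}}((a,b),dx)$ and $g \in \mathrm{AC}_{\mathrm{loc}}((a,b),\varrho)$ satisfy the integral equations
\begin{equation*}
f(y) = \alpha + \int_c^y g(t)\,dt, \qquad g(y) = \beta - \lambda \int_{[c,y)} f(s)\,d\varrho(s),
\end{equation*}
for $y \ge c$, with the analogous equations for $y \le c$. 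I would first verify this equivalence: any continuous $f$ and left-continuous $g$ solving the system automatically lie in $\mathcal{D}_\tau$ and satisfy $\tau f = \lambda f$ with the prescribed Cauchy data, and conversely. Note that the half-open interval $[c,y)$ in the second equation is exactly the convention forcing $g$ to be left-continuous and encoding the jump $g(x_0^+) - g(x_0) = -\lambda f(x_0)\varrho(\{x_0\})$ at an atom $x_0$ of $\varrho$.

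For existence, I would fix a compact $[c,d] \subset (a,b)$ (the case $d < c$ being symmetric) and define successive approximations by $f_0 \equiv \alpha$, $g_0 \equiv \beta$, and
\begin{equation*}
f_{n+1}(y) = \alpha + \int_c^y g_n(t)\,dt, \qquad g_{n+1}(y) = \beta - \lambda\int_{[c,y)} f_n(s)\,d\varrho(s).
\end{equation*}
Introducing the finite dominating measure $\sigma := dx + |\lambda|\,d\varrho$ on $[c,d]$, the successive differences are controlled by iterated integrals over \emph{strict} simplices, which the half-open convention guarantees; disjointness of the $n!$ orderings yields the bound $\|f_{n+1}-f_n\|_{\infty} + \|g_{n+1}-g_n\|_{\infty} \le C\,\sigma([c,d])^n/n!$. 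The factorial forces uniform convergence of $(f_n, g_n)$ to a limit $(f,g)$ solving the system, and hence a solution $f \in \mathcal{D}_\tau$ of $\tau f = \lambda f$ with $f(c) = \alpha$ and $f'(c) = \beta$.

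Uniqueness follows from a Gronwall argument against the same measure $\sigma$. If $f_1, f_2$ are two solutions with identical Cauchy data, then $f := f_1 - f_2$ and $g := g_1 - g_2$ satisfy the homogeneous system with zero initial data, whence $u := |f| + |g|$ obeys $u(y) \le \int_{[c,y)} u\,d\sigma$ after absorbing the atomless Lebesgue contribution; iterating over strict simplices gives $u(y) \le \|u\|_{\infty}\,\sigma([c,y))^n/n! \to 0$, so $f_1 \equiv f_2$.

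The main obstacle is the presence of atoms in $\varrho$, which is precisely the case of interest, since the minimizing measures in Theorem~\ref{thm:ConfEVPath} are finitely supported. Atoms make $g = f'$ genuinely discontinuous, and a naive simplex estimate for the iterated integrals would control only the wrong (diagonal) part of the product measure. The point to get right is that the half-open intervals $[c,y)$ built into the definition of $\mathrm{AC}_{\mathrm{loc}}((a,b),\varrho)$ make all relevant iterated integrals live on strict simplices, so the $n!$ reorderings are disjoint and the factorial bound, and with it both the convergence of the Picard scheme and the Gronwall uniqueness, goes through unchanged.
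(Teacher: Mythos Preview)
Your approach is correct and is essentially the standard proof; the paper, however, does not prove this lemma at all but simply cites it from the literature (Eckhardt--Teschl, \emph{Sturm--Liouville operators with measure-valued coefficients}, Theorem~3.1). Your integral-equation reformulation and Picard iteration with the $\sigma([c,d])^n/n!$ bound is exactly the argument one finds in that reference, so there is nothing to contrast.
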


Consider the path graph $P  = [0,1]$ and let $\mu \in \cM$. The corresponding Laplacian $H_\mu \colon \dom(H_\mu) \to L^2(G, \mu) $ relates to the differential expression $\tau$ for $\varrho := \mu |_{(0,1)}$. More precisely, we have $\dom(H_\mu) \subset \mathcal{D}_\tau$ and
\[
\tau f = -\frac{d}{d\mu} \frac{d}{dx} f  = H_\mu f \quad \text{in } L^1((0,1), \mu)
\]
for every $f \in \dom(H_\mu)$. To see this, let $c < x \in(0,1)$ be two points in which $f$ is differentiable. For $n \in \N$, take $\phi_n \in H^1(G)$ as the test function with $\varphi_n \equiv 1$ on $[c, x-1/n]$, with $\varphi_n \equiv 0$ on $[0, c-1/n] \cup [x,1]$ and which is linear everywhere else on $P=[0,1]$. Then 
\begin{align*}
	\int_{[c,x)} (H_\mu f) d \mu &= \lim_{n \to \infty} \int_{0}^{1} (H_\mu f) \phi_n d\mu = \lim_{n \to \infty} q(f, \phi_n) \\
	&= \lim_{n \to \infty} n \int_{c-1/n}^{c} f'(s)ds - n\int_{x-1/n}^{x} f'(s)ds= f'(c) - f'(x).
\end{align*}

A similar argument shows that the right and left derivatives in the endpoints of $P= [0,1]$ exist for all $f \in \dom(H_\mu)$ and equal
\begin{equation} \label{eq:BoundaryConditionsPathGraphs}
	f_+'(0) = - \mu(\{0\}) (H_\mu f) (0), \qquad  f_-'(1) = \mu(\{1\}) (H_\mu f) (1).
\end{equation}
Here, the respective right hand side is zero, whenever $\mu(\{0\})$ or $\mu(\{1\})=0$.
\begin{rmk}
The above generalizes to an arbitrary metric graph $G$ and measure $\mu \in \cM$. Namely, for $f \in \dom(H_\mu)$, on every interval edge $(0, \ell(e))$, $e \in E$, we have
\[
H_\mu f = -\frac{d}{d\mu} \frac{d}{dx} f \qquad \text{in $L^2( (0, \ell(e)), \mu)$},
\]
where the restriction $f|_{(0,\ell(e))}$ belongs to the respective domain $\mathcal{D}_{\tau_e}$. Moreover,
\[
\sum_{e \sim v} f_e'(v) = - \mu(\{v\}) (H_\mu f)(v), \qquad \text{for all $v \in V$},
\]
where all edges $e \sim v$ are oriented pointing away from $v$.

From this perspective, the Laplacian $H_\mu$ may be viewed as a graph variant of a Krein string operator. Note that such operators have already been studied before on star graphs in~\cite{eckhardt, prt}. Under additional assumptions on the measure $\mu$, our operators also coincide with those for general graphs in~\cite{kkv}.
\end{rmk}

\begin{prop} \label{Pro: Simplicity of eigenvalues on the interval}
	Let $\mu \in \cM$. Then all eigenvalues of $H_\mu$ are simple.
\end{prop}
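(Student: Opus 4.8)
The plan is to prove that every eigenspace is at most one-dimensional via a Wronskian argument adapted to the measure-differential setting. Let $\lambda$ be an eigenvalue of $H_\mu$ and let $f_1,f_2\in\dom(H_\mu)$ be two eigenfunctions. Both solve $\tau f=\lambda f$ on $(0,1)$ for $\varrho:=\mu|_{(0,1)}$, and both obey the boundary relations in \eqref{eq:BoundaryConditionsPathGraphs}. I introduce the Wronskian
\[
W:=f_1 f_2' - f_1' f_2,
\]
where $f_i'$ always denotes the distinguished left-continuous representative coming from $\mathcal{D}_\tau$. The goal is to show $W\equiv 0$ and then to invoke the uniqueness statement of Lemma~\ref{lem: Unique continuation for ODE} to conclude that $f_1$ and $f_2$ are proportional, whence the eigenvalue is simple.

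The key and most delicate step is that $W$ is \emph{constant} on $(0,1)$. Across an atom $a$ of $\mu$ with mass $m_a$, the identity $f_i'(y)-f_i'(x)=-\lambda\int_{[x,y)}f_i\,d\mu$ shows that each derivative jumps by $f_i'(a^+)-f_i'(a^-)=-\lambda f_i(a)m_a$, while $f_i$ itself is continuous; hence the jump of $W$ at $a$ is $f_1(a)(-\lambda f_2(a)m_a)-(-\lambda f_1(a)m_a)f_2(a)=0$. On the complement of the atoms the relation $\frac{d}{dx}f_i'=-\lambda f_i\,\frac{d\mu}{dx}$ gives $W'=0$ almost everywhere; more conceptually, the constancy of $W$ is the Lagrange identity expressing the symmetry of $\tau$, i.e.\ the integration-by-parts relation between $\frac{d}{dx}$ and $\frac{d}{d\mu}$. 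I expect this to be the main obstacle, since it requires careful bookkeeping of atoms and of a possibly singular continuous part of $\mu$; the cleanest route is presumably to cite and adapt the Lagrange/Green identity from the Sturm--Liouville theory with measure coefficients underlying Lemma~\ref{lem: Unique continuation for ODE}.

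To evaluate the constant, I would pass to the left endpoint. Since $\mu$ is finite, the integral representation $f_i'(x)=f_{i,+}'(0)-\lambda\int_{(0,x)}f_i\,d\varrho$ shows that the one-sided limits $f_i(0^+)=f_i(0)$ and $f_{i,+}'(0)$ exist. Substituting the boundary condition $f_{i,+}'(0)=-\mu(\{0\})\lambda f_i(0)$ from \eqref{eq:BoundaryConditionsPathGraphs} gives
\[
W(0^+)=f_1(0)f_{2,+}'(0)-f_{1,+}'(0)f_2(0)=-\mu(\{0\})\lambda f_1(0)f_2(0)+\mu(\{0\})\lambda f_1(0)f_2(0)=0,
\]
so the constant value of $W$ is $0$. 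Finally, fix any interior point $c\in(0,1)$. Since $f_1\not\equiv 0$, its Cauchy data $(f_1(c),f_1'(c))$ is nonzero, for otherwise $f_1\equiv 0$ by Lemma~\ref{lem: Unique continuation for ODE}. The relation $W(c)=0$ then forces $(f_2(c),f_2'(c))=\beta\,(f_1(c),f_1'(c))$ for some scalar $\beta$, so $f_2-\beta f_1$ solves $\tau f=\lambda f$ with vanishing Cauchy data at $c$ and hence vanishes identically by Lemma~\ref{lem: Unique continuation for ODE}. Thus $f_2=\beta f_1$, which proves that the eigenspace of $\lambda$ is one-dimensional and completes the argument.
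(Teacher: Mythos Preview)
Your proof is correct, but it takes a different route from the paper. The paper argues more directly: if the eigenspace had dimension $\ge 2$, one could form a nonzero eigenfunction $f$ with $f(0)=0$; the boundary relation \eqref{eq:BoundaryConditionsPathGraphs} then forces $f_+'(0)=0$, and after extending $f$ and $\mu$ by zero to $(-1,1)$ (so that $0$ becomes an interior point), Lemma~\ref{lem: Unique continuation for ODE} applied at $c=0$ gives $f\equiv 0$, a contradiction.

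Your Wronskian argument is the classical alternative. It is sound: the constancy of $W=f_1f_2'-f_1'f_2$ is exactly the Lagrange identity for $\tau=-\frac{d}{d\mu}\frac{d}{dx}$, which is established in the Eckhardt--Teschl framework you appeal to (and follows, as you sketch, from Stieltjes integration by parts together with the continuity of $f_i$). Your evaluation $W(0^+)=0$ via \eqref{eq:BoundaryConditionsPathGraphs} and the subsequent use of Lemma~\ref{lem: Unique continuation for ODE} at an interior point are both fine. The trade-off is this: the paper's argument avoids having to justify Wronskian constancy in the measure setting (which you correctly flag as the delicate step) at the price of the small extension trick to $(-1,1)$; your argument stays on $[0,1]$ and is the textbook route once the Lagrange identity is in hand, but you should either cite it precisely or write out the Stieltjes integration-by-parts computation rather than leave it as a sketch.
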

\begin{proof}
	It suffices to prove that every solution $f \in \dom(H_\mu)$ of $H_\mu f = \lambda f$ with $f(0) = 0$ vanishes everywhere on $P=[0,1]$. Taking into account~\eqref{eq:BoundaryConditionsPathGraphs}, such a function $f$ necessarily satisfies $f_+'(0) =0$. Since the equation $H_\mu f = \lambda f$ is linear of second order, the initial conditions $f(0)= f_+'(0)= 0$ should already imply that $f \equiv 0$.
	More formally, this can be deduced from Lemma~\ref{lem: Unique continuation for ODE}. Extend $\mu$ and $f$ by zero to a measure $\nu$ and function $g \in \dom(H_\nu)$ on $J =(-1, 1)$. Then $-\frac{d}{d\nu} \frac{d}{d x } g = \lambda g$ on $J$ and $g(0)= g'(0)= 0$, so it follows that $g \equiv 0$ and $f \equiv 0$. 
\end{proof}

\begin{lem} \label{lem: upper zero bound}
	Let $\mu \in \cM$ and let $f \in H^1(P, \mu)$ be an eigenfunction for the $k$-th eigenvalue $\lambda_k(H_\mu)$. Then $f$ has at most $k$ zeros and all its zeros lie in $(0,1)$.
\end{lem}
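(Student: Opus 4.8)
The plan is to combine the uniqueness theory for the underlying one-dimensional problem with a Courant-type nodal-domain argument driven by the min--max principle. First I would rule out zeros at the endpoints. If $f(0)=0$, the boundary relation~\eqref{eq:BoundaryConditionsPathGraphs} gives $f_+'(0)=-\mu(\{0\})(H_\mu f)(0)=-\lambda_k(H_\mu)\mu(\{0\})f(0)=0$, so $f(0)=f_+'(0)=0$; extending $\mu$ and $f$ by zero across $0$ exactly as in the proof of Proposition~\ref{Pro: Simplicity of eigenvalues on the interval} and invoking Lemma~\ref{lem: Unique continuation for ODE} forces $f\equiv0$, a contradiction, and the same reasoning at $x=1$ shows every zero lies in $(0,1)$. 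The identical mechanism shows the zeros are simple: the jump formula yields $f'(c^+)=f'(c)-\lambda_k(H_\mu)\mu(\{c\})f(c)=f'(c)$ at any interior zero $c$, so $f'$ is continuous there, and $f(c)=f'(c)=0$ would again give $f\equiv0$. Hence $f'(c)\neq0$ at each zero, the zeros are isolated, and $f$ changes sign at each. Thus $f$ has finitely many zeros $0<x_1<\dots<x_m<1$, and the nodal set cuts $P$ into exactly $m+1$ open intervals $D_0,\dots,D_m$.

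The heart of the proof is the bound $m\le k$. For $k=0$ the eigenfunction is constant and has no zeros, so assume $k\ge1$ and suppose for contradiction that $m\ge k+1$, giving at least $k+2$ nodal intervals. Setting $f_i:=f\cdot\one_{D_i}\in H^1(P)$, the representation formula~\eqref{eq:RepresentationTheorem} gives, exactly as in the proof of Theorem~\ref{thm:ConfEVPath}, the identity $q(f_i)=\lambda_k(H_\mu)\|f_i\|_{L^2(P,\mu)}^2$. A preliminary observation I would establish is that every $D_i$ carries positive $\mu$-mass: if $\mu(D_i)=0$ then $f$ is harmonic, hence affine, on $D_i$; for an interior interval this together with the two vanishing endpoints forces $f\equiv0$ on $D_i$, and for a boundary interval, say $D_0=[0,x_1)$, the relation~\eqref{eq:BoundaryConditionsPathGraphs} (now with $\mu(\{0\})=0$) forces $f(0)=0$, both impossible. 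Hence $\|f_i\|_{L^2(P,\mu)}>0$ for all $i$.

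To extract the contradiction, note that $\Span(f_0,\dots,f_k)$ is $(k+1)$-dimensional, so it contains a nonzero $g=\sum_{i=0}^k c_i f_i$ that is $L^2(P,\mu)$-orthogonal to the lower eigenfunctions $v_0,\dots,v_{k-1}$. By disjointness of supports, $q(g)=\sum_i c_i^2 q(f_i)=\lambda_k(H_\mu)\sum_i c_i^2\|f_i\|_{L^2(P,\mu)}^2=\lambda_k(H_\mu)\|g\|_{L^2(P,\mu)}^2$, so $R_\mu(g)=\lambda_k(H_\mu)$. Since $\cP_\mu g$ stays orthogonal to the $v_j$ and satisfies $R_\mu(\cP_\mu g)\le R_\mu(g)$, the min--max principle forces $R_\mu(\cP_\mu g)=\lambda_k(H_\mu)$ and makes $\cP_\mu g$ an eigenfunction for $\lambda_k(H_\mu)$. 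By simplicity of the spectrum (Proposition~\ref{Pro: Simplicity of eigenvalues on the interval}), $\cP_\mu g=cf$ with $c\neq0$; but $g\equiv0$ on $D_{k+1}$ while $\cP_\mu g=g$ on $\supp\mu$, so $cf$ vanishes on $\supp\mu\cap D_{k+1}$, a set of positive $\mu$-measure on which $f\neq0$ --- the desired contradiction, whence $m\le k$.

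I expect the main obstacle to be precisely the mismatch between $H^1(P)$ and $H^1(P,\mu)$ in the final step: the test function $g$ built from nodal pieces lives in $H^1(P)$ but need not lie in $H^1(P,\mu)$, so it must be routed through the projection $\cP_\mu$, and turning ``$g$ vanishes on the omitted nodal interval $D_{k+1}$'' into a genuine contradiction requires both the simplicity of the spectrum and the positivity of $\mu$ on each nodal interval established above. The endpoint and simplicity claims, by contrast, should follow routinely from the uniqueness Lemma~\ref{lem: Unique continuation for ODE} and the boundary identity~\eqref{eq:BoundaryConditionsPathGraphs}.
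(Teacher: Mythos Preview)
Your proof is correct and follows essentially the same Courant--type nodal-domain strategy as the paper: cut $f$ along its zeros, observe that each piece has Rayleigh quotient $\lambda_k(H_\mu)$, push through $\cP_\mu$, and contradict simplicity (Proposition~\ref{Pro: Simplicity of eigenvalues on the interval}). The only notable difference is in how the contradiction is extracted: the paper feeds all $l+1\ge k+2$ pieces directly into the min--max principle to get $\lambda_l(H_\mu)\le\lambda_k(H_\mu)$, whereas you use $k+1$ of the pieces to build a competitor orthogonal to $v_0,\dots,v_{k-1}$, identify its projection with $cf$, and contradict via the omitted interval $D_{k+1}$; both routes work, and the paper's is slightly more direct. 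Your argument is actually more careful than the paper's in one respect: you explicitly verify $\mu(D_i)>0$ for each nodal interval, which is needed for the $\mu$-orthogonal pieces to be genuinely nonzero in $L^2(P,\mu)$, a point the paper's proof leaves implicit.
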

\begin{proof}
	We already know that $f(0), f(1) \neq 0$ by the proof of Proposition~\ref{Pro: Simplicity of eigenvalues on the interval}. Suppose that $f$ has at least $l>k$ zeros $0 < x_1 < x_2 < \dots < x_{l} < 1$. Set also $x_0:=0$  and $x_{l+1} :=1$. For $i=0, \dots, l$, let $f_i \in H^1(G)$ be the function obtained by restricting $f$ to $[x_i, x_{i+1}]$ and extending to $[0,1]$ by zero. These $l+1$ functions are all non-trivial, by Lemma~\ref{lem: Unique continuation for ODE}, and $\mu$-orthogonal by construction. The same holds for their projections $\mathcal{P}_\mu(f_i)$, $i=0, \dots, l$. By the Min-Max-principle and Proposition~\ref{Prop: Harmonic extension},
	\[
	\lambda_l \leq  \max_{f \in \Span\{f_0, \dots, f_l\}} \frac{ q(\mathcal{P}_\mu(f)))}{\|\mathcal{P}_\mu(f)\|^2_{L^2(G, \mu)}} \le \max_i \frac{ q(f_i)}{\|f_i\|^2_{L^2(G, \mu)}} =  \max_{i} \frac{\lambda_k\|f_i\|^2_{L^2(G, \mu)}}{\|f_i\|^2_{L^2(G, \mu)}} = \lambda_k.\]
	We obtain that $\lambda_k = \lambda_l$, in contradiction to Proposition~\ref{Pro: Simplicity of eigenvalues on the interval}.	
\end{proof}
If $\mu \in \cM$ has a positive $L^1$-density, i.e.~$\mu = r dx$ for $r \in L^1(0,1)$ with $r>0$, then
\[
\tau = - \frac{1}{r} \frac{d^2}{dx^2}.
\]
Moreover, the Laplacian $H_\mu$ coincides with the standard self-adjoint Sturm--Liouville operator in $L^2((0,1), \mu)$ associated with the Neumann boundary conditions
\[
f'(0) = f'(1)= 0.
\]
We have the following classical version of the Sturm oscillation and comparison theorem (e.g.,~\cite[Theorem 6.1.1 and Theorem 6.1.2]{Bennewitz2020-hu}).

\begin{prop} \label{Prop: Sturm comparison for smooth densities}
	Consider the Laplacian $H_\mu$ for $\mu \in \cM$ with positive $L^1$-density.
	
	\begin{itemize}
		\item [(i)] Let $f$ be an eigenfunction for the $k$-th eigenvalue $\lambda_k(H_\mu)$. Then $f$ has precisely $k$ zeros $x_1 < x_2 < \dots < x_k$ and all of them lie in $(0,1)$.
		\item [(ii)] Additionally, let $g$ be an eigenfunction for an eigenvalue $\lambda_l(H_\mu) > \lambda_k(H_\mu)$. Then $g$ has at least one zero in every interval $(x_i, x_{i+1})$ for $i =0, \dots, k$, where $x_0 := 0$ and $x_{k+1} := 1$ by convention.
	\end{itemize}
\end{prop}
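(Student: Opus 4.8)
The plan is to reduce everything to the one-dimensional Sturm--Liouville picture and run a Wronskian comparison argument, deducing the zero count in~(i) from the comparison statement together with the upper bound already proved in Lemma~\ref{lem: upper zero bound}. Since $\mu = r\,dx$ with $r\in L^1(0,1)$, $r>0$, is atom-free, \eqref{eq:BoundaryConditionsPathGraphs} gives $f'(0)=f'(1)=0$, and every eigenfunction $u$ for an eigenvalue $\lambda$ solves $-u''=\lambda r u$ a.e.\ on $(0,1)$; as $u''=-\lambda r u\in L^1$, we have $u'\in\mathrm{AC}$ and $u\in C^1([0,1])$. Each zero of $u$ is simple: if $u(c)=u'(c)=0$ at an interior point, then $u\equiv0$ by the uniqueness Lemma~\ref{lem: Unique continuation for ODE}, and the same uniqueness together with the Neumann condition forces $u(0),u(1)\neq0$. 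Thus at any zero $c$ of $u$ one has $u'(c)\neq0$.

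The engine is the following comparison lemma. Let $u,v$ be eigenfunctions for eigenvalues $\lambda<\lambda'$ and write $0=z_0<z_1<\dots<z_m<z_{m+1}=1$, where $z_1,\dots,z_m$ are the zeros of $u$ in $(0,1)$; I claim $v$ has at least one zero in each open interval $(z_i,z_{i+1})$. Consider the Wronskian $W:=uv'-u'v\in\mathrm{AC}([0,1])$, for which
\[
W' = uv''-u''v = (\lambda-\lambda')\,r\,uv \qquad \text{a.e. on } (0,1).
\]
Suppose $v$ had no zero in some $(z_i,z_{i+1})$; after adjusting signs we may assume $u>0$ and $v>0$ there, so that $\int_{z_i}^{z_{i+1}} W'\,dx=(\lambda-\lambda')\int_{z_i}^{z_{i+1}} r\,uv\,dx<0$, because $r>0$ a.e.\ and $\lambda<\lambda'$. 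On the other hand, at an interior endpoint $z_j$ one has $u(z_j)=0$, so $W(z_j)=-u'(z_j)v(z_j)$ with $v(z_j)\ge0$ and $u'(z_j)$ of the definite sign dictated by the sign change of $u$; at the Neumann endpoints $z_0=0$ and $z_{m+1}=1$ instead $u'=v'=0$, whence $W(z_0)=W(z_{m+1})=0$. Tracking these signs in each of the three interval types (interior--interior, boundary--interior, interior--boundary) gives $W(z_{i+1})-W(z_i)\ge0$ in every case, contradicting the strict negativity of the integral. The crucial point is that the Neumann conditions make $W$ vanish at the two endpoints, so the boundary intervals $(0,z_1)$ and $(z_m,1)$ are handled on exactly the same footing as the interior ones, and the zeros produced are genuinely interior.

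With this lemma, (i) follows quickly. Let $m_j$ be the number of zeros of an eigenfunction for $\lambda_j(H_\mu)$; the eigenvalues are simple and strictly increasing by Proposition~\ref{Pro: Simplicity of eigenvalues on the interval}, and $m_0=0$ since the ground state is constant. Applying the lemma to consecutive eigenvalues $\lambda_j<\lambda_{j+1}$ shows that the $m_j+1$ open intervals cut out by the zeros of the $\lambda_j$-eigenfunction each contain a (distinct) zero of the $\lambda_{j+1}$-eigenfunction, so $m_{j+1}\ge m_j+1$ and hence $m_j\ge j$ for all $j$. In particular $m_k\ge k$, while Lemma~\ref{lem: upper zero bound} gives $m_k\le k$ and places all zeros in $(0,1)$; thus $f$ has exactly $k$ zeros $x_1<\dots<x_k$ in $(0,1)$. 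Finally, (ii) is just the comparison lemma applied to $u=f$, now known to realize exactly $z_i=x_i$, and $v=g$ for $\lambda_k<\lambda_l$.

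The only genuinely delicate points are the open-interval conclusion at the two boundary intervals and the low regularity. The former is resolved precisely by the Neumann-induced identities $W(0)=W(1)=0$; the latter is harmless because the density is a positive $L^1$-function rather than a singular measure, so $u\in C^1$ and all displayed identities hold in the classical (a.e.)\ sense with $W$ absolutely continuous. I expect the endpoint sign bookkeeping to be the fussiest step, since one must follow the sign of $u'(z_j)$ across each zero and the sign of $v$ at the (possibly coincident) endpoints. As an alternative, the Prüfer transformation $u=\rho\sin\theta$, $u'=\rho\cos\theta$ with $\theta'=\cos^2\theta+\lambda r\sin^2\theta$ encodes all of this in the monotone rotation of $\theta$ through multiples of $\pi$, and via monotonicity of $\theta(1;\lambda)$ in $\lambda$ even yields~(i) directly; I would mention it but carry out the Wronskian version, which is the most self-contained given the results already at hand.
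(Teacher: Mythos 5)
Your proof is correct, but it takes a genuinely different route from the paper for a simple reason: the paper does not prove this proposition at all. It is quoted as a classical fact, with a pointer to \cite[Theorem 6.1.1 and Theorem 6.1.2]{Bennewitz2020-hu}. Your argument supplies a self-contained replacement: the Wronskian identity $W'=(\lambda-\lambda')\,r\,uv$ together with the Neumann-induced equalities $W(0)=W(1)=0$ gives the comparison statement (ii) directly, with the two boundary intervals handled exactly like the interior ones, and the exact zero count in (i) then follows from the induction $m_{j+1}\ge m_j+1$ (so $m_k\ge k$) combined with the upper bound $m_k\le k$ from Lemma~\ref{lem: upper zero bound}. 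This is an economical design: you reuse the paper's own general-measure lemma to cap the zero count instead of running a Pr\"ufer-angle argument, and your regularity bookkeeping ($u\in C^1$, $u'\in\mathrm{AC}$, hence $W\in\mathrm{AC}$ and the fundamental theorem of calculus applies) is exactly what is needed for a density that is merely positive and $L^1$, so your proof covers the stated generality, which is slightly beyond what some textbook formulations assume. What the paper's approach buys is brevity and reliance on a standard reference; what yours buys is a closed, internally consistent appendix that depends only on results already established earlier in the paper. One small point to patch: Lemma~\ref{lem: Unique continuation for ODE} is stated for an interior point $c\in(a,b)$, so to deduce $u(0),u(1)\neq 0$ from the Neumann condition you should either invoke the zero-extension trick used in the paper's proof of Proposition~\ref{Pro: Simplicity of eigenvalues on the interval}, or simply cite that proposition and Lemma~\ref{lem: upper zero bound}, which already record this fact; as written, your appeal to ``the same uniqueness'' at the endpoints skips this one-line extension.
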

Using the above ingredients, we can prove Theorem~\ref{Thm: Sturm oscillation}.

\begin{proof}[Proof of Theorem~\ref{Thm: Sturm oscillation}] We proceed by induction over $k$. The case $k=0$ is clear. Fix eigenfunctions $f_0, \dots, f_k$ for the first $k+1$ eigenvalues $\lambda_0, \dots, \lambda_k$ of $H_\mu$ and suppose that the claim holds for $f_0, \dots, f_{k-1}$.  Choose a sequence of measures $\mu_n \in \cM$, $n \in \N$, with positive $L^1$-densities converging to $\mu$. Since all eigenvalues are simple by Proposition~\ref{Pro: Simplicity of eigenvalues on the interval}, applying Theorem~\ref{thm: spec conv }, we find eigenfunctions $f_{n,0}, \dots, f_{n,k}$ of $H_{\mu_n}$ which converge uniformly to $f_0, \dots, f_k$ as $n \to \infty$. By Proposition~\ref{Prop: Sturm comparison for smooth densities}, each eigenfunction $f_{n,l}$ has precisely $l$ zeros $x_1(f_{n,l}) < x_2(f_{n,l}) < \dots < x_l(f_{n,l})$. We may suppose that each zero $x_j(f_{n,l})$ converges to a point $ x_j(f_l) := \lim_{n \to \infty} x_j(f_{n,l})$. Since $f_{n, l} \to f_l$ uniformly, each limit $x_k(f_l)$ is a zero of $f_l$ and $x_1(f_l) \le x_2(f_l) \le \dots \le x_l(f_l)$. Moreover, $x_j(f_l) \neq 0, 1$ by Lemma~\ref{lem: upper zero bound}.
	
	It suffices to prove that the limiting zeros $x_j(f_k)$ for $f_k$ are all different, that is, $x_j(f_k) < x_{j+1}(f_k)$ for all $j=1, \dots, k-1$. The result then follows by Lemma~\ref{lem: upper zero bound}.
	
	We first show that three consecutive zeros cannot collapse, that is, we cannot have $x_j(f_k) = x_{j+1}(f_k) =  x_{j+2}(f_k)$. In such a case, using the interlacing property in Proposition~\ref{Prop: Sturm comparison for smooth densities} for $f_{n,k}$ and $f_{n, k-1}$, we would have two coinciding limiting zeros $x_i(f_{k-1}) = x_{i+1}(f_{k-1})$ of $f_{k-1}$. Applying induction, this is impossible.
	
	It remains to outrule that $x_{j-1}(f_k)< x_j(f_k) = x_{j+1}(f_k) < x_{j+2}(f_k)$ for some $j=1, \dots, k$ (with convention $x_0(f_k) := 0$ and $x_{k+1}(f_k) :=1$). Since in this case the nodal domain $(x_j(f_{n,k}), x_{j+1}(f_{n,k}))$ of $f_{n,k}$ has collapsed onto $x_j(f_k) = x_{j+1}(f_k)$, the eigenfunction $f_k$ does not change sign on $I_j := [x_{j-1}(f_k), x_{j+2}(f_k)]$. Supposing that $f_k \ge 0$ on $I_j$, the differential equation $H_\mu f_k = \lambda_k f_k$ implies that $f_k'$ is monotonically decreasing and thus $f_k$ is concave on $I_j$. Using that $f_k(x_j) = 0$, we obtain that then $f_k \equiv 0$ on $I_j$, which implies that $f_k \equiv 0$ on $[0,1]$ by Lemma~\ref{lem: Unique continuation for ODE}. This contradiction finishes the proof.
\end{proof}

\ack{We are grateful to Omid Amini, Aleksey Kostenko and Delio Mugnolo for useful discussions and hints with respect to the literature, and to Jonathan Eckhardt for his help concerning references on Sturm oscillation theorems for Krein strings. We would like to thank Daniel Grieser for the enriching exchange on the physical background of the operators considered.}

\end{document}